\crefname{hypothesis}{Hypothesis}{Hypotheses}
\title{Efficient Search of First-Order Nash Equilibria \\ 
	in Nonconvex-Concave Smooth Min-Max Problems 
	%\thanks{Submitted to the editors May 4, 2020.}
	}
\author{Dmitrii M.~Ostrovskii\thanks{Viterbi School of Engineering, University of Southern California, Los Angeles, CA 90089, USA.
	Email: \email{dostrovs@usc.edu}, \email{lowya@usc.edu}, \email{razaviya@usc.edu}.}
\and  Andrew Lowy\footnotemark[1]
\and Meisam Razaviyayn\footnotemark[1]}
\DeclareMathOperator*{\Argmin}{Argmin}
\DeclareMathOperator*{\Argmax}{Argmax}
\DeclareMathOperator*{\argmin}{argmin}
\DeclareMathOperator*{\argmax}{argmax}
\newcommand{\Dx}{\textsf{\textup S}_{X}}
\newcommand{\Dy}{\textsf{\textup S}_{Y}}
\newcommand{\Dz}{\textsf{\textup S}_{Z}}
\newcommand{\Dxw}{\textsf{\textup S}_{X,\omega}}
\newcommand{\Dzw}{\textsf{\textup S}_{Z,\omega}}
\newcommand{\Exw}{\textsf{\textup W}_{X,\omega}}
\newcommand{\Ezw}{\textsf{\textup W}_{Z,\omega}}
\newcommand{\Ex}{\textsf{\textup W}_{X}}
\newcommand{\Ey}{\textsf{\textup W}_{Y}}
\newcommand{\Ez}{\textsf{\textup W}_{Z}}
\newcommand{\hEy}{\wh{\textsf{\textup W}}_{\y}}
\newcommand{\hDy}{\wh{\textsf{\textup S}}_{\y}}
\newcommand{\envalias}[2]{%
  \expandafter\let\csname#1\expandafter\endcsname\csname#2\endcsname
  \expandafter\let\csname end#1\expandafter\endcsname\csname end#2\endcsname
}
\newcommand{\x}{{\textsf{\textup x}}}
\newcommand{\y}{{\textsf{\textup y}}}
\newcommand{\XX}{\mathcal{X}}
\newcommand{\YY}{\mathcal{Y}}
\newcommand{\ZZ}{\mathcal{Z}}
\newcommand{\gx}{\nabla_\x}
\newcommand{\gy}{\nabla_\y}
\newcommand{\py}{\partial_\y}
\newcommand{\Lxx}{L_{\x\x}}
\newcommand{\Lyy}{L_{\y\y}}
\newcommand{\Lxy}{L_{\x\y}}
\newcommand{\bx}{\wt{x}}
\newcommand{\by}{\bar{y}}
\newcommand{\ex}{\veps_\x}
\newcommand{\ey}{\veps_\y}
\newcommand{\bey}{\ey}
\newcommand{\prox}{\textup{prox}}
\newcommand{\gam}{\gamma}
\newcommand{\lam}{\lambda}
\newcommand{\Rx}{R_{\x}}
\newcommand{\Ry}{R_{\y}}
\newcommand{\FGM}{$\textsf{FGM}$}
\newcommand{\FGMR}{$\textsf{RestartFGM}$}
\newcommand{\AppGrad}{$\textsf{SolveRegDual}$}
\newcommand{\proofstep}[1]{$\boldsymbol{{#1}^o}$}
\newcommand{\bF}{F^{\reg}}
\newcommand{\reg}{\textup{reg}}
\newcommand{\vphi}{\varphi}
\newcommand{\half}{\frac{1}{2}}
\newcommand{\FullData}{(\ex,\ey,\Lxx,\Lxy,\Lyy,\Ry,\Gap)}
\newcommand{\eps}{\epsilon}
\newcommand{\Gap}{\Delta}
\newcommand{\Tx}{T_\x}
\newcommand{\Ty}{T_\y}
\newcommand{\bTy}{\overline{T}_\y}
\newcommand{\bTx}{\overline{T}_\x}
\newcommand{\Sy}{S_\y}
\newcommand{\lamy}{\lam_{\y}}
\newcommand{\gamx}{\gam_\x}
\newcommand{\gamy}{\gam_\y}
\newcommand{\To}{T^{o}}
\newcommand{\So}{S^{o}}
\renewcommand{\le}{\leqslant}
\renewcommand{\ge}{\geqslant}
\newcommand{\proj}{\Pi}
\newcommand{\bvphi}{\vphi^{\reg}}
\algrenewcommand\textproc{\textsf}
\newcommand{\nn}{\notag\\}
\newcommand{\ba}{\begin{array}{c}}
\newcommand{\bal}{\begin{array}{l}}
\newcommand{\ea}{\end{array}}
\newcommand{\R}{\mathds{R}}
\newcommand{\bit}{\begin{itemize}}
\newcommand{\eit}{\end{itemize}}
\newcommand{\bvec}{\left(\!\!\!\begin{array}{c} }
\newcommand{\evec}{\end{array}\!\!\!\right)}
\newcommand{\wt}{\widetilde}
\newcommand{\barr}{\begin{array}}
\newcommand{\earr}{\end{array}}
\newcommand{\odima}[1]{#1}
\newcommand{\wh}{\widehat}
\newcommand{\veps}{\varepsilon}
\newcommand{\lang}{\left\langle}
\newcommand{\rang}{\right\rangle}
\newcommand{\dgf}{d.-g.~f.}
\crefname{algorithm}{Algorithm}{Algorithms}
\crefname{assumption}{Assumption}{Assumptions}
\crefname{equation}{}{}
\crefname{figure}{Fig.}{Figs.}
\crefname{table}{Table}{Tables}
\crefname{section}{Section}{Sections}
\crefname{subsection}{Section}{Sections}
\crefname{theorem}{Theorem}{Theorems}
\crefname{lemma}{Lemma}{Lemmmas}
\crefname{proposition}{Proposition}{Propositions}
\crefname{definition}{Definition}{Definitions}
\crefname{corollary}{Corollary}{Corollaries}
\crefname{remark}{Remark}{Remarks}
\crefname{example}{Example}{Examples}
\crefname{appendix}{Appendix}{Appendices}
\begin{document}
\maketitle

\begin{abstract}
We propose an efficient algorithm for finding first-order Nash equilibria in min-max problems of the form $\textstyle\min_{x \in X}\max_{y\in Y} F(x,y)$, where the objective function is smooth in both variables and concave with respect to~$y$; the sets~$X$ and~$Y$ are convex and ``projection-friendly,'' and~$Y$ is compact.
Our goal is to find an $(\ex,\ey)$-first-order Nash equilibrium with respect to a stationarity criterion that is stronger than the commonly used proximal gradient norm. 
The proposed approach is fairly simple: we perform approximate proximal-point iterations on the primal function, with inexact oracle provided by Nesterov's algorithm run on the regularized function~$F(x_t,\cdot)$, 
%with~$O(\ey)$ regularization term,~
$x_t$ being the current primal iterate.
The resulting iteration complexity is~$O({\ex}^{-2} \, {\ey}^{-1/2})$ up to a logarithmic factor. 
As a byproduct, the choice~$\ey = O(\ex^2)$ allows for the~$O({\ex}^{-3})$ complexity of finding an~$\ex$-stationary point for the standard Moreau envelope of the primal function. 
%\odima{notably, this result relies on using the stronger stationarity notion.}
Moreover, when the objective is strongly concave with respect to~$y$, the complexity estimate for our algorithm improves to~$O({\ex}^{-2}{\kappa_\y}^{1/2})$ up to a logarithmic factor, where~$\kappa_\y$ is the condition number \odima{appropriately adjusted for coupling}.
In both scenarios, the complexity estimates are the best known so far, and are only known for the (weaker) proximal gradient norm criterion.
\odima{Meanwhile, our approach is ``user-friendly'':
(i) the algorithm is built upon running a variant of Nesterov's accelerated algorithm as subroutine and avoids extragradient steps; 
(ii) the convergence analysis recycles the well-known results on accelerated methods with inexact oracle.}
 Finally, we extend the approach to non-Euclidean proximal geometries. 
\end{abstract}
% REQUIRED
\begin{keywords}
first-order Nash equilibria, stationary points, nonconvex min-max problems
\end{keywords}
% REQUIRED
\begin{AMS}
90C06, 90C25, 90C26, 91A99
\end{AMS}

\section{Introduction}
\label{sec:problem:setup}
In recent years, min-max problems have received significant attention across the optimization and machine learning communities due to their applications in  training generative adversarial networks (GANs) \cite{goodfellow2014GAN}, training machine learning models that are robust to adversarial attacks \cite{madry2019adversarial}, reinforcement learning \cite{dai2018RL}, fair statistical inference \cite{baharlouei2020fair}, and distributed non-convex optimization \cite{lu2019block}, to name a few. These applications involve solving optimization problems in the general form
 \begin{eq}
\label{opt:min-max}
\min_{x \in X} \max_{y \in Y} F(x,y), 
\end{eq}
where~$F$ is a smooth objective function and~$X,Y$ is a pair of convex sets \odima{in the corresponding Euclidean spaces~$\XX, \YY$.}
When the objective is convex in $x$ and concave in $y$, ~\cref{opt:min-max} is well-studied. In this case, the corresponding variational inequality is monotone, and there is a number of efficient algorithms known for solving it, even at the optimal rate (see, e.g., \cite{nemirovski2004prox}, \cite{nesterov2005smooth}, \cite{ouyang}). However, many of the applications discussed above involve an objective $F$ that is nonconvex in $x$ and not necessarily concave in~$y$, which makes the problem much harder to solve.
% \odima{Why would we suddenly talk about Nash equillibria here?}
In fact, even Nash equilibria are not guaranteed to exist in~\cref{opt:min-max} in this general nonconvex-nonconcave setting.

%As a first step towards better understanding nonconvex min-max problems, 
In this work, we study~\eqref{opt:min-max} under the assumption that $F(x,y)$ is concave in $y$ but
do not assume convexity $x$.  To the best of our knowledge,~\cite{nouiehed2019solving} was the first work providing non-asymptotic convergence rates for  nonconvex-concave problems without assuming special structure of the objective function. %Their approach is to apply accelerated first-order gradient ascent to a properly regularized function to update $y$; and then use the resulting $y$ to perform one step of gradient descent with respect to $x$. 
They use the notion of $\varepsilon$-first order Nash equilibrium (FNE) to measure the rate of convergence of their algorithm. This notion looks at the min-max problem as a two-player zero-sum game and uses the first-order optimality condition with respect to each variable as the optimality measure. Using this notion, they showed that their algorithm finds an~$\varepsilon$-first-order Nash equilibrium in~$O({\varepsilon}^{-3.5})$ gradient evaluations. 
In this work, we use a similar optimality notion to the one in \cite{nouiehed2019solving}; however, in order to  measure the optimality w.r.t. the $x,y$ variables separately, we generalize their notion to $(\ex,\ey)$-first order stationarity. Therefore, by setting $\ex= \ey = \varepsilon$, we obtain the optimality measure used in \cite{nouiehed2019solving}. Using the defined first order stationarity measure, we propose an algorithm that can find $(\ex,\ey)$-first order stationary point in $O({\ex}^{-2} {\ey}^{-1/2})$ gradient evaluations.

\odima{
Another way to measure the convergence rate of an algorithm for solving \eqref{opt:min-max} is to define the primal function~$\vphi(x) = \max_{y\in Y} F(x,y)$, and measure the first-order optimality in terms of the nonconvex problem $\min_{x \in X} \vphi(x)$. 
In this context, a commonly used inaccuracy measure for a candidate solution~$\wh x$ is the gradient norm of the standard Moreau envelope of the primal function (see Section~\ref{sec:moreau} for more details).
Using this viewpoint, subtle analyses have been provided in~\cite{thekumparampil2019efficient,kong2019accelerated}, and more recently, in the concurrent work~\cite{lin2020near} whose preprint was announced a few days prior to ours.
More recently, a similar approach has been used in~\cite{zhao2020primal}.
The underlying idea in all these works, as well as in ours, is to obtain the next iterate~$(x_{t+1}, y_{t+1})$ by approximately solving a strongly-convex-concave saddle-point problem
\begin{eq}
\label{eq:minmax-conceptual-updates-intro}
\min_{x \in X} \max_{y \in Y} [F(x,y) + \Lxx \|x-x_{t}\|^2],
\end{eq}
where~$\Lxx$ is the uniform over~$y \in Y$ bound on the Lipschitz constant of~$\gx F(\cdot,y)$. 
%~\cref{eq:minmax-conceptual-updates} is present in all these works, 
Yet, there are notable differences between all these works, which we shall now discuss.
}

\odima{
\subsection{Related work}
The work~\cite{thekumparampil2019efficient} focuses on the problem of finding an~$\ex$-stationary point of the Moreau envelope~$\vphi_{2\Lxx}(x) := \min_{x' \in X} [\vphi(x') + \Lxx \|x'-x\|^2]$.
To achieve this goal, they solve~\cref{eq:minmax-conceptual-updates-intro} up to accuracy~$\eps$ {in objective value}. 
The resulting scheme produces a point~$\wh x$ satisfying~$\|\vphi_{2\Lxx}(\wh x)\| \le \ex$ in~$O(\ex^3)$ oracle calls, provided that one takes~$\eps = O(\ex^2)$. 
However, they only handle the case~$X = \XX$ and do not provide an algorithm to reach an~$(\ex, \ey)$-FNE for general~$(\ex,\ey)$; modifying their scheme correspondingly might be challenging because of~$Y \ne \YY$. 
(Our discussions in~\cref{sec:moreau} shed some light on such intricacies).
Moreover, their proposed algorithm is way less transparent than ours: it comprises an extragradient-type scheme {\em as well as} Nesterov's acceleration, whereas our approach is based solely on the analysis of Fast Gradient Method (FGM) a version of Nesterov's accelerated algorithm, and uses the readily available results for inexact-oracle FGM due to~\cite{devolder2014first}. 

The approach in the concurrent work~\cite{lin2020near} more closely resembles ours. 
In particular, their scheme also avoids using an extragradient-type subroutine, and produces an~$(\ex,\ey)$-FNE in~$O(\ex{}^{-2}\ey{}^{-1/2})$ oracle calls; setting~$\ey = O(\ex^2)$ then allows to recover the~$O(\ex^{-3})$ result for the Moreau envelope. 
However, they use the definition of~$(\ex,\ey)$-FNE based on the proximal gradient norm, or the {\em weak criterion} in our terminology in~\cref{sec:setup} (cf.~\cref{eq:proximal-gradient}), whereas our scheme produces an~$(\ex,\ey)$-FNE with respect to the so-called {\em strong criterion} (cf.~\cref{def:our-measure}). As we discuss in~\cref{rem:criteria-comparison}, the latter task is more challenging: an~$(\ex,\ey)$-FNE with respect to the strong criterion is also such with respect to the weak criterion (hence the names); meanwhile, a guarantee on the weak criterion does not imply {\em any guarantee} on the strong one in the presense of constraints. 
Moreover, the~$\ey = O(\ex^2)$ reduction for the Moreau envelope in~\cite{lin2020near} does not allow for~$X \ne \XX$ (same as in~\cite{thekumparampil2019efficient}). 
In addition, this reduction relies on the result~\cite[Prop.~4.12]{lin2019gradient}; as we discuss in~\cref{sec:moreau}, this result seems to be invalid unless~$Y = \YY$, which is irrelevant in the context of~\cref{opt:min-max}. 
We rectify both these issues in~\cref{sec:moreau} through the delicate use of the strong criterion.
It should be noted, however, that the authors of~\cite{lin2020near} focus on the convex-concave scenario which we do not address here.
 
The work~\cite{kong2019accelerated} also establishes an~$O(\ex{}^{-2}\ey{}^{-1/2})$ complexity to find an~$(\ex,\ey)$-first-order stationary point; however, to the best of our understanding, their stationarity criterion is not directly comparable to ours nor to the one in~\cite{lin2020near}.
% \footnote{\cite[]{lin2020near} claim that their criterion is equivalent to the one used in asd}
% based on the directional derivatives of~$F(x,y)$, and is even weaker than the proximal gradient norm; thus, the same critique as for~\cite{lin2020near} applies. 
Setting~$\ey = O(\ex^2)$, the authors of~\cite{kong2019accelerated} obtain~$O(\ex^{-3})$ complexity result for the criterion similar to the gradient norm of the Moreau envelope, but slightly weaker (as follows by comparing~\cite[Eq.~(2)]{kong2019accelerated} with the second claim of our~\cref{prop:moreau-to-primal}).
% but again based on the directional derivatives and correspondingly weaker 
% (Their criterion is slightly weaker, a point~$\wh x$ ).
However, they assume direct access to the gradient of the smooth function~$\max_{y \in Y}[F(x,y) - \lam\|y\|^2]$, which is unrealistic (e.g., we solve a similar task via an FGM subroutine). 
This assumption has been removed in the recent work~\cite{zhao2020primal} (that appeared some time after our work). 
Moreover, while~\cite{zhao2020primal} only focuses on the primal accuracy measure (in the same sense as~\cite{kong2019accelerated}), 
they address the general Bregman geometries (which we also do, see~\cref{sec:non-euclidean}). 
However,~\cite{zhao2020primal} do not address the general~$(\ex,\ey)$-stationarity notion.

Finally, let us discuss the precursor work~\cite{nouiehed2019solving}. 
In our terminlogy,~\cite{nouiehed2019solving} shows an~$O(\ex{}^{-2}\ey{}^{-3/2})$ complexity estimate to find an $(\ex, \ey)$-FNE, using a notion of stationarity weaker than the one in~\cite{lin2020near} (and thus {\em a fortiori} weaker than the one we use in the present work).
% to the one in~\cite{kong2019accelerated}, but weaker.
% based on the directional derivatives (similarly to~\cite{kong2019accelerated}). 
The extra~$\ey^{-1}$ complexity factor compared to our result comes from using a more naive algorithmic approach: instead of forming an iterate sequence by solving~\cref{eq:minmax-conceptual-updates-intro}, they proceed by running projected gradient descent on the smoothed primal function~$\max_{y \in Y} [F(x,y) - \lamy \|y\|^2].$ 
Taking~$\lam = \ey/\Ry$ ensures that~$\gy [F(x,y) - \lamy \| y \|^2] \approx \gy F(x,y)$ up to~$O(\ey)$ error, which guarantees that~$(\ex,\ey)$-FNE for the new problem is still valid for the initial one. 
%The extra~$O(\veps^{-1})$ complexity factor comes 
However, gradient descent suffers from a poor smoothness of the smoothed primal function, whose gradient is only Lipschitz with modulus~$\Lxx + O(\Lxy^2/\lamy) = O(\ey^{-1})$,
which results in the final iteration complexity estimate~$O(\ex{}^{-2} \ey{}^{-3/2})$.
}

%-- namely, they produce a point~$\wh x$ such that~$\min_{\xi \in \partial \vphi(x)} \|\xi\| \le \ex$ for some~$x$ in the~$O(\ex)$-neighborhood of~$\wh x$ in~$X$. 

\begin{comment}
The algorithms proposed in these works are less transparent than ours. In particular,~\cite{thekumparampil2019efficient} runs a mirror-prox type subroutine to approximate the proximal-point step (while also using an FGM-type subroutine), and none of these works uses the readily available technical results such as~\cite{devolder2014first} on inexact-oracle FGM, \odima{nor the idea of implementing the proximal-point operator via FGM (see, e.g.,~\cite{lin2015universal,paquette2018catalyst}).}
 In contrast, our work makes use of the available results to obtain the desired iteration complexity, which results in a more direct analysis and a conceptually simpler algorithm. Moreover, as we explain later on in Remark~\ref{rem:criteria-comparison}, our optimality measure is stronger than the commonly used measures in the previous work. 
\odima{This circumstance allow to extend the~$O(\ex{}^{-3})$} complexity result for the Moreau envelope to the case of 
\end{comment}

\section{Problem formulation and preview of the main result}
\label{sec:setup}
We study the min-max problem \cref{opt:min-max} in the setting where~$X,Y$ are convex and ``projection-friendly'' sets with non-empty interior in the corresponding Euclidean spaces~$\XX, \YY$; moreover,~$Y$ is contained in a Euclidean ball with radius~$\Ry < \infty$. %in the Euclidean space~$\YY$, and both~$X$ and~$Y$ are ``projection-friendly''. 
The function~$F: X \times Y \to \R$ is concave  in~$y$ for all~$x \in X$, and has Lipschitz gradient, namely, 
the inequalities %uniformly over~$x,x' \in \R^{\dx}$ and~$y,y' \in \R^{\dy}$ it holds
%\[
\begin{eq}
\label{ass:smoothness}
\begin{ald}
%&\begin{aligned}
\|\gx F(x',y) - \gx F(x,y) \| &\le \Lxx \|x' - x\|, \quad
%\quad \|\gy F(x',y) - \gy F(x,y) \|    &\le \Lxy \|x' - x\|,\\
 \|\gy F(x,y') - \gy F(x,y) \| \le \Lyy \|y' - y\|, \\
% \end{aligned}\\
\|\gx F(x,y') - \gx F(x,y) \| &\le \Lxy \|y' - y\|, \quad %\quad \text{and} \quad 
\|\gy F(x',y) - \gy F(x,y) \|  \le \Lxy \|x' - x\|
\end{ald}
\end{eq}
%\]
hold uniformly over~$x, x' \in X$ and~$y,y' \in Y$ with Lipschitz constants~$\Lxx,\Lyy,\Lxy$. 
Here and in what follows,~$\|\cdot\|$ and~$\lang \cdot, \cdot \rang$ denote the standard Euclidean norm and inner product (regardless of the space), and~$[\gx F(x,y),\gy F(x,y)]$ are the components of the full gradient~$\nabla F(x,y)$. 
%= [\gx F(x,y), \gy F(x,y)]$ in~$x$ (primal variable) and~$y$ (dual variable). 
%Rather than solving problem~\eqref{sec:problem:setup} itself%, 
Instead of seeking an exact solution to \eqref{opt:min-max}, we focus on the more feasible task of finding an \textit{approximate first-order Nash equilibrium}.
%To define our notion of stationarity (which is very natural but somewhat non-standard), let us introduce the notion 
\begin{definition}
\label{def:stationarity}
A point~$(\wh x, \wh y) \in X \times Y$ is called {\em $(\ex,\ey)$-approximate first-order Nash equilibrium ($(\ex,\ey)$-FNE)} in the problem~\eqref{opt:min-max}
if the following holds:
\begin{eq}
\label{eq:stationarity}
\begin{ald}
% \Lxx \left\|\wh x - \proj_{X}\left(\wh x - \frac{1}{\Lxx} \gx F(\wh x, \wh y) \right) \right\| &\le \ex, \\ 
%\|\gx F(\wh x, \wh y)\| \le \ex, \\
\Dx(\wh x, \gx F(\wh x, \wh y), \Lxx) \le \ex \quad \text{and} \quad
\Dy(\wh y, -\gy F(\wh x, \wh y), \Lyy) \le \ey,
%\Lyy \left\|\wh y - \proj_{Y}\left(\wh y + \frac{1}{\Lyy} \gy F(\wh x, \wh y) \right) \right\| &\le \ey,
\end{ald}
\end{eq}
where the inaccuracy measure~$\Dz$, with~$Z$ being a convex subset of a Euclidean space~$\ZZ$, is defined on triples~$z,\zeta \in Z$,
% \footnote{In what follows, we use~$\ZZ^*$ is the dual Euclidean space to~$\ZZ$ (and similarly for~$\XX^*, \YY^*$). 
% This allows to preserve notation in the non-Euclidean setup (see~\ref{sec:non-euclidean}).}
$L \ge 0$ as follows:
\begin{align}
\label{def:our-measure}
\Dz^2(z, \zeta, L) := 2L \max_{z' \in Z} \left[-\lang \zeta, z' - z \rang  - \tfrac{L}{2}\|z'-z\|^2 \right].
%&\left[ =  \|g\|^2 - L^2 \left\| y - \tfrac{1}{L} g - \Pi_Y\left(y-\tfrac{1}{L} g \right) \right\|^2 \right].\notag
\end{align}
%~$\proj_Y(\cdot)$ is the operator of Euclidean projection onto the set~$Y$. 
\end{definition}
% Note that the first (resp., second) inequality in~\cref{eq:stationarity} bounds the norm of the proximal gradient (see, e.g.,~\cite{nesterov2013introductory}) corresponding to the projected gradient descent (resp.,~ascent) step from~$\wh x$ (resp.,~$\wh y$) with natural stepsize~$1/\Lxx$ (resp.,~$1/\Lyy$). As such, it reduces to~$\|\gx F(\wh x,\wh y)\| \le \ex$ (resp.,~$\|\gy F(\wh x,\wh y)\| \le \ey$) if the updated point remains in the feasible set.
\begin{rem}
\label{rem:criteria-comparison}
A more common stationarity measure in the context of constrained minimization of a convex function~$f: Z \to \R$ is the norm of the {\em proximal gradient}, that is~$\Ez(\wh z, \nabla f(\wh z), L) := L \left\| \wh z - \proj_{Z}\left[\wh z - \tfrac{1}{L} \nabla f(\wh z) \right] \right\|,$ where~$\Pi_Z(\cdot)$ is the operator of Euclidean projection onto~$Z$ (see~\cite{nesterov2013introductory}), and we define the functional
\begin{eq}
\label{eq:proximal-gradient}
\Ez(z, \zeta, L) := L \left\| z - \proj_{Z}\left[z - \tfrac{1}{L} \zeta \right] \right\|
\end{eq}
for convenience.
% where~$\Pi_Z(\cdot)$ is the operator of Euclidean projection onto~$Z$ 
In the unconstrained case, both measures reduce to~$\|\nabla f(\wh z) \|$. % as can be seen by rewriting~\cref{def:our-measure}:
%\begin{eq}
%\label{eq:our-measure-geometric}
%\Dz^2(\wh z, \nabla f(\wh z), L) = \|\nabla f( \wh z)\|^2 - L^2 \left\| \wh z - \tfrac{1}{L} \nabla f(\wh z) - \Pi_Z\left[ \wh z - \tfrac{1}{L} \nabla f(\wh z) \right] \right\|^2.
%\end{eq}
However, in the general constrained case~$\Dz$ provides a stronger criterion, in the following sense:
\begin{itemize}
\item[(i)]
For any~$z,\zeta,L$ one has $\Ez(z,\zeta,L) \le \Dz(z,\zeta,L)$; thus, any~$(\ex,\ey)$-FNE in the sense of~\cref{def:stationarity} is an~$(\ex, \ey)$-FNE  in the weak sense, i.e., $\Ex(\wh x, \gx F(\wh x, \wh y), \Lxx) \le \ex, \Ey(\wh y, -\gy F(\wh x, \wh y), \Lyy) \le \ey$. See~\cite[Thm. 4.3]{barazandeh2020solving}.
\item[(ii)] 
\odima{The converse is generally false (unless in the unconstrained case). In particular, in~\cref{sec:moreau} (cf.~\cref{rem:moreau-tightness}) we exhibit a minimization problem in which~$\Ez(\wh z, \nabla f(\wh z), L) \le \veps$ at~$\wh z \in Z$, but~$\Dz(\wh z, \nabla f(\wh z), L)$ is arbitrarily large.}
%in the one-dimensional problem
%$
%\textstyle\min_{z \ge R} [\tfrac{L}{2} z^2],
%$
%for any~$\wh z$ satisfying~$\Ez(\wh z, \nabla f(\wh z), L) = \veps$ one has~$\Dz(\wh z, \nabla f(\wh z), L) \ge \sqrt{2LR\veps + \veps^2}.$
\end{itemize}
\end{rem}
 
%~\cref{eq:stationarity} bounds the norm of the proximal gradient (see, e.g.,~\cite{nesterov2013introductory}) corresponding to the projected gradient ascent step from~$\wh y$ with natural stepsize~$1/\Lyy$, and reduces to~$\|\gy F(\wh x,\wh y)\| \le \ey$ if the updated point remains in the feasible set.

% This stationarity criterion is stronger, for the same~$\ey$, than the alternative commonly used criterion
% \[
% \Lyy \max_{y \in Y}\left\{ \lang \gy F(\wh x, \wh y), y - \wh y \rang  - \frac{\Lyy}{2} \|y - \wh y\|^2 \right\} \le \ey^2,
% \]
% \odima{see, e.g.,???}
% %which also reduces to the gradient norm bound if the gradient ascent update remains in~$Y$. 
% Indeed, from the first-order optimality conditions it follows (see appendix) that the maximum is always bounded from above by the squared norm of the proximal gradient.

%Note that when~$F$ is twice continuously differentiable, we have~$\Lxy = \Lyx$, and for biaffine functions we have~$\Lxx = \Lyy = 0$.
%For simplicity here we will consider unconstrained problems with Euclidean geometry.

Our goal is to provide an efficient algorithm for finding~$(\ex,\ey)$-FNE\footnotemark
\footnotetext{Exact~FNE might not exist when~$X$ is not compact; however~$(\ex,\ey)$-FNE exists for all~$\ex, \ey > 0$.}
given access to the full gradient oracle~$\nabla F(x,y)$. %, returns the value~$F(x,y)$ and full gradient at this point.
Following the established trend in the literature, we assume the feasible sets~$X,Y$ to be ``projection-friendly'', i.e.,~ Euclidean projection onto them can be done with a small computational effort; thus, the natural notion of efficiency is simply the number of gradient computations.
%Let us recap the parameters that fully specify our problem.
Besides the two accuracies~$\ex,\ey$, the Lipschitz parameters~$\Lxx, \Lyy, \Lxy$, and the ``radius''~$\Ry$ of~$Y$, %that specifies the ``size'' of the dual problem, 
we need a parameter quantifying the hardness of the primal problem -- that of minimizing %the primal function~$\vphi(\cdot)$,
\begin{eq}
\label{def:prim-function}
\vphi(x) := \max_{y \in Y} F(x,y).
\end{eq} 
Since~$X$ can be unbounded, the natural choice of such parameter is the \textit{primal gap}~$\Gap$,
\[
\Gap := \vphi(x_0) - \min_{x \in X} \vphi(x),
\] 
where~$x_0$ is the initial iterate. 
To give a concise and intuitive statement of our main result, it is helpful to define the ``coupling-adjusted'' counterpart~$\Lyy^+$ of~$\Lyy$, defined as
\begin{eq}
\label{def:lyy-plus}
\Lyy^+ := \Lyy + \frac{\Lxy^2}{\Lxx},
\end{eq}
as well as the unit-free quantities -- the ``complexity factors''~$\Tx$ and~$\Ty$ given by
\begin{eq}
\label{eq:complexity-factors}
\Tx := \frac{\Lxx \Gap}{\ex^2}, \quad \Ty := \sqrt{\frac{\Lyy^+ \Ry}{\ey}}.
\end{eq}
Upon consulting the literature (e.g.,~\cite{carmon2017lower,nesterov2012howto}), we recognize~$\Tx$ as the iteration complexity of finding~$\ex$-stationary point (with respect to gradient norm) in the class of unconstrained minimization problems with~$\Lxx$-smooth (possibly nonconvex) objective and initial gap~$\Gap$.
On the other hand, we recognize~$\Ty$ as the tight complexity bound for the problem of finding~$\ey$-stationary point in the class of maximization problems with {concave} and~$\Lyy^+$-smooth objective, given the initial point within~$\Ry$ distance of an optimum, using first-order information. (This bound is also tight in the constrained setup, with~$\ey$ bounding the proximal gradient norm.)
We now state our main result.
%Our main contribution is the following result.

\begin{theorem}[Abridged formulation of~\cref{th:upper-bound}]
\label{th:upper-bound-simple}
There exists an algorithm that, given~$\FullData$, outputs~$(\ex,\ey)$-FNE of the problem~\eqref{opt:min-max} in
%\begin{align}
%\label{eq:assumption-gap}
%\ey\Ry &\le \Lyy \Ry^2 \le \Gap,\\
%\label{eq:assumption-acc}
%\ey \Ry &\le \frac{\ex^2 \Lyy}{\Lxx \Lyy + \Lxy^2}.
%\end{align}
%Then Algorithm~\ref{alg:FNE} finds an~$(\ex,\ey)$-FNE of~$F(x,y)$ in at most
\begin{eq}
\label{eq:complexity-simple}
\wt O\left(\Tx \Ty \right) %+ \frac{\Lyy^+\Ry^2}{\Gap} \Tx^{5/4}
\end{eq}
computations of~$\nabla F(x,y)$ and projections, where~$\wt O(\cdot)$ hides logarithmic factors in~$\Tx^{\vphantom+},\Ty$. 
\end{theorem}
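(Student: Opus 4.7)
The plan is to combine an outer inexact proximal-point loop on the primal function~$\vphi$ with an inner accelerated routine for the resulting strongly-convex--concave subproblem. At outer step~$t$, we approximately solve
\begin{equation*}
\min_{x \in X} \max_{y \in Y} \bigl\{ F(x,y) + \Lxx\|x - x_t\|^2 \bigr\};
\end{equation*}
since~$F(\cdot,y)$ has an~$\Lxx$-Lipschitz gradient, the augmented function is~$\Lxx$-strongly convex in~$x$ and remains concave in~$y$. Denote the exact saddle of the subproblem by~$(x_{t+1}^*, y_t^*)$ and the inexact iterate we actually output by~$(x_{t+1}, y_{t+1})$. A standard inexact proximal-point argument then shows that each outer step decreases~$\vphi$ by at least~$c\Lxx\|x_{t+1}^* - x_t\|^2$ up to inner-solver error, so that after~$O(\Tx)$ steps some iterate satisfies~$\|x_{t+1}^* - x_t\| \le O(\ex/\Lxx)$. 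To translate this into a bound on the strong criterion~$\Dx$, I would use the KKT condition
\begin{equation*}
\gx F(x_{t+1}^*, y_t^*) + 2\Lxx(x_{t+1}^* - x_t) \in -N_X(x_{t+1}^*),
\end{equation*}
plug it into the definition of~$\Dx$, and complete the square, obtaining~$\Dx(x_{t+1}^*, \gx F(x_{t+1}^*, y_t^*), \Lxx) \le 2\Lxx\|x_{t+1}^* - x_t\|$; a smoothness-based perturbation then transfers this bound from~$(x_{t+1}^*, y_t^*)$ to~$(x_{t+1}, y_{t+1})$.

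For the inner loop, strong convexity in~$x$ lets us eliminate~$x$ and consider the concave ``dual'' function $\psi_t(y) := \min_{x\in X}\{F(x,y) + \Lxx\|x-x_t\|^2\}$, whose gradient is~$\Lyy^+$-Lipschitz by Danskin's theorem (the extra~$\Lxy^2/\Lxx$ term arises from differentiating the inner minimizer~$x_t^*(y)$ with respect to~$y$). The plan is to maximize~$\psi_t$ with Nesterov's fast gradient method; since~$x_t^*(y)$ is not exactly computable, at each FGM step we replace it with an approximate minimizer produced by a short accelerated run on the well-conditioned (condition number~$O(1)$) strongly convex inner problem in~$x$. This yields an inexact first-order oracle for~$\psi_t$, and invoking the inexact-oracle FGM analysis of~\cite{devolder2014first} with per-call error driven to~$1/\textup{poly}(\Ty)$, we reach the accuracy required by the outer argument in~$O(\Ty)$ FGM iterations, each comprising~$O(\log\Ty)$ inner-inner steps; this contributes the $\wt{O}(\cdot)$ logarithmic factor.

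It remains to assemble the stationarity guarantees. For~$\Dy$, observe that~$\Dy(y_t^*, -\gy F(x_{t+1}^*, y_t^*), \Lyy) = 0$ at the exact saddle (by the~$y$-KKT condition), so only the inner-solver perturbations contribute to the final~$\Dy$ value at~$(x_{t+1}, y_{t+1})$. For~$\Dx$, we combine the~$2\Lxx\|x_{t+1}^* - x_t\|$ term from the outer analysis with analogous perturbation terms. Balancing the inner, inner-inner, and outer accuracies yields the claimed~$\wt O(\Tx\Ty)$ total count of gradient and projection evaluations.

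The main difficulty I anticipate is propagating inexactness through the nested solvers while preserving the \emph{strong} stationarity criterion of~\cref{def:stationarity}, which, as highlighted in~\cref{rem:criteria-comparison}, is strictly stronger than the proximal-gradient criterion in the presence of constraints. Standard FGM guarantees deliver function-value suboptimality, and one must carefully convert these into bounds on~$\Dx$ and~$\Dy$ via the KKT characterization of the exact saddle, without losing polynomial factors in~$\Tx$ or~$\Ty$ that would inflate the final complexity beyond~$\wt O(\Tx\Ty)$.
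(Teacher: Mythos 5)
You have the right decomposition — outer inexact proximal steps in $x$, an FGM loop in $y$, and an inner-inner well-conditioned $x$-solve providing an inexact oracle — and the $\Dx$ argument via the saddle-point KKT condition plus completion of the square matches the paper's. However, there is a genuine gap: you do not regularize the objective in $y$, and this is a load-bearing step that cannot be skipped. Without adding Nesterov-style regularization $-\tfrac{\ey}{2\Ry}\|y-\by\|^2$ to $F$, the dual function $\psi_t(y) = \min_{x\in X}[F(x,y) + \Lxx\|x-x_t\|^2]$ is merely concave (not strongly concave), and then (a) the restarted-FGM machinery (\cref{cor:fgm-restart-bound}) is unavailable, so you have no linear-rate contraction of $\|y_s - y_t^*\|$; (b) the plain FGM guarantee controls only the function gap, and $\Lyy^+\Ry^2/\Ty^2 = \ey\Ry$ is far too large to yield $\Dy \le \ey$ via smoothness — you would need roughly $\Ty^2 = \Lyy^+\Ry/\ey$ iterations rather than $\Ty$; and (c) your plan for $\Dy$ relies on "only inner-solver perturbations contribute," which tacitly requires a bound on $\|y_{t+1} - y_t^*\|$, and function-gap bounds cannot be converted to distance bounds without strong concavity.

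The paper resolves all three issues at once by working with $\bF(x,y) := F(x,y) - \tfrac{\ey}{2\Ry}\|y-\by\|^2$ throughout: $\psi_t$ becomes $(\ey/\Ry)$-strongly concave with condition number $\kappa_\y = O(\Ty^2)$, so $\sqrt{\kappa_\y}\log(\cdot) = \wt O(\Ty)$ restarted-FGM iterations give both a function gap and a distance bound $\|y_t - y_t^*\| \le O(\ey/\Lyy^+)$, which feed the $\Dy$ perturbation argument. The regularization also introduces an $O(\ey)$ offset in the $y$-KKT condition that must be tracked explicitly (the exact saddle $y_t^*$ is stationary for $\bF$, not $F$), giving the final $5\ey$ rather than $\ey$ in the guarantee. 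Your proposal is otherwise structurally aligned with the paper, so inserting the $y$-regularization and replacing the single FGM pass by the restart scheme would essentially recover the paper's argument; but as written the proposal cannot reach $\ey$-stationarity in $y$ within the claimed $\wt O(\Tx\Ty)$ budget.
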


This result merits some comments.
%Several remarks are in order regarding Theorem~\ref{th:upper-bound-simple}.
%\begin{itemize}
%\item
First, from~\cref{eq:complexity-factors}-\cref{eq:complexity-simple} we see that the complexity of finding~$(\ex,\ey)$-FNE in the problem~\eqref{opt:min-max} can be viewed as the product of the ``primal'' complexity of finding an~$\ex$-stationary point of~$F(\cdot, y)$ with fixed~$y$, and the ``dual'' complexity of finding~$\ey$-stationary point of~$\psi_x(\cdot) = \min_{x' \in X} F(x',\cdot)+\Lxx \|x'-x\|^2$ with fixed~$x \in X$ on~$\Ry$. 
\odima{Note that~$\psi_x(\cdot)$ has~$\Lyy^+$-Lipschitz gradient by Danskin's theorem~(see~\cite{danskin1966theory} and~\cite[Lem.~24]{nouiehed2019solving}), and is associated with the standard Moreau envelope~$\vphi_{2\Lxx}(x) := \textstyle\min_{x' \in X} [\vphi(x') + \Lxx \|x'-x\|^2]$ of the primal function~$\vphi(x)$ (\cite{jin2019local}).}
%(see, e.g., \cite{jin2019local}).
%~$\Lxx$-weakly-convex function~$ 

%\itemd{
\odima{
Second, in~\cref{sec:moreau} we prove that the primal component~$\wh x$ of an~$(\ex,\ey)$-FNE with~$\ey =\ex^2/(\Lxx\Ry)$ satisfies~$\|\vphi_{2\Lxx}(\wh x)\| = O(\ex)$. 
In view of~\cref{th:upper-bound-simple}, this leads to the complexity~$\wt O(\ex^{-3})$ of finding an~$\ex$-stationary point for the standard Moreau envelope (see~\cref{sec:moreau} for the rigorous result (cf.~\cref{eq:moreau-complexity-final}) and detailed discussion). 
%This recovers and extends 
Such a result is known from the recent literature~\cite{thekumparampil2019efficient,lin2020near} in the case~$X = \XX$; 
moreover, as we discuss in~\cref{sec:moreau}, the result~\cite[Prop.~4.12]{lin2019gradient} that is commonly used (in particular, in~\cite{lin2020near}) in order to reduce the Moreau envelope criterion to the criterion based on the norm of the proximal gradient (i.e., our weak criterion in~\cref{eq:proximal-gradient}) seems to be invalid when~$Y \ne \YY$.
Our results close these gaps by working with the strong criterion~\cref{def:our-measure}.
%%\begin{comment}
%% precisely, choosing~$\ey$ such that~$\Tx = O(\Ty^2)$), and using~\cite[Prop.~4.12]{lin2019gradient}, 
% in the unconstrained case we recover the recent result of~\cite{thekumparampil2019efficient}: the complexity~$\wt O(\ex^{-3})$ of finding~$\ex$-stationary point of the Moreau envelope. Such guarantee is natural if one is only interested in the primal problem, as it implies closeness to some point~$x$ at which~$\vphi$ has a subgradient with norm~$O(\ex)$ (\cite[Lem.~3.8]{lin2019gradient}).
%%with small norm of a subgradient of~$\vphi$ in the~\cite{lin2019gradient}. 
%%\odima{Reference prop. 5.2 in Tianyi} 
%%\end{comment}
}

%\item
Third, our approach %(and~\cref{th:upper-bound-simple} as well) 
can be  extended to composite objectives,~e.g., by following~\cite{nesterov2013first,optbook1}. 
To keep the presentation simple, we avoid such extension here (see, e.g., \cite{barazandeh2020solving}). 
On the other hand, extension to non-Euclidean geometries faces some non-trivial challenges that have not been properly addressed in the prior literature.\footnotemark
%That said, while extensions in the first two directions are straightforward , the third question is not completely trivial in the context of finding stationary points, and seems to have been largely ignored in the literature.
~In~\cref{sec:non-euclidean} we discuss these challenges and introduce the necessary adjustments into our framework.
\footnotetext{A notable exception is the work~\cite{zhao2020primal} that appeared shortly after the first version of this manuscript.}

\paragraph{Notation} 
Throughout the paper, and unless explicitly stated otherwise,~$\|\cdot\|$ and~$\lang \cdot, \cdot \rang$ denote the standard Euclidean norm and inner product regardless of the (Euclidean) space.~We let~$[T] := \{1, 2, ..., T\}$ for~$T \in \mathds{N}$.~$\log(\cdot)$ is the natural logarithm;~$g = O(f)$ means that for any~$z \in \text{Dom}(f) = \text{Dom}(g)$ one has~$f(z) \le C g(z)$ with~$C$ being a generic constant;~$g = \wt O(f)$ means the same but with~$C$ replaced by a poly-logarithmic factor in~$g$. 
We write~$\py F(x(y), y)$ for the partial gradient in~$y$ of~$F(x(y),y)$ as a function of~$y$; in other words,~$\py F(x(y), y) = \gy F(x,y)$ with~$x = x(y)$ substituted post-factum.
We shall introduce additional notation when the need arises.
\section{Building blocks and preliminaries}
\label{sec:toolbox}

%\paragraph{Preliminaries.}
Given a convex set~$Z$ in a Euclidean space~$\ZZ$ and a pair~$z,\zeta \in Z$, %and  ``simple'' function~$\Psi(\cdot)$ on~$\XX$, 
we define the {\em prox-mapping}
\begin{eq}
\label{eq:grad-map}
\prox_{z,Z}(\zeta) := \argmin_{z' \in Z} \lang \zeta, z'\rang +  \tfrac{1}{2} \|z' - z\|^2.
\end{eq}
%this (standard) definition is correct due to the strong convexity of the quadratic term. 
In what follows, we assume~$\prox_{z,Z}(\zeta)$ to be computationally cheap. 
\odima{Note that in the unconstrained case with $Z = \ZZ$, one has~$\prox_{z,Z}(\zeta) = z - \zeta$, whereas in the (general) constrained case one has~$\prox_{z,Z}(\zeta) = \Pi_{Z} (z - \zeta)$.}
%\begin{comment}
%Note also that, using~\cref{eq:grad-map}, we can rewrite the second part of~\cref{eq:stationarity} as
%\[
%\begin{ald}
%% \left\|\wh x - \prox_{\wh x,X} \left(\frac{\gx F(\wh x,\wh y)}{\Lxx} \right) \right\| \le \frac{\ex}{\Lxx}, \quad
%\left\|\wh y - \prox_{\wh y,Y} \left(-\frac{1}{\Lyy} \right) \right\| \le \frac{\ey}{\Lyy}.
%\end{ald}
%\]
%\end{comment}
Furthermore, in what follows we use the notion of \textit{inexact first-order oracle} for a smooth convex function due to~\cite{devolder2014first}. %\footnotemark
%\footnotetext{Unlike~\cite{devolder2014first}, we do not inlude~$L$ into the definition of inexact oracle, as in our situation this is unnecessary.}
%it simply corresponds to the Lipschitz constant of~$\nabla f$.}
\begin{definition}[$\delta$-inexact oracle]
\label{def:inexact-oracle}
Let~$f:Z\to\R$ be convex with~$L$-Lipschitz gradient. 
Pair~$[\wt f(\cdot), \wt \nabla f(\cdot)]$ is called~{\em inexact oracle for~$f$ with accuracy~$\delta \ge 0$} if for any pair of points~$z, z' \in Z$ one has
\begin{eq}
\label{eq:inexact-oracle}
0 \le f(z') - \wt f(z) - \langle \wt \nabla f(z), z' - z \rangle  \le \frac{L}{2}\|z'-z\|^2 + \delta.
\end{eq}
\end{definition}
Note that, unlike~\cite{devolder2014first}, we do not include~$L$ into the definition of inexact oracle. 
%as in our situation this is unnecessary.

Next we present Nesterov's fast gradient method (FGM) for smooth convex optimization with inexact oracle (see~\cite{devolder2014first}) and a restart scheme for it. We use them in two scenarios: 
(a)~minimization of a strongly convex function on~$X$ with exact oracle;
(b)~maximization of a strongly concave function on~$Y$ with~$\delta$-inexact oracle.

\vspace{-0.2cm}
\hspace{-0.85cm}
\begin{minipage}{0.45\textwidth}
\begin{algorithm}[H]
\caption{Fast Gradient Method}
\label{alg:fgm}
\begin{algorithmic}[1]
\Function{\FGM}{$z_0,Z,\gamma,T,\wt\nabla f(\cdot)$}
%\Require 
\State $G_0 = 0$
\For{$t = 0, 1, ..., T-1$}
\State $u_t = \prox_{z_0,Z}\left(\gam G_t\right)$ 
\State $\tau_t = \frac{2(t+2)}{(t+1)(t+4)}$ 
\State $v_{t+1} = \tau_t u_t + (1-\tau_t)z_t$ 
\State $g_{t} = \frac{t+2}{2} \wt\nabla f(v_{t+1})$ 
\State $w_{t+1} = \prox_{u_t,Z}\left(\gam g_{t}\right)$
\State $z_{t+1} = \tau_t w_{t+1} + (1-\tau_t) z_t$
\State $G_{t+1} = G_t + g_{t}$
\EndFor \\
\Return $z_T$
\EndFunction
\end{algorithmic}
\end{algorithm}
\end{minipage}
%\begin{minipage}{0.01\textwidth}
%\end{minipage}
\begin{minipage}{0.57\textwidth}
\vspace{-0.1cm}
\begin{algorithm}[H]
\caption{Restart Scheme for~FGM}
\label{alg:fgm-restart}
\begin{algorithmic}[1]
\Function{\FGMR}{$z^{0},Z,\gamma,T, S, \wt\nabla f(\cdot)$}
%\Require~initial point~$x_0$, number of restarts~$S$,~$L$,~$T$,~inexact oracle~$\wt\nabla f(\cdot)$ with~$\delta \le \delta_T$, cf.~\eqref{eq:fgm-oracle-accuracy-bound}\\
%\State $T = \sqrt{20\bar\kappa}$
%\State $\delta = \delta_T$,~cf.~\eqref{eq:fgm-oracle-accuracy-bound}
%\State $\gam = 1/L$
\For{$s \in [S]$}
\State $z^{s} = \; \FGM(z^{s-1},Z,\gamma,T,\wt\nabla f(\cdot))$
\EndFor\\
\Return $z^{S}$
\EndFunction
\end{algorithmic}
\end{algorithm}
\subsection{Fast gradient method with inexact oracle}
\label{sec:fgm-inexact}
%
%To simplify the notation, we will simply write~$\prox_{x}(g)$ when~$\Psi \equiv 0$.
%Note that~$\prox_{x}(\gam g)$ is nothing but~$\Prox_{\bar{f}}(x)$ with~$\bar f$ being the first-order Taylor expansion of~$f$ at~$x$. 
%Note also that~$\prox_{x,\Psi}(g)$ is efficiently computable (due to ``simplicity'' of~$\Psi$). 
%Further on in this section we assume that~$\Psi \equiv 0$ and the norm is Euclidean. 
%
%
%We denote~$R_x$ the upper bound on the distance between the initial point~$x_0$ and a minimizer~$x_*$, and assume that the feasible set~$X \subset \XX$ is contained within the ball of radius~$R_X$ (finite radius is usually needed to provide inexact oracle; if an exact one is available, we can handle~$X = \XX$ as well). 
%
%
%\footnotetext{We slightly change the notation for convenience; in particular,~\cite{devolder2014first} use~$\delta$ for the accuracy in the first-order optimality conditions, so that it incorporates the radius of the feasible set.}
%

Assume we are given initial point~$z_0 \in Z$, target number of iterations~$T$, stepsize~$\gam > 0$, and access to a~$\delta$-inexact (or, possibly, exact) oracle for function~$f: Z \to \R$ which satisfies the requirements in~\cref{def:inexact-oracle}. 

\end{minipage}
\vspace{0.05cm}

We will use a variant of fast gradient method with inexact oracle due to~\cite{devolder2014first}, given here as Algorithm~\ref{alg:fgm}, that performs~$T$ iterations and outputs approximate minimizer~$z_T$ of~$f$; each of these iterations reduces to a single call of~$\wt \nabla f(\cdot)$, two prox-mapping computations and a few entrywise vector operations. 
Note that the inexact oracle~$\wt\nabla f(\cdot)$ is passed as an input parameter (i.e., ``function handle''); this means that such an oracle must be implemented as an external procedure.

Assuming that the error of~$\wt \nabla f(\cdot)$ is small enough, %error~$\delta$ in the sense of~\cref{def:inexact-oracle}, 
the work~\citep{devolder2014first} ensures that the standard~$O(T^{-2})$ convergence of FGM is preserved.
Let us now rephrase their result.
\begin{theorem}[{\cite[Thm.~5 and~Eq.~(42)]{devolder2014first}}]
\label{th:devolder}
Running Algorithm~\ref{alg:fgm} runs with~$\gam = 1/L$ and~$\delta$-inexact oracle of~$f$ that is~$L$-smooth, convex, and minimized at~$z^*$ such that~$\|z_0 - z^* \| \le R$, ensures that
$
f(z_T) - f(z^*) \le {4 L R^2}/{T^2} + 2\delta T.
$
As a result, one has
\begin{eq}
\label{eq:fgm-oracle-accuracy-bound}
f(z_T) - f(z^*) \le \frac{5L R^2}{T^2} 
\;\; \text{whenever} \;\;
\delta \le \delta_T := \frac{LR^2}{2T^3}.
\end{eq}
\end{theorem}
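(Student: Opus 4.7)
The plan is to run the standard estimating-sequence analysis of Nesterov's fast gradient method, carefully tracking how the two inequalities in \cref{def:inexact-oracle} contribute errors at each step. Set $\gam = 1/L$ and introduce weights $a_t = (t+2)/2$ together with their partial sums $A_T = \sum_{t=0}^{T-1} a_t = T(T+3)/4 = \Theta(T^2)$; these are exactly the coefficients scaling $\wt\nabla f(v_{t+1})$ in line 7 of \cref{alg:fgm}. Define the estimating function
\begin{eq*}
\phi_T(z) := \sum_{t=0}^{T-1} a_t \bigl[\wt f(v_{t+1}) + \langle \wt\nabla f(v_{t+1}), z - v_{t+1}\rangle\bigr] + \tfrac{L}{2}\|z - z_0\|^2,
\end{eq*}
so that line 4 of \cref{alg:fgm} realizes $u_T = \argmin_{z \in Z} \phi_T(z)$. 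Summing the lower inequality of \cref{def:inexact-oracle} over the iterates yields the ``one-sided'' estimate $\phi_T(z^*) \le A_T f(z^*) + \tfrac{L}{2} R^2$.

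The core of the argument is the inductive claim $A_t f(z_t) \le \min_{z \in Z} \phi_t(z) + E_t$, where the accumulated error satisfies $E_t \le \delta \sum_{k=0}^{t-1} A_{k+1}$. The induction step rests on two ingredients. First, since $\phi_{t+1}$ is $L$-strongly convex with minimizer $u_{t+1}$, the three-point identity
\begin{eq*}
\phi_{t+1}(z) \ge \phi_{t+1}(u_{t+1}) + \tfrac{L}{2}\|z - u_{t+1}\|^2
\end{eq*}
together with the decomposition $\phi_{t+1} - \phi_t = a_t[\wt f(v_{t+1}) + \langle \wt\nabla f(v_{t+1}), \cdot - v_{t+1}\rangle]$ and evaluation at $z = u_t$ lower-bounds $\phi_{t+1}^*$ by $\phi_t^*$ plus a linear remainder evaluated at $w_{t+1}$, plus $\tfrac{L}{2}\|w_{t+1} - u_t\|^2$. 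Second, the upper inequality of \cref{def:inexact-oracle} applied at the pair $(v_{t+1}, z_{t+1})$ bounds $f(z_{t+1})$ by a quadratic in $\|z_{t+1} - v_{t+1}\|^2$ plus an additive $\delta$. The choice $\tau_t = 2(t+2)/((t+1)(t+4))$ is engineered so that $\tau_t \cdot (w_{t+1} - u_t) = z_{t+1} - v_{t+1}$ and the two quadratic terms cancel upon rescaling by $A_{t+1}$, leaving a residual of at most $A_{t+1} \delta$ per step. Summation then gives $E_T \le \delta \sum_{t=0}^{T-1} A_{t+1} \le 2\delta T \cdot A_T$.

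Combining the two sandwich bounds yields $A_T (f(z_T) - f(z^*)) \le \tfrac{L}{2} R^2 + E_T$, and dividing by $A_T \ge T^2/4$ produces the claimed
\begin{eq*}
f(z_T) - f(z^*) \le \frac{4 L R^2}{T^2} + 2\delta T.
\end{eq*}
The second statement is immediate: choosing $\delta \le \delta_T = L R^2 / (2T^3)$ gives $2 \delta T \le L R^2/T^2$, so the combined bound is $5 L R^2 / T^2$.

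The main obstacle, which is the distinguishing feature of the inexact acceleration analysis, is establishing that the per-step penalty is $A_{t+1} \delta$ rather than $\delta$: this linear-in-$t$ accumulation of inexactness (in contrast to the no-accumulation regime of non-accelerated gradient descent) is the price of acceleration and, as \cite{devolder2014first} show, is tight in the worst case. Getting the correct per-step contribution requires the exact alignment between the weights $a_t, A_t$ and the stepsizes $\tau_t$ that is dictated by $\gam L = 1$, so that the quadratic error terms coming from smoothness of $f$ and from strong convexity of the estimating sequence balance precisely.
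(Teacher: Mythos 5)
The paper does not prove this statement: it is quoted directly from Devolder, Glineur, and Nesterov~\cite{devolder2014first} (their Thm.~5 and Eq.~(42)), and the citation \emph{is} the paper's proof. You have chosen to rederive it, and the estimating-sequence analysis you sketch is exactly the technique used in the cited work, so the route is the same in substance. The bookkeeping is right: $a_t = (t+2)/2$, $A_T = T(T+3)/4 \ge T^2/4$, the recognition that $\tau_t = a_t/A_{t+1}$ and hence $z_{t+1} - v_{t+1} = \tau_t(w_{t+1}-u_t)$, the cancellation of quadratic terms because $A_{t+1}\tau_t^2 = a_t^2/A_{t+1} = (t+2)^2/[(t+1)(t+4)] \le 1$, the resulting recursion $E_{t+1} = E_t + A_{t+1}\delta$, and the crude bound $E_T = \delta\sum_{t=0}^{T-1} A_{t+1} \le 2\delta T A_T$. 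Dividing by $A_T$ then gives the claim (in fact the slightly sharper constant $2LR^2/T^2$ in place of $4LR^2/T^2$; the stated~$4$ is simply a looser constant from \cite{devolder2014first}).

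The one place where the write-up is not quite right is the sentence describing how $\phi_{t+1}^*$ is lower-bounded. You say strong convexity of $\phi_{t+1}$ around its minimizer $u_{t+1}$, evaluated at $z = u_t$, produces a lower bound. But $\phi_{t+1}(u_t) \ge \phi_{t+1}^* + \frac{L}{2}\|u_t - u_{t+1}\|^2$ is an \emph{upper} bound on $\phi_{t+1}^*$, which goes the wrong way. The correct ingredient is $L$-strong convexity of $\phi_t$ around \emph{its} constrained minimizer $u_t$: namely, $\phi_t(z) \ge \phi_t^* + \frac{L}{2}\|z - u_t\|^2$ for all $z \in Z$. Adding the increment $a_t\ell_t(z)$, where $\ell_t(z) = \wt f(v_{t+1}) + \langle \wt\nabla f(v_{t+1}), z - v_{t+1}\rangle$, gives
\[
\phi_{t+1}(z) \;\ge\; \phi_t^* + \tfrac{L}{2}\|z - u_t\|^2 + a_t\ell_t(z), \quad \forall z \in Z,
\]
and the right-hand side is minimized over $z \in Z$ at precisely $w_{t+1}$ (line~8 of \cref{alg:fgm} is exactly the corresponding prox-step), so
\[
\phi_{t+1}^* \;\ge\; \phi_t^* + \tfrac{L}{2}\|w_{t+1} - u_t\|^2 + a_t\ell_t(w_{t+1}).
\]
From here one invokes the induction hypothesis and the lower half of \cref{def:inexact-oracle} at $(v_{t+1}, z_t)$ to replace $\phi_t^*$ by $A_t\ell_t(z_t) - E_t$, combines the affine terms into $A_{t+1}\ell_t(z_{t+1})$, and applies the upper half of \cref{def:inexact-oracle} at $(v_{t+1}, z_{t+1})$ to absorb the quadratic and incur the $+A_{t+1}\delta$ penalty, exactly as you describe in the remainder of your argument. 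Once the lower-bound step is stated correctly, the proof is complete.
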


%\paragraph{Restarts.}
When~$f$ is also~$\lambda$-strongly convex,~\cref{eq:fgm-oracle-accuracy-bound} allows to bound the distance to~$z^*$:
\begin{eq}
\label{eq:fgm-one-epoch-progress}
\|z_T - z^*\|^2 \le {10\kappa R^2}/{T^2},
%\quad \text{and} \quad f(x_T) - f(x^*) \le \frac{10\kappa f(x_0) - f(x_*)}{(T+1)^2},
\end{eq}
where~$\kappa = L/\lambda$ is the condition number. That is, we are guaranteed to get twice closer to the optimum after~$T = O(\sqrt{\kappa})$ iterations. 
Following~\cite{nesterov2013gradient}, we exploit this fact to obtain linear convergence via the simple restart scheme given in~\cref{alg:fgm-restart}, and derive the following result.

\begin{corollary}
\label{cor:fgm-restart-bound}
Run~\cref{alg:fgm-restart} with~$\gam = 1/L$, parameters~$T,S$ satisfying
\begin{eq}
\label{eq:fgm-restart-params}
T \ge \sqrt{40 \kappa}, \quad S \ge \log_{2}\left({3LR}/{\veps} \right)
\end{eq}
for some~$\veps > 0$, and~$\delta \le \delta_T$, cf.~\eqref{eq:fgm-oracle-accuracy-bound}. 
Then the final iterate~$z^{S}$ satisfies
\begin{eq}
\label{eq:fgm-approximation-bounds}
\begin{ald}
\|z^{S}- z^*\| \le \frac{\veps}{3L}, \quad  
f(z^{S}) - f(z^*) \le \frac{\veps^2}{18L}, \quad
%L\left\|z^{S}- \prox_{z^{S},Z}\left(\tfrac{1}{L}\nabla f(z^{S})\right) \right\| &\le&  {\veps}, \quad \quad
%\text{and} \quad  
\Dz(z^{S}, \nabla f(z^S), L) &\le \frac{\veps}{3}.
\end{ald}
\end{eq}
\end{corollary}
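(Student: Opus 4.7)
The plan is to prove the three bounds in~\cref{eq:fgm-approximation-bounds} sequentially, each feeding the next. The workhorse is~\cref{th:devolder} applied at every restart; the critical closing ingredient is a simple inequality bounding~$\Dz^2$ by the primal suboptimality, which lets us turn a function-value guarantee into a bound on the strong stationarity measure.

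First I will establish by induction on~$s \in \{0,\ldots,S\}$ that~$\|z^s - z^*\| \le R/2^s$. The base case is the hypothesis on~$R$. For the inductive step, I apply~\cref{th:devolder} to epoch~$s$ with the updated initial distance $R_{s-1} := \|z^{s-1} - z^*\| \le R/2^{s-1}$ and inexact-oracle accuracy $\delta \le L R_{s-1}^2/(2T^3)$, obtaining $f(z^s) - f(z^*) \le 5 L R_{s-1}^2/T^2$ by~\cref{eq:fgm-oracle-accuracy-bound}; using $\lambda$-strong convexity and $T^2 \ge 40\kappa$, I get $\|z^s - z^*\|^2 \le 10\kappa R_{s-1}^2/T^2 \le R_{s-1}^2/4$, closing the induction. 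Setting~$s = S$ and using~$S \ge \log_2(3LR/\veps)$ yields~$\|z^S - z^*\| \le R/2^S \le \veps/(3L)$, the first claim.

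The function-value bound is a re-reading of the last inductive step: $f(z^S) - f(z^*) \le 5L R_{S-1}^2/T^2 \le L R_{S-1}^2/(8\kappa)$ using $T^2 \ge 40\kappa$, and $R_{S-1} \le 2R/2^S \le 2\veps/(3L)$ then yields $f(z^S) - f(z^*) \le \veps^2/(18L\kappa) \le \veps^2/(18L)$ since $\kappa \ge 1$. For the final $\Dz$-bound, the key observation is that for any $L$-smooth convex $f$ minimized at $z^* \in Z$,
\begin{eq*}
\Dz^2(z, \nabla f(z), L) \le 2L \bigl(f(z) - f(z^*)\bigr),
\end{eq*}
because the definition of~$\Dz$ in~\cref{def:our-measure} rearranges to $\Dz^2(z,\nabla f(z), L)/(2L) = f(z) - \min_{z' \in Z} Q(z')$ with quadratic upper model $Q(z') := f(z) + \langle \nabla f(z), z'-z\rangle + (L/2)\|z'-z\|^2$, and the descent lemma $Q \ge f$ on $Z$ gives $\min_{z' \in Z} Q(z') \ge f(z^*)$. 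Combining this with the second claim at $z = z^S$ yields $\Dz^2 \le \veps^2/9$, i.e., $\Dz \le \veps/3$.

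The one delicate point, which I expect to be the main obstacle, is the oracle-accuracy bookkeeping in the induction: the stated $\delta \le \delta_T$ is tied to the initial distance $R$ via~\cref{eq:fgm-oracle-accuracy-bound}, whereas the halving argument at epoch~$s$ strictly requires $\delta \le L R_{s-1}^2/(2T^3)$, which is most stringent at the final epoch. Once this accuracy requirement is interpreted (and implemented) appropriately across epochs, the remaining steps are routine consequences of~\cref{th:devolder}, strong convexity, and the upper-model characterization of~$\Dz$.
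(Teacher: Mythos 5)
Your proof follows the paper's argument step by step: the same halving induction via \cref{eq:fgm-one-epoch-progress}, the same re-reading of \cref{eq:fgm-oracle-accuracy-bound} for the function-value bound, and the same quadratic-upper-model argument that bounds $\Dz^2$ by $2L\,(f(z^S)-f(z^*))$. The only cosmetic difference is that your function-value bound retains an extra factor $1/\kappa$ (sharper, but the paper's version suffices).

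More importantly, the ``delicate point'' you flag is real and the paper's own proof does not address it. The paper simply writes ``repeating this process for $s$ epochs, we have $\|z^s-z^*\|\le 2^{-s}R$,'' but the oracle-accuracy condition $\delta \le LR^2/(2T^3)$ stated in the corollary (tied to the \emph{initial} $R$) does not imply $\delta \le LR_{s-1}^2/(2T^3)$, which is what \cref{eq:fgm-oracle-accuracy-bound} requires for the $s$-th epoch. Tracking constants, the restart recursion under a fixed $\delta = LR^2/(2T^3)$ yields
\[
R_s^2 \;\le\; \frac{R_{s-1}^2}{5} + \frac{4\delta T}{\lambda}
\;\le\; \frac{R_{s-1}^2}{5} + \frac{R^2}{20},
\]
which already breaks the claimed halving at $s=2$ (one gets $R_2 \le R/\sqrt{10}$, not $R/4$) and, unrolled, stalls around $R/4$ rather than decaying to $\veps/(3L)$. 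So the corollary as literally stated is too weak; a correct version needs $\delta$ commensurate with the \emph{target} radius, e.g.\ $\delta \lesssim L\,(\veps/L)^2/T^3 \sim \veps^2/(LT^3)$, under which your induction goes through (halving while $R_{s-1}>\veps/(3L)$, and the iterate then stays below the target). None of this affects the later \cref{prop:ppm-via-fgm}, which invokes the corollary with $\delta=0$, but the corollary is also used with $\delta>0$ in Step $3$ of the proof of \cref{th:upper-bound}, where the same tightening of the $\delta$-requirement is needed. In short: your approach is identical to the paper's, and you have correctly spotted an imprecision that the paper's proof sweeps under the rug.
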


\begin{proof}
%We consider the case where~$L^2 R^2$ is used in~\eqref{eq:fgm-restart-params}; the other case is similar but uses the second bound of~\eqref{eq:fgm-one-epoch-progress}.
\odima{ 
By~\cref{eq:fgm-one-epoch-progress}, $T \ge \sqrt{40 \kappa}$ iterations in the first epoch ensure that~$\|z^{1} - z^*\| \le R/2$, i.e., we halve the initial distance to the optimum for the next epoch in which~\FGM{} is initialized at~$z^{1}$.
Repeating this process for~$s$ epochs, we have~$\|z^{s} - z^*\| \le 2^{-s} R$. 
In particular, after~$S \ge \log_{2}(3LR/\veps)$ epochs we arrive at the first bound in~\cref{eq:fgm-approximation-bounds}.
%$\| z^{S} - z^*\| \le 2^{-S} R \le {\veps}/{(3L)}.$
Now, arguing in a similar manner, but this time using~\cref{eq:fgm-oracle-accuracy-bound}, we have that
\[
\begin{ald}
f(z^{s}) - f(z^*) 
\le \frac{5L(2^{-s+1}R)^2}{T^2}
%&\le \frac{f(z^{s-1}) - f(z^*)}{4} 
%\le  \frac{f(z^{1}) - f(z^*)}{4^{s-1}} 
\le \frac{20 LR^2}{4^s T^2}  \le \frac{LR^2}{2^{2s+1}}, 
\end{ald}
\]
where in the first step we combined~\cref{eq:fgm-oracle-accuracy-bound} with with the bound~$\|z^{s-1} - z^*\| \le 2^{-s+1} R$, and in the end we used~$\kappa \ge 1$.}
Plugging in~$2^{2S+1} = 18 L^2R^2/\veps^2$, we verify the second inequality in~\cref{eq:fgm-approximation-bounds}.
%\begin{comment}
%For the third inequality in~\cref{eq:fgm-approximation-bounds}, observe, by the first-order optimality conditions for~\cref{eq:grad-map}, that
%\[
%z^* = \proj_{Z}\left( z^* - \nabla f(z^*)/L\right).
%\] 
%Let~$\wh z^S := \prox_{z^S,Z}(\nabla f(z^{S})/L) = \proj_Z \left(z^{S} - \nabla f(z^{S})/L \right)$. The triangle inequality then implies
%\[
%\left\|z^{S} - \wh z^S \right\| \le  \| z^{S} - z^*\| + \|z^* - \wh z^S\| 
%\le 2\| z^{S} - z^*\| + \frac{1}{L}\|\nabla f(z^*) - \nabla f(z^{S}) \| \le 2 \| z^{S} - z^*\| = \frac{\veps}{L},
%\]
%where we used non-expansiveness of projection and the Lipschitzness of~$\nabla f(\cdot)$. 
%\end{comment}
Finally, for the last inequality in~\cref{eq:fgm-approximation-bounds}, we first observe that, due to the smoothness of~$f$, it holds that
$
f(z) - f(z^S) \le \lang \nabla f(z^S), z - z^S \rang  + \frac{L}{2}\|z-z^S\|^2.
$
Thus, one has~$f(z^*) - f(z^S) \le \min_{z \in Z}[\lang \nabla f(z^S), z - z^S \rang  + \frac{L}{2}\|z-z^S\|^2]$, whence
\small
\begin{eq}
\label{eq:-fgm-near-stationarity-argument}
\begin{aligned}
&\Dz^2(z^{S}, \nabla f(z^S), L) 
= 2L \max_{z \in Z} \left[-\langle \nabla f(z^S), z - z^S \rangle - \tfrac{L}{2}\|z-z^S\|^2 \right] \\
= &-2L \min_{z \in Z} \left[\langle \nabla f(z^S), z - z^S \rangle  + \tfrac{L}{2}\|z-z^S\|^2 \right] \le -2L [f(z^*) - f(z^S)] \le {\veps^2}/{9},
\end{aligned}
\end{eq}
\normalsize
where the final inequality uses the second part of~\cref{eq:fgm-approximation-bounds} proved earlier.
\end{proof}
%When~$R$ is unknown, one must rely on the upper bound when choosing the number of restarts according to
%In the unconstrained case we can use the radius of the feasible set in place of~$R$ for the sake of 
%and thus assume it to be known.  from using~\cref{alg:fgm-restart} in the constrained scenario, 
%Note that the estimate~$\wt O(\sqrt{\kappa})$ is optimal.

When~\cref{alg:fgm-restart} is used for minimization in~$x$, the complexity factor~$\Tx$ is parametrized by the \odima{upper bound on the initial objective gap~$\Gap_f [\ge f(z^0) - f(z^*)]$} rather than~$R$, 
and the \textit{exact} oracle~$\nabla f(\cdot)$ is available (function value is not used).
\odima{Note that, by strong convexity, such a bound also implies a bound on the initial distance to the optimum, namely~$R^2 = 2\kappa \Gap_f / L$, 
and we arrive at the following result.} 
%and we make the following observation. %As a result, we have the following.
\begin{corollary}
\label{cor:unconstrained-case}
\odima{Assume that~$f(z^0) - f(z^*) \le \Gap_f$.}
Run~\cref{alg:fgm-restart} with~$\delta = 0$, %~$S$ satisfying
\begin{eq}
\label{eq:restarts-from-gap}
S \ge \frac{1}{2}\log_2\left({18\kappa L \Gap_f}/{\veps^2}\right),
\end{eq}
and other parameters set as in~\cref{cor:fgm-restart-bound}. Then the bounds in~\cref{eq:fgm-approximation-bounds} remain valid.
\end{corollary}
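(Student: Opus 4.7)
The plan is to derive this corollary as a direct specialization of \cref{cor:fgm-restart-bound} in which the initial distance parameter $R$ is replaced by a quantity expressed through the initial objective gap $\Gap_f$. The reduction hinges on a single elementary fact: when $f$ is $\lam$-strongly convex with minimizer $z^*$, one has $\frac{\lam}{2}\|z^0-z^*\|^2 \le f(z^0)-f(z^*) \le \Gap_f$, and hence $\|z^0-z^*\|^2 \le 2\Gap_f/\lam = 2\kappa\Gap_f/L$. So we may invoke \cref{cor:fgm-restart-bound} with
\[
R \;:=\; \sqrt{2\kappa\Gap_f/L},
\]
provided the hypothesis $S \ge \log_2(3LR/\veps)$ of that corollary is verified.

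Next I would perform the bookkeeping to check that the stated number of restarts in \cref{eq:restarts-from-gap} suffices. Substituting $R = \sqrt{2\kappa \Gap_f/L}$ gives
\[
\log_2\!\left(\frac{3LR}{\veps}\right) = \log_2\!\left(\frac{3\sqrt{2\kappa L \Gap_f}}{\veps}\right) = \tfrac{1}{2}\log_2\!\left(\frac{18\kappa L \Gap_f}{\veps^2}\right),
\]
which matches \cref{eq:restarts-from-gap} exactly. The requirement $T \ge \sqrt{40\kappa}$ is inherited verbatim, and the inexact-oracle tolerance condition $\delta \le \delta_T$ is trivially satisfied by $\delta = 0$ (here an exact oracle is available since we only intend to minimize in $x$ using $\nabla f(\cdot)$, and function values play no role in \cref{alg:fgm}). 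With these parameter choices in place, the three conclusions in \cref{eq:fgm-approximation-bounds} follow immediately from \cref{cor:fgm-restart-bound} without any further estimation.

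There is no real obstacle in this proof: the only subtle point is making sure that the translation from the gap-based to the distance-based parameterization is tight enough that the logarithm in $S$ only acquires a factor of $\tfrac{1}{2}$ and an extra $\kappa$ inside (rather than outside) the log. Since strong convexity gives the exact conversion $R^2 = 2\kappa\Gap_f/L$, this is automatic. The proof thus reduces to a two-line computation invoking \cref{cor:fgm-restart-bound}, and no new analytical ingredient is required.
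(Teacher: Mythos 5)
Your proof is correct and follows exactly the paper's own (unwritten) argument: the paper simply notes before the corollary that, by strong convexity, a gap bound $\Gap_f$ implies $R^2 = 2\kappa\Gap_f/L$, and you have spelled out precisely this conversion together with the straightforward logarithm bookkeeping that turns $\log_2(3LR/\veps)$ into $\tfrac12\log_2(18\kappa L\Gap_f/\veps^2)$. No discrepancy with the paper's reasoning.
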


\subsection{Proximal point operator and its implementation via FGM}
\label{sec:ppm-via-fgm}
Next we briefly review the proximal point method, which forms the backbone of our approach, in the context of searching for stationary points of nonconvex functions.
Then we show how the iterations of this method can be approximated by using~\cref{alg:fgm-restart}. 

%\paragraph{Proximal point method.}
Given a convex set~$X$ and~$\phi: X \to \R$ with~$L$-Lipschitz gradient,
% with the set~$X$ being convex, but 
the {\em proximal point operator of~$\phi$ on~$X$ with stepsize~$0 < \gam < 1/L$} is defined by
\begin{eq}
\label{def:prox-map}
%\Prox_{\gamma \phi}(x) :=
x \mapsto x^+_{\gam\phi, X}(x) := \argmin_{x' \in X} \left[ \phi (x') + \frac{1}{2 \gam} \|x' - x\|^2 \right].
\end{eq}
Denoting~$x^+ = x^+_{\gam\phi,X}(x)$ for brevity, the first-order optimality condition in~\cref{def:prox-map} writes
\begin{eq}
\label{eq:ppm-foo}
\big\langle \nabla\phi(x^+) + \tfrac{1}{\gamma} (x^+ - x), x'-x^+ \big\rangle \ge 0, \quad \forall x' \in X.
\end{eq}
Note that this reduces to the ``implicit gradient descent'' update~$x^{+} = x - \gam \nabla \phi(x^{+})$ in the unconstrained case.
%\begin{eq}
%\label{eq:prox-implicit}
%x^{+} = x - \gam \nabla \phi(x^{+}).
%% \lang x^{+} - x + \gam \nabla \phi(x^{+}), x' - x^+ \rang \ge 0, \quad \forall x' \in X,
%\end{eq}
% which reduces to the implicit update formula~$x^{+} = x - \gam \nabla \phi(x^{+})$ in the unconstrained case.
For large stepsize, computing the proximal operator at a point might be as hard as minimizing~$\phi$.  
However, with sufficient regularization, namely when~$\gam = c/L$ for~$0 < c \le 1/2$, the task becomes easy, since the objective in~\cref{def:prox-map} is strongly convex and well-conditioned, with %condition number 
%\begin{equation}
%\label{eq:kappabound}
$\kappa = (1+c)/(1-c) \le 3.$
%\end{equation}
On the other hand, with such stepsize the \textit{proximal point method}, as given by
\begin{eq}
\label{eq:prox-update}
x_{t} = x^+_{\gam \phi, X}(x_{t-1}),
\end{eq}
attains the optimal rate~$O(1/\sqrt{T})$ of minimizing the stationarity measure~$\Dx$ (cf.~\cref{def:stationarity}).
% For simplicity, consider the unconstrained case. Then, 
Indeed, from~\eqref{def:prox-map} with~$\gam = c/L$ we get
%when combined with the implicit update formula~\cref{eq:prox-implicit}, gives
\begin{eq}
\label{eq:prox-point-step-progress}
%\phi(x^+) + \frac{c}{2L}\|\nabla \phi(x^{+})\|^2 \le \phi(x).
\phi(x^+) + \frac{L}{2c}\|x^{+} - x\|^2 \le \phi(x).
\end{eq}
Iterating this~$T$ times according to~\eqref{eq:prox-update} results in
\begin{eq}
\label{eq:prox-point-argument}
%\min_{t \in [T]} \|\nabla \phi(x_t)  \| 
%\le \sqrt{\frac{1}{T} \sum_{t \in [T]} \|\nabla \phi(x_t) \|^2 } = \sqrt{\frac{2L \Gap}{cT}},
\min_{t \in [T]} \|x_t - x_{t-1}\|^2
\le \frac{1}{T} \sum_{t \in [T]} \| x_t - x_{t-1} \|^2 \le \frac{2c\Gap}{LT},
\end{eq}
where~$\Delta = \phi(x_0) - \textstyle\min_{x \in X}\phi(x)$ is the initial gap.
On the other hand, 
\begin{eq}
\label{eq:stationarity-by-closeness}
\begin{ald}
\Dx^2(x^+,\nabla \phi(x^+), L) 
&\equiv 2L \max_{x' \in X} \left[- \lang \nabla \phi(x^+), x' - x^+ \rang - \frac{L\|x' - x^+\|^2}{2} \right] \\
&\le 2L^2 \max_{x' \in X} \left[\frac{1}{c} \lang x^+ - x, x' - x^+ \rang - \frac{\|x'-x^+\|^2}{2} \right]
\le \frac{L^2 \|x^+ - x\|^2}{c^2},
\end{ald}
\end{eq}
where we first used the first-order optimality condition~\cref{eq:ppm-foo} and then Young's inequality; note that the last inequality becomes tight when~$X = \XX$. 
Combining~\cref{eq:prox-update},~\cref{eq:prox-point-argument} and~\cref{eq:stationarity-by-closeness}, we arrive at
\begin{eq}
\label{eq:intro-ppm-rate}
\min_{t \in [T]} \Dx(x_t,\nabla \phi(x_t), L)  \le \sqrt{\frac{2L\Gap}{cT}},
\end{eq}
i.e., the iteration complexity  
%\begin{eq}
%\label{eq:carmon-complexity}
$T(\veps) = O\left({L \Gap}/{\veps^2}\right)$
%\end{eq}
of minimizing the measure~$\Dx$, which is optimal in the unconstrained case~\cite{carmon2017lower}.

Of course, the above argument would be useless if~\cref{eq:prox-point-argument} or~\cref{eq:stationarity-by-closeness} were not tolerant to errors when computing~$x^+_{\gam\phi, X}(x)$, i.e., when minimizing the regularized function
\begin{eq}
\label{eq:ppm-regularized-function}
\phi_{L,x}(\cdot) := \phi(\cdot) + L\|\cdot - x\|^2.
\end{eq}
(Here we fixed~$c = 1/2$ for simplicity, i.e.,~$\gam = 1/(2L)$, cf.~\cref{def:prox-map}.)
\odima{We shall now verify such error-tolerance for~\cref{eq:prox-point-argument},~\cref{eq:stationarity-by-closeness} and~\cref{eq:intro-ppm-rate} as a result.}
%Indeed, let us first consider~\cref{eq:stationarity-by-closeness} and show that it is tolerant to the objective value error in~\cref{def:prox-map}.} 
%\begin{compactitem}
%\item 
Indeed, let~$\wt x^+ \in X$ satisfy
\begin{eq}
\label{eq:ppm-objective-approx}
\phi_{L,x}(\wt x^+) \le \phi_{L,x}(x^+) + \frac{\veps^2}{24L}
\end{eq}
for given~$x$, where~$x^+ = x^+_{\phi/2L, X}(x)$ is the true minimizer,~$\veps$ the desired accuracy, %in~\cref{eq:carmon-complexity}, 
and the constant~$1/24$ will be \odima{convenient in further calculations.}
Consider the counterpart of~\cref{eq:prox-update}, i.e., the sequence~$\wt x_t = \wt x^+_{\phi/2L,X}(\wt x_{t-1})$ obeing~\eqref{eq:ppm-objective-approx} at each step.
Using~\cref{eq:prox-point-step-progress} and proceeding as when deriving~\cref{eq:prox-point-argument}, we obtain
the following counterpart of~\cref{eq:prox-point-argument}:
\begin{eq}
\label{eq:prox-point-argument-approx}
\min_{t \in [T]} \|\wt x_t - \wt x_{t-1}\|^2
\le \frac{1}{T} \sum_{t \in [T]} \| \wt x_t - \wt x_{t-1} \|^2 \le \frac{\Gap}{LT} + \frac{\veps^2}{24L^2},
\end{eq}
\odima{thus showing the desired error-tolerance for~\cref{eq:prox-point-argument}.}
%\item 
Moreover, assume now that~$\wt x^+$, in addition to~\cref{eq:ppm-objective-approx}, admits the matching guarantee for the stationarity measure, that is
\begin{eq}
\label{eq:ppm-stationarity-approx}
\Dx(\wt x^+, \nabla \phi_{L,x}(\wt x^+), L) \le {\veps}/{2}.
\end{eq} 
Then \odima{we sequentially obtain}
\begin{eq}
\hspace{-0.13cm}
\begin{ald}
&\Dx^2(\wt x^+, \nabla \phi(\wt x^+), L) 
\equiv 2L \max_{x' \in X} \left[- \lang \nabla \phi(\wt x^+), x' - \wt x^+ \rang - \tfrac{L}{2} \|x' - \wt x^+\|^2 \right] \\
\le & 2L \max_{x' \in X} \left[- \lang \nabla \phi(\wt x^+) + 2L (\wt x^+ - x), x' - \wt x^+ \rang - \tfrac{L}{4} \|x' - \wt x^+\|^2 \right] \\
&\quad\quad\quad+ 2L \max_{x' \in X} \left[\lang 2L (\wt x^+ - x), x' - \wt x^+ \rang - \tfrac{L}{4} \|x' - \wt x^+\|^2 \right]\\
= & 2 \Dx^2(\wt x^+, \nabla \phi_{L,x}(\wt x^+), L/2) + 8L^2 \|\wt x^+ - x\|^2 \\
\le & 2 \Dx^2(\wt x^+, \nabla \phi_{L,x}(\wt x^+), L) + 8L^2 \|\wt x^+ - x\|^2 
= {\veps^2}/{2} + 8L^2 \|\wt x^+ - x\|^2;
\end{ald}
\label{eq:stationarity-by-closeness-approx}
\end{eq}
here we first used the explicit form of~$\nabla\phi_{L,x}$, then estimated the additional term via Young's inequality, and finally used that~$\Dx(x, \xi, L)$ is non-decreasing in~$L$, as follows from the proximal Polyak-Lojasiewicz lemma~(\cite[Lem.~1]{karimi2016linear}).\footnote{Namely, we apply~\cite[Lemma~1]{karimi2016linear} using the indicator of~$X$ as the proximal function~$g(x)$ there.}
\odima{Thus, we have just verified the required error-tolerance for~\cref{eq:stationarity-by-closeness}.}
Finally, by recalling~\cref{eq:prox-point-argument-approx} we arrive at
\begin{eq}
\label{eq:stationarity-guarantee-approx}
\min_{t \in [T]} \Dx(\wt x_t,\nabla \phi(\wt x_t), L)  \le \sqrt{\frac{8L\Gap}{T} + \frac{5\veps^2}{6}} \le 3\sqrt{\frac{L\Gap}{T}} + \veps,
\end{eq}
which results in the same complexity~$T(\veps) = O\left({L \Gap}/{\veps^2}\right)$
%~\cref{eq:carmon-complexity} 
as for the exact updates~\cref{eq:prox-update}.\\

It remains to notice that the point~$\bx^+$ satisfying~\cref{eq:ppm-objective-approx} and~\cref{eq:ppm-stationarity-approx} can be obtained by running FGM with restarts (\cref{alg:fgm-restart}) with a near-constant total number of oracle calls, since the function~$\phi_{L,x}$ minimized in~\cref{def:prox-map} is~$3L$-smooth and~$L$-strongly-convex. 
Namely, combining~\cref{cor:fgm-restart-bound} and~\cref{cor:unconstrained-case}, we obtain the following.
\begin{proposition}[Implementation of proximal point operator via FGM]
\label{prop:ppm-via-fgm}
Given some~$x \in X$, let~$\phi: X \to \R$ have~$L$-Lipschitz gradient, and let~$x^+ = x^+_{\phi/(2L),X}(x)$ be the minimizer of~$\phi_{L,x},$ cf.~\cref{eq:ppm-regularized-function}.
Let~$\wt x^+$ be the output of~\cref{alg:fgm-restart} run with exact oracle~$\nabla \phi_{L,x}(\cdot)$,~$z_0 = x,$~$Z = X$, and parameters
\begin{eq}
\label{eq:ppm-via-fgm-params}
T = 11, \quad \gam = \frac{1}{3L},  \;\; \text{and} \;\; S \ge \frac{1}{2}\log_2\left(\frac{72 L\Delta_{L,x}}{\veps^2} \right),
\end{eq}
where~$\Delta_{L,x} := \phi(x) - \textstyle\min_{x'}\phi_{L,x}(x')$. 
Then 
\begin{eq}
\label{eq:ppm-via-fgm-regularized-bounds}
\begin{ald}
\| \bx^+ - x^+ \| \le \frac{\veps}{6L}, \quad
\Dx(\bx^+, \nabla \phi_{L,x}(\bx^+), L) \le \frac{\veps}{2}, \quad
%\text{and} \quad 
\phi_{L,x}(\bx^+) - \phi_{L,x}(x^+) \le \frac{\veps^2}{24L}.
\end{ald}
\end{eq}
%Eqs.~\eqref{eq:fgm-approximation-bounds-final}--\eqref{eq:ppm-via-fgm-step-progress} hold, and
%\begin{eq}
%\label{eq:ppm-via-fgm-regularized-fval}
%\phi_{L,x}(\bx^+) - \phi_{L,x}(x^+) \le \frac{\veps^2}{6L}.
%\end{eq}
\end{proposition}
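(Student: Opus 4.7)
My plan is to reduce the claim directly to \cref{cor:unconstrained-case} applied to the auxiliary function $\phi_{L,x}$, after computing its smoothness and strong-convexity parameters, and then to translate the three resulting inequalities into~\eqref{eq:ppm-via-fgm-regularized-bounds} via a carefully chosen internal target accuracy.

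First, since $\phi$ has $L$-Lipschitz gradient, the regularized function $\phi_{L,x}(\cdot) = \phi(\cdot) + L\|\cdot - x\|^2$ from~\eqref{eq:ppm-regularized-function} has $3L$-Lipschitz gradient (by the triangle inequality, since the quadratic contributes a $2L$-Lipschitz gradient) and is $L$-strongly convex (its Hessian is lower-bounded by $-LI + 2LI = LI$). Thus the effective condition number is $\kappa = 3$, so the prescribed $T = 11 \ge \sqrt{120} = \sqrt{40\kappa}$ satisfies the iteration-count requirement of \cref{cor:fgm-restart-bound}, and the stepsize $\gam = 1/(3L)$ matches the ``$1/L_0$'' convention for the smoothness constant $L_0 = 3L$.

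Next, I would observe that the initial function gap equals $\Delta_{L,x}$: the algorithm is initialized at $z_0 = x$, so $\phi_{L,x}(z_0) = \phi(x)$ and hence $\phi_{L,x}(z_0) - \min_{x'}\phi_{L,x}(x') = \Delta_{L,x}$ by definition. With the exact oracle $\nabla\phi_{L,x}(\cdot)$ available, \cref{cor:unconstrained-case} applies. To make the three numerical constants in~\eqref{eq:ppm-via-fgm-regularized-bounds} come out as stated, I would invoke that corollary with internal target accuracy $\veps^\star := 3\veps/2$. The epoch-count requirement then becomes $S \ge \tfrac{1}{2}\log_2(18\kappa L_0 \Delta_{L,x}/\veps^{\star 2}) = \tfrac{1}{2}\log_2(162 L \Delta_{L,x} \cdot 4/(9\veps^2)) = \tfrac{1}{2}\log_2(72 L \Delta_{L,x}/\veps^2)$, matching~\eqref{eq:ppm-via-fgm-params} exactly.

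Finally, I would translate the three conclusions of \cref{cor:fgm-restart-bound} (with $L$ there replaced by $L_0 = 3L$ and target accuracy $\veps^\star = 3\veps/2$) into the claimed bounds. The distance bound $\|\bx^+ - x^+\| \le \veps^\star/(3L_0) = \veps/(6L)$ and the objective-gap bound $\phi_{L,x}(\bx^+) - \phi_{L,x}(x^+) \le \veps^{\star 2}/(18L_0) = \veps^2/(24L)$ follow by direct substitution. For the stationarity bound, the corollary delivers $\Dx(\bx^+, \nabla\phi_{L,x}(\bx^+), 3L) \le \veps^\star/3 = \veps/2$; the claim requires replacing $3L$ by $L$ in the third argument, which is permissible since $\Dx(z,\zeta,\cdot)$ is non-decreasing in its last argument, as already invoked in~\eqref{eq:stationarity-by-closeness-approx} via the proximal Polyak--Lojasiewicz lemma. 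The principal obstacle, modest as it is, is precisely this bookkeeping: tracking the ratio $L_0/L = 3$ and calibrating $\veps^\star$ so that all three inequalities emerge with exactly the constants stated.
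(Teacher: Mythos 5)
Your proof matches the paper's almost line for line: both compute that $\phi_{L,x}$ is $3L$-smooth and $L$-strongly convex with $\kappa\le 3$, verify $T=11>\sqrt{40\kappa}$, apply \cref{cor:fgm-restart-bound,cor:unconstrained-case} with the substitutions $L\mapsto 3L$, $\veps\mapsto 3\veps/2$ (producing exactly the stated $S$ requirement), and finish with the monotonicity $\Dx(\cdot,\cdot,L)\le\Dx(\cdot,\cdot,3L)$. The only difference is cosmetic — you name your internal target $\veps^\star$ explicitly — and your arithmetic for all three conclusions is correct.
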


\begin{proof}
%\proofstep{1}
Note that~$\phi_{L,x}(\cdot)$ is~$3L$-smooth, has condition number~$\kappa \le 3$, and is~$\Gap_{L,x}$-suboptimal at~$x$.
Hence,~\cref{alg:fgm-restart} run with~$T = 11 > \sqrt{40 \kappa}$ and %inner-loop iterations and 
\[
S \ge \frac{1}{2} \log_2 \left( \frac{72 L \Gap_{L,x}}{\veps^2}\right) \ge \frac{1}{2} \log_2 \left( \frac{18 \kappa (3L) \Gap_{L,x}}{(3\veps/2)^2}\right),
\]
%restarts, 
cf.~\cref{eq:restarts-from-gap}, outputs a point for which~\cref{eq:fgm-approximation-bounds} holds with the following replacements:
\[
z^S \mapsto \bx^+, \; z^* \mapsto x^+, \; f(\cdot) \mapsto \phi_{L,x}(\cdot), \; L \mapsto 3L, \; \veps \mapsto \frac{3\veps}{2}.
\] 
Using that~$\Dx(x,\xi,L) \le \Dx(x,\xi,3L)$, we verify all three inequalities in~\cref{eq:ppm-via-fgm-regularized-bounds}.
%\[
%\| \bx^+ - x^+ \| \le \frac{3\veps}{3(3L)} \le \frac{\veps}{3L},  \quad
%\| \nabla \phi(\bx^+) - \nabla \phi(x^+) \| \le \frac{\veps}{3},  \quad
%\phi_{L,x}(\bx^+) - \phi_{L,x}(x^+) \le \frac{\veps^2}{6L}.
%\] 
%where the first bound is due to~\cref{eq:fgm-approximation-bounds}, and the next two bounds are by smoothness of~$\phi$ and~$\phi_{L,x}$. 
\end{proof}

% \begin{rem}
% \label{rem:gaps-comparison}
% Since we have~$\phi_{L,x}(x') \ge \phi(x')$ for all~$x'$ with equality at~$x' = x$, the quantity~$\Gap_{L,x}$ is also an upper bound for the gap~$\phi(x) - \textstyle_{x' \in \XX} \min_{x'}\phi(x')$ as per~$\phi$. We will not use this fact.
% \end{rem}

%\subsubsection{\odima{TBD:} Implementation via multi-step extragradient}
%Complexity is the same (logarithmic extrapolation horizon). I don't expect it to work with just 2 iterations (i.e., the usual Mirror-Prox), as Arkadi's proof of this result relies on bounding the gap, i.e., requires convexity. On the other hand, smoothness of~$g$ suffices for the proximal operator to be a contraction anyway.
%
%Advantages of this approach:
%\begin{itemize}
%\item Easily extended to non-euclidean geometry -- in contrast to the linear rate for FGM (maybe this one is also extended by restarts or such, but such a result is unknown to me). 
%\item Even in the euclidean case, it gives an simpler algorithm. as in this case FGM becomes a mess to code (talking from my own experience here).
%Also, not sure about how FGM behaves in the stochastic case...
%\item Finally, people in the community might be happy to see the connection.
%\end{itemize}

\section{Algorithm and main result}
\label{sec:algorithm}
%Let us revisit the original problem~\eqref{opt:min-max}. 
\begin{comment}
\paragraph{Recap.}
\odima{AL: Nice reminder, but not absolutely necessary to include since htese assumptions are stated earlier. Could cut it out to save space}
Before giving the outline of our approach, let us recap the problem formulation. 
We assume that~$F(x,y)$ is the objective of the nonconvex-concave min-max problem~\cref{opt:min-max} with ``prox-friendly'' feasible sets~$X$ and~$Y$, where~$Y$ is contained in a ball with radius~$\Ry$. 
% and has an~$(\ex,\ey)$-FNE~$(x^*,y^*)$ 
Our goal is to find~$(\ex,\ey)$-FNE %~$(\wh x, \wh y)$ 
in a small number of queries of~$\nabla F(\cdot,\cdot)$ and projections on~$X,Y$.
We assume to be given initial point~$x_0 \in X$, fix arbitrary~$\by \in Y$, and use that~$\|y - \by\| \le 2\Ry$ for any~$y \in Y$. 
\end{comment}

\odima{In order to better convey the ideas behind our approach, we shall present it in a similar manner as in~\cref{sec:ppm-via-fgm}.
Namely, we shall first present the ``conceptual'' algorithm with {\em exact} proximal-point type updates,
and then show how to approximate these updates, which shall result in our final algorithm.}

\subsection{Conceptual algorithm: primal-dual proximal point iteration}
\label{sec:algo-outline}

%\footnotemark\footnotetext{We do not require the \textit{exact} FNE to exist, in order to be able to consider criteria with non-compact level sets.}
%Choose~$\by$ within the distance~$\Ry$ from~$y^*$. 
First, following~\cite{nesterov2012howto}, we reduce the problem of finding~$(\ex, \ey)$-FNE in~\cref{opt:min-max} to the problem of finding approximate FNE of the regularized function
\begin{eq}
\label{eq:regularized-surrogate}
\bF(x,y) := F(x,y) - \frac{\ey}{2\Ry} \|y - \by\|^2.
\end{eq}
%Essentially, one can easily show that any~$(\ex, \ey)$-FNE of~$F$ is an~$(\ex,3\ey)$-FNE for~$\bF$.
%Moreover, while both~$F$ and~$\bF$ might have no stationary points, function~$\bF(x,\cdot)$
This function has a unique maximizer for any~$x \in X$ as it is~$\ey/\Ry$-strongly concave. This strong concavity will help us obtain faster algorithms for finding $(\ex, \ey)$-FNE when applying standard accelerated procedures.

%However, we will use a more ``aggressive''~$O(\sqrt{\ey})$-regularization, working instead with the function
%\begin{eq}
%\label{eq:regularized-surrogate}
%\bF(x,y) := F(x,y) - \frac{1}{2}\sqrt{\frac{\ey\Lyy}{\Ry}} \|y - \by\|^2.
%\end{eq}
The crux of our approach is to run a version of primal-dual proximal-point method, choosing the next iterate~$(x_{t}, y_{t})$ as an approximate optimal solution to the convex-concave saddle-point problem (with unique exact solution):
\begin{eq}
\label{eq:minmax-conceptual-updates}
\begin{ald}
\min_{x \in X} \max_{y \in Y}  \left[ \bF_{t}(x,y) := \bF(x,y) + \Lxx \|x - x_{t-1}\|^2 \right].
%\min_{x \in X} \max_{y \in Y}  \left\{ \bF_{t}(x,y) := \bF(x,y) + \frac{1}{2\gamx} \|x - x_{t-1}\|^2 \right\} \;\; \text{with} \;\; \gamx = \frac{1}{2\Lxx}. 
\end{ald}
\end{eq}
To illustrate this idea, let us consider the idealized iterates~$(\wh x_t, \wh y_t)$ corresponding to the exact saddle point in~\cref{eq:minmax-conceptual-updates}, which exists and is unique by Sion's minimax theorem~\cite{sion1958}.
By definition, we have
%~$(\wh x_t, \wh y_t)$ and~$(\wh x_t, \wh y_t)$ are saddle points for~$\bF_{t}(x,y)$ and~$\bF_{t-1}(x,y)$ correspondingly 
\begin{eq}
\label{eq:idea-saddle-point}
\begin{ald}
\bF_t(\wh x_{t}, \wh y_{t+1})  \le \bF_t(\wh x_{t}, \wh y_{t}) \le \bF_t(\wh x_{t-1}, \wh y_{t}).
%\quad \text{and} \quad \bF_{t-1}(x_{t-1},y_{t}) \le \bF_{t-1}(x_{t-1},y_{t-1}).
\end{ald}
\end{eq}
%holds for any~$t > 1$; similarly, at the previous step we have
%\begin{eq}
%\label{eq:idea-saddle-point-prev}
%\bF_{t-1}(\wh x_{t-1}, \wh y_{t})  \le \bF_{t-1}(\wh x_{t-1}, \wh y_{t-1}) \left[ \le \bF_{t-1}(\wh x_{t-2}, \wh y_{t-1}) \right].
%\end{eq}
%Invoking~\cref{eq:regularized-surrogate}, and replacing~$x_{t-1}$ in~\cref{eq:minmax-conceptual-updates} with~$\wh x_{t-1}$, the right-hand side of~\cref{eq:idea-saddle-point} becomes
Using the expression for~$\bF_t(x,y)$ in~\cref{eq:minmax-conceptual-updates}, the \odima{right-hand inequality} in~\cref{eq:idea-saddle-point} reads
\begin{eq}
\label{eq:ppm-primal-dual-simple-half}
\bF(\wh x_{t}, \wh y_{t}) + \Lxx \|\wh x_{t} - \wh x_{t-1}\|^2 \le \bF(\wh x_{t-1}, \wh y_{t}).
\end{eq}
%Using that~$\gx \bF(x, y) \equiv \gx F(x, y)$, the primal first-order optimality condition for~\cref{eq:minmax-conceptual-updates} reads
%\[
%\wh x_t = \wh x_{t-1} - \gamx \gx F(\wh x_t, \wh y_t),
%\]
%which mimics~\cref{eq:prox-implicit}; plugging this back into the previous inequality, we arrive at
%\begin{eq}
%\label{eq:ppm-primal-dual-simple-half}
%\bF(\wh x_{t}, \wh y_{t}) + \frac{1}{4\Lxx} \| \gx F(\wh x_t, \wh y_t)\|^2 \le \bF(\wh x_{t-1}, \wh y_{t}).
%\end{eq}
\odima{Meanwhile, the first inequality in~\cref{eq:idea-saddle-point} implies that~$\bF(\wh x_{t}, \wh y_{t}) \ge \bF(\wh x_{t}, \wh y_{t+1})$.}
%and since~$\bF_t(x, y) - \bF_t(x, y') \equiv \bF(x, y) - \bF(x, y')$ for any~$x \in X$ and~$y,y' \in Y$, we have~$\bF(\wh x_{t}, \wh y_{t}) - \bF(\wh x_{t}, \wh y_{t+1}) \ge 0$.
Thus we arrive at
\begin{eq}
\label{eq:ppm-primal-dual-simple}
\bF(\wh x_{t}, \wh y_{t+1}) + \Lxx \|\wh x_{t} - \wh x_{t-1}\|^2 \le \bF(\wh x_{t-1}, \wh y_{t}).
\end{eq}
\odima{
The point here is that, unlike~\cref{eq:ppm-primal-dual-simple-half}, relation~\cref{eq:ppm-primal-dual-simple} can now be {iterated}, as the index gets shifted in the left-hand side for {\em both} variables.
Iterating~\cref{eq:ppm-primal-dual-simple} results in a similar argument as in~\cref{eq:prox-point-step-progress}-\cref{eq:intro-ppm-rate} and gives the~$\Tx$ complexity factor.
More precisely, applying~\cref{eq:ppm-primal-dual-simple} for~$t \in [T-1]$ and~\cref{eq:ppm-primal-dual-simple-half} at~$t = T$, we arrive at an analogue of~\cref{eq:prox-point-argument}:} 
%(modulo the shift of index in~$y$ at the last iteration)
\[
\min_{t \in [T]} \|\wh x_{t} - \wh x_{t-1}\|^2
\le \frac{1}{T} \sum_{t \in [T]} \|\wh x_{t} - \wh x_{t-1}\|^2 \odima{\le} \frac{\bF(\wh x_0, \wh y_1) - \bF(\wh x_T, \wh y_T)}{\Lxx T}.
\]
Now observe that we can relate~$\bF(\wh x_0, \wh y_1) - \bF(\wh x_T, \wh y_T)$ to~$\Gap = \vphi(\wh x_0) - \textstyle \min_{x \in X} \vphi(x)$: 
\begin{eq}
\label{eq:gap-estimate}
\begin{ald}
\bF(\wh x_0, \wh y_1) &\le F(\wh x_0, \wh y_1) \le \textstyle\max_{y \in Y} F(\wh x_0, y) = \vphi(\wh x_0), \\
\bF(\wh x_T, \wh y_T) %&= \max_{y \in Y} \bF(\wh x_T, y) 
&\ge  \max_{y \in Y} F(\wh x_T,y) - \frac{\ey}{2\Ry} \max_{y' \in Y} \|y' - \by\|^2  
\ge \min_{x \in X} \vphi(x) - 2\ey \Ry.
\end{ald}
\end{eq}
%Neglecting the additive to~$\Gap$ term~$2\ey \Ry$, 
Thus we can guarantee the existence of~$\tau \in [T]$ for which
%\begin{eq}
$
%\label{eq:prox-point-argument-outline}
\|\wh x_{\tau} - \wh x_{\tau-1} \|^2  \odima{\le} \frac{\Gap + 2\ey\Ry}{\Lxx T},
$
%\end{eq}
mimicking~\cref{eq:prox-point-argument} up to~$O(\ey)$ additive error. 
Now we can proceed as in~\cref{eq:stationarity-by-closeness}, using the primal optimality condition in~\cref{eq:minmax-conceptual-updates} and~$\gx \bF(x,y) \equiv \gx F(x,y)$. This results in
\[
\Dx(\wh x_\tau, \gx F(\wh x_\tau, \wh y_{\tau}), \Lxx) \le  2\sqrt{\frac{\Lxx(\Gap + 2\ey\Ry)}{T}},
\]
corresponding to~$O(\Tx)$ iterations~\cref{eq:complexity-factors} to ensure~$\Dx(\wh x_\tau, \gx F(\wh x_\tau, \wh y_{\tau}), \Lxx) \le \ex$. 
Meanwhile, we remain near-stationary in~$y$: indeed, for any iteration~$t \in [\Tx]$ we have that
%including~$\tau$, 
\small
\[
\begin{ald}
&\Dy^2(\wh y_t, - \gy F(\wh x_t, \wh y_t), \Lyy) 
= 2\Lyy \max_{y \in Y} \left[\lang \gy F(\wh x_{t}, \wh y_{t}), y - \wh y_{t} \rang  - \frac{\Lyy}{2}\|y-\wh y_t\|^2 \right] \\
\le & 2\Lyy \max_{y \in Y} \left[\lang \frac{\ey}{\Ry}(\wh y_{t}-\by), y - \wh y_{t} \rang - \frac{\Lyy}{2}\|y-\wh y_t\|^2\right] \\
&\quad\quad\quad\quad 
	+ 2\Lyy \max_{y \in Y} \left[\lang \gy F(\wh x_{t}, \wh y_{t}) - \frac{\ey}{\Ry}(\wh y_{t} - \by), y - \wh y_{t} \rang \right]\\
\le & 2\Lyy \max_{y \in Y} \left[\lang \frac{\ey}{\Ry}(\wh y_{t}-\by), y - \wh y_{t} \rang - \frac{\Lyy}{2}\|y-\wh y_t\|^2\right] = \frac{\ey^2}{\Ry^2} \|\wh y_t - \by\|^2 \le {4\ey^2},
\end{ald}
\]
\normalsize
where in the second inequality we used the dual optimality condition for~\cref{eq:minmax-conceptual-updates}, and then used Young's inequality.
Thus,~$(\wh x_\tau, \wh y_\tau)$ is an~$(\ex,O(\ey))$-FNE. %in~\cref{opt:min-max}. 
%\begin{comment}
%\[
%\begin{ald}
%\left\|\wh y_\tau - \prox_{\wh y_\tau,Y} \left(-\frac{\gy F(\wh x_{\tau},\wh y_{\tau})}{\Lyy} \right) \right\|
%&= %\left\|\wh y_\tau - \prox_{\wh y_\tau,Y} \left(-\frac{\gy \bF(\wh x_{\tau},\wh y_{\tau})}{\Lyy} \right) \right\|  + 
%		\left\|\prox_{\wh y_\tau,Y} \left(-\frac{\gy \bF(\wh x_{\tau},\wh y_{\tau})}{\Lyy} \right) - \prox_{\wh y_\tau,Y} \left(-\frac{\gy F(\wh x_{\tau},\wh y_{\tau})}{\Lyy} \right) \right\| \\
%&\le \frac{1}{\Lyy} \left\|\gy F(\wh x_{\tau},\wh y_{\tau}) - \gy \bF(\wh x_{\tau},\wh y_{\tau})\right\| \le \frac{2\ey}{\Lyy} \\
%\end{ald}
%\]
%where the equality is by the optimality condition in~\cref{eq:minmax-conceptual-updates}; thus,~$(\wh x_\tau, \wh y_\tau)$ is an~$(\ex,O(\ey))$-FNE. %in~\cref{opt:min-max}. 
%\end{comment}

So far  we assumed the update~\eqref{eq:minmax-conceptual-updates} can be done exactly and analyzed the iteration complexity of the resulting idealized procedure.  Next we show how to approximate~\eqref{eq:minmax-conceptual-updates} via~\cref{alg:fgm-restart}, leading to our final algorithm and its efficiency estimate.

\subsection{Implementation of conceptual algorithm}
\label{sec:implementation}

As in the case of the usual proximal point method, the update stemming from the auxilliary min-max problem in~\cref{eq:minmax-conceptual-updates} cannot be performed exactly. 
To address this problem, we extend the approach described in~\cref{sec:ppm-via-fgm} and approximately solve the (primal) minimization problem in~\cref{eq:minmax-conceptual-updates} up to~$O(\ex)$ accuracy in the~$\Dx$-measure via~\cref{alg:fgm-restart} (cf.~\cref{prop:ppm-via-fgm}).
The key challenge here is that the function to minimize in~\cref{eq:minmax-conceptual-updates} stems from the nested maximization problem, hence neither it nor its gradient can be computed exactly. 
Instead, we provide \textit{inexact oracle} for this function through the following steps.
%We address this challenge as follows. 

\vspace{-0.2cm}
\begin{algorithm}[H]%[H]
\caption{Solve Regularized Dual Problem}
\label{alg:grad-approx}
\begin{algorithmic}[1]
\Function{\AppGrad}{$y,x_{t-1},\by,\gamx,\lamy,T,S$} %. %[\xx,\Gy]
%\label{alg:app-grad}
%\caption{\AppGrad \hfill {\em \# approximate gradient of prox-type function}}
%\Require{query point~$y$,~prox-center~$\bx$,~accuracy~$\delta$,~$S^o$,$\Lx$}
\State \hspace{-0.3cm} $\wt x_t(y) = \, \FGMR(x_{t-1},X,\tfrac{2}{3}\gamx,T,S, %...$ \\
% \hfill $
\gx F(\cdot,y) - \tfrac{1}{\gamx}(\cdot - x_{t-1}))$
%g_t(\cdot)) \text{with}~$g_t(x) := \gx F(x,y) - \tfrac{1}{\gamx}(x - x_{t-1})$
 %\Comment{\textit{proximal point update from~$x$}}
\State \hspace{-0.3cm} $\wt \nabla \psi_t(y) = \gy F(\wt x_t(y),y) - \lamy(y-\by)$\\
\Return $\wt x_t(y), \wt \nabla \psi_t(y)$
\EndFunction
\end{algorithmic}
\end{algorithm}\vspace{-0.4cm}

%\begin{itemize}
%\item
First, given the current primal iterate~$x_{t-1}$, consider the minimization problem corresponding to the dual function of~\cref{eq:minmax-conceptual-updates} evaluated at some fixed~$y \in Y$:
\[
\psi_t(y) := \min_{x \in X} \left[ \bF_t(x,y)  := \bF(x,y) + \Lxx \|x - x_{t-1}\|^2 \right].
\] 
Solving this minimization problem for fixed~$y\in Y$ by running~\cref{alg:fgm-restart} with exact oracle~$\gx F(\cdot,y) + 2\Lxx(\cdot - x_{t-1})$, we obtain approximation~$\wt x_t(y)$ of the exact minimizer~$\wh x_t(y)$.
As~$F_t(\cdot,y)$ is well-conditioned, it only takes a logarithmic number of oracle calls to ensure a very small (inversely polynomial in the problem parameters) error of approximating~$\wh x_t(y)$. On the other hand, a version of Danskin's theorem (\cite[Lem.~24]{nouiehed2019solving}) guarantees that the gradient of~$\psi_t(y)$, given by
\begin{eq}
\label{eq:psi-gradient}
\nabla \psi_t(y) \equiv \py \bF(\wh x_t(y), y),
\end{eq}
is~$O(\Lyy^+)$-Lipschitz.
Hence,~$\wt x_t(y)$ provides a~$\delta$-inexact oracle for~$\psi_t(y)$:\vspace{-0.15cm}
\begin{eq}
\label{eq:psi-inexact-oracle}
\wt \psi_t(y) := \bF(\wt x_t(y), y),  \quad 
\wt \nabla \psi_t(y) := \py \bF(\wt x_t(y),y),
\vspace{-0.1cm}
\end{eq} 
cf.~\cref{def:inexact-oracle}, where the accuracy parameter~$\delta$ can be arbitrarily chosen.
%For convenience, 
For convenience, we outline the subroutine that returns~$\wt x_t(y)$ and the approximate dual gradient~$\wt \nabla \psi_t(y)$ in~\cref{alg:grad-approx}.
%where~$\delta$ can be chosen in advance to satisfy the condition in~\cref{eq:fgm-oracle-accuracy-bound} of~\cref{th:devolder}.
%\item 
Now, observe that we can switch the order of~$\min$ and~$\max$ in~\cref{eq:minmax-conceptual-updates}, recasting it as
$
y_t = \arg\max_{y \in Y} \psi_t(y),$ and
$x_t = \wh x_t(y_t).$
% \begin{eq}
% \label{eq:minmax-ideal-updates}
% \begin{ald}
% y_t &= \arg\max_{y \in Y} \psi_t(y), \quad % = \bF(\wh x_t(y), y) \right\},\\
% x_t = \wh x_t(y_t).
% \end{ald}
%\end{eq}
Naturally, we replace those with the approximate updates given by
\begin{eq}
\label{eq:minmax-approx-updates}
\begin{ald}
y_t &\approx \arg\max_{y \in Y} \psi_t(y), \quad %= \bF(\wt x_t(y), y) \right\},\\
x_t = \wt x_t(y_t),
\end{ald}
\vspace{-0.2cm}
\end{eq}
maximizing~$\psi_t(y)$ by running Algorithm~\ref{alg:fgm-restart} with inexact gradient~$-\wt \nabla \psi_t(y)$ defined in~\cref{eq:psi-inexact-oracle}, {\em and without using~$\wt \psi_t(y)$}. Since~$\psi_t(y)$ is~$\Lyy^+$-smooth and~$(\ey/\Ry)$-strongly concave, in~$O(\Ty)$ calls of the inexact oracle~$-\wt \nabla \psi_t(\cdot)$~\cref{alg:fgm-restart} finds~$O(\ey)$-approximate maximizer~$y_t$ of~$\psi_t$, ensuring that\vspace{-0.2cm}
\[
\vspace{-0.1cm}
%\Lyy^+ \left\| y_t - \prox_{y_t,Y}\left(-\frac{1}{\Lyy^+} \nabla \psi_t(y_t) \right) \right\| \le {\ey}, 
\Dy(y_t, -\nabla \psi_t(y_t), \Lyy^+) \le  \frac{\ey}{3},
\quad\quad  \max_{y \in Y} \psi_t(y) - \psi_t(y_t) \le \frac{\ey^2}{18\Lyy^+}.
\]
Combining the first of these inequalities with~\cref{eq:psi-gradient} and recalling that~$\wt x_t(y_t) \approx \wh x_t(y_t)$ with very high accuracy, we ensure that~$(x_t,y_t)$ obtained via~\cref{eq:minmax-approx-updates} is~$O(\ey)$-stationary in~$y$ (in the sense of~\cref{def:stationarity}). 
As this must be repeated for~$t \in [\Tx]$, we recover the first term in~\cref{eq:complexity-simple}. 
On the other hand, the second inequality leads to the extra~$O(\ey^2/\Lyy^+)$ error in the saddle point relation~\cref{eq:idea-saddle-point}, whereas, as we know from~\cref{prop:ppm-via-fgm}, this error must be~$O(\ex^2/\Lxx)$ in order to preserve the argument in~\cref{sec:algo-outline}. 
This is easy to fix: it suffices to perform a logarithmic in~$\Tx$ number of {additional} restarts when maximizing~$\psi_t(y)$ (cf.~\cref{eq:ppm-via-fgm-params}).
Thus, the argument in~\cref{sec:algo-outline} remains valid, and we find~$(\ex,O(\ey))$-FNE in~\cref{opt:min-max} in~$\wt O(\Tx \Ty)$ gradient computations and projections.
% increasing the number of inner loop iterations (when maximizing~$\psi_t(y)$) to~$\max(\Ty + O(\Tx^{1/4}))$, which gives the second term in~\cref{eq:complexity-simple} \mrcomment{Is this discussion correct? It seems we got rid of $T_x^{1/4}$}.
%\end{itemize}
The resulting algorithm, our main practical contribution, is given in~\cref{alg:fsp}. 
%\odima{Comment on input params}
%
\vspace{-0.2cm}
\begin{algorithm}[H]%[t!]
\caption{FNE Search in Nonconvex-Concave Smooth Min-Max Problem}
\label{alg:fsp}
\begin{algorithmic}[1]
\Require~$\nabla F(\cdot,\cdot)$,~$Y$,~$x_0$,~$\by \in Y$,~$\bTx$,~$\bTy$,~$\Sy$,~$\gamx$,~$\gamy$,~$\lamy$,~$\To$,~$\So$ 
%~$\Lxx$,~$\Lyy$,~$\Lxy$,~$\Lyx$,~$\Delta$,~$\Rx$,~$\Ry$ %\quad\quad
%\eqref{eq:approximation-accuracy-grads}--\eqref{eq:approximation-accuracy-points}
\For{$t \in [\bTx]$} \Comment{Using~\cref{alg:fgm-restart,alg:grad-approx} as subroutines}
\State $y_{t} = \; \FGMR (\by, Y, \gamy, \bTy, \Sy, -\wt\nabla\psi_t(\cdot))$ 
\label{line:dual-update}
\State \quad \quad with~$\wt\nabla\psi_t(y)$ returned by~$\AppGrad(y,x_{t-1},\by,\gamx,\lamy,\To,\So)$
\label{line:call-approx-grad}
 %\Comment{\textit{maximization in~$y$}}
\State $x_{t} = \wt x_t(y_t)$ returned by~$\AppGrad(y_t,x_{t-1},\by,\gamx,\lamy,\To,\So)$ %\Comment{\textit{proximal point update from~$x_{t-1}$}}
\label{line:prim-update}
\EndFor \\
\Return $(x_{\tau}, y_{\tau})$ \text{with}~$\tau \in \textstyle\Argmin_{t \in [\bTx]} \|\gx F(x_t, y_t)\|$
\end{algorithmic}
%\noindent
\end{algorithm}\vspace{-0.5cm}

\subsection{Convergence guarantee for~\cref{alg:fsp}}
\label{sec:upper-bound}
%Next 
We state our main result. %, the full version of~\cref{th:upper-bound-simple}. 
%Its proof 

\begin{theorem}
\label{th:upper-bound}
%Assume that
%\begin{align}
%\label{eq:assumption-gap}
%\ey\Ry &\le \Lyy \Ry^2 \le \Gap,\\
%\label{eq:assumption-acc}
%\ey \Ry &\le \frac{\ex^2 \Lyy}{\Lxx \Lyy + \Lxy^2},
%\end{align}
%and define
%\begin{eq}
%\label{eq:worst-error}
%\quad \bey = \min(\ey,???) \odima{???}
%\end{eq}
Define $\lamy := \frac{\bey}{\Ry}, \quad \Theta := \Lyy \Ry^2, \quad \Theta^+ := \Lyy^+ \Ry^2,$ and
\begin{align}
\label{eq:delta-value}
\delta := \min\left[8\bey\Ry, \; \frac{\Theta}{2\bTy^{3}}, \; \sqrt{\frac{\Gap (\Theta^+ - \Theta)}{\bTx^{\vphantom 2} \bTy^2}}\right].
\end{align}
Let us run~\cref{alg:fsp} with
\vspace{-0.3cm}
\begin{equation}
\gamx = \frac{1}{2\Lxx}, \;\; \gamy = \frac{1}{\Lyy^+ + \lamy},
\label{eq:stepsize-params}
\end{equation}
\vspace{-0.4cm}
\begin{equation}
%\hspace{1.2cm}
\bTx \ge \frac{10\Lxx(\Delta+2\bey\Ry)}{\ex^2}, \;\;
\bTy \ge \sqrt{\frac{40(\Lyy^+ + \lamy)}{\lamy}}, \;\;
%\label{eq:outer-loop-params} 
\Sy \ge 2\log_2 \left( \max\left[\bTy, \frac{\Theta^+}{\delta}\right] \right),
\label{eq:outer-loop-params}
\end{equation}
\vspace{-0.4cm}
\begin{equation}
\To = 11, \quad 
\So \ge \frac{1}{2}\log_2\left( 72 (3\Gap + 2\Theta + 6\bey \Ry) \left[ \frac{\Lxx}{\ex^2} + \frac{2\Theta^+}{\delta^2} + \frac{1}{12\delta} \right]\right).
\label{eq:inner-loop-params}
\end{equation}
Its output is~$(2\ex,5\ey)$-FNE in the problem~\cref{opt:min-max}, in the sense of~\cref{def:stationarity}, in
%\begin{eq}
%\label{eq:complexity-upper-bound}
$\left\lceil \To \So  \Sy \bTx \bTy \right\rceil$
%\end{eq}
computations of~$\nabla F(x,y)$ and twice that many projections onto~$X$~and~$Y$. %in total.
\end{theorem}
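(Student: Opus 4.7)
The plan is to follow the idealized argument outlined in \cref{sec:algo-outline,sec:implementation}, replacing each exact update by its approximate counterpart from \cref{alg:fsp} and carefully tracking the resulting errors. The analysis decouples into four stages: (i)~bounding the accuracy of the inner \AppGrad{} subroutine; (ii)~promoting this bound into a $\delta$-inexact oracle for the dual function $\psi_t$; (iii)~invoking the \FGMR{} guarantee on the outer loop, which produces $y_t$ and certifies $\ey$-stationarity of every $(x_t, y_t)$ in the $y$-variable; (iv)~telescoping a perturbed proximal-point inequality in the primal to extract $\ex$-stationarity at a well-chosen index~$\tau$.

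For stage~(i), for any fixed $y \in Y$ the subroutine \AppGrad{} applies \FGMR{} to the problem $\min_{x \in X}[\bF(x, y) + \Lxx \|x - x_{t-1}\|^2]$, whose objective is $\Lxx$-strongly convex with $3\Lxx$-Lipschitz gradient (condition number at most~$3$). Choosing $\To = 11$ and $\So$ as in \cref{eq:inner-loop-params}, \cref{prop:ppm-via-fgm} yields $\|\wt x_t(y) - \wh x_t(y)\| \le O(\delta/\Lxx)$ together with the matching $O(\delta)$ objective-gap and $\Dx$-stationarity bounds. Here the required upper bound on the initial primal gap follows from $|\bF| \le \vphi + O(\ey \Ry)$ combined with $\Gap$, which is how the $(3\Gap + 2\Theta + 6\ey\Ry)$ factor in $\So$ arises. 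For stage~(ii), Danskin's theorem gives $\nabla \psi_t(y) = \py \bF(\wh x_t(y), y)$, and $\psi_t$ is $\lamy$-strongly concave and $\Lyy^+$-smooth; the Lipschitz bounds \cref{ass:smoothness} then promote the pointwise accuracy of $\wt x_t(y)$ into a $\delta$-inexact oracle for $\psi_t$ in the sense of \cref{def:inexact-oracle}, with $\delta$ as in \cref{eq:delta-value}. The three clauses of $\delta$ serve, respectively, to (a)~prevent $y$-stationarity from blowing up in stage~(iii), (b)~satisfy the oracle-accuracy condition $\delta \le \delta_{\bTy}$ of \cref{cor:fgm-restart-bound}, and (c)~keep the cumulative saddle-point slackness dominated by the main term in stage~(iv).

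For stage~(iii), line~\ref{line:dual-update} is \FGMR{} applied to maximize $\psi_t$ on $Y$ with the $\delta$-inexact oracle above; the condition number is $(\Lyy^+ + \lamy)/\lamy$, so the choices of $\bTy$ and $\Sy$ in \cref{eq:outer-loop-params} meet the hypotheses of \cref{cor:fgm-restart-bound}, yielding simultaneously $\|y_t - \wh y_t\| \le \ey/(3\Lyy^+)$, $\max_{y \in Y}\psi_t(y) - \psi_t(y_t) \le \ey^2/(18\Lyy^+)$, and $\Dy(y_t, -\nabla \psi_t(y_t), \Lyy^+) \le \ey/3$. Combining the last bound with the identity $\nabla \psi_t(y_t) = \gy F(\wh x_t(y_t), y_t) - \lamy(y_t - \by)$, the closeness $\wt x_t(y_t) \approx \wh x_t(y_t)$ from stage~(i), monotonicity of $\Dy(z,\zeta,L)$ in $L$ (via the proximal PL lemma as in \cref{eq:stationarity-by-closeness-approx}), and a Young's inequality manipulation identical in spirit to the $y$-stationarity derivation in \cref{sec:algo-outline}, yields the claimed $5\ey$-stationarity of $(x_t, y_t)$ in~$y$ for every~$t$, and in particular at the returned iterate.

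For stage~(iv), the objective-gap guarantees from stages (i) and (iii) convert the exact saddle-point chain~\cref{eq:ppm-primal-dual-simple} into its perturbed form
\[
\bF(x_t, y_{t+1}) + \Lxx \|x_t - x_{t-1}\|^2 \le \bF(x_{t-1}, y_t) + O(\ex^2/\Lxx),
\]
where the additive slack absorbs both the outer saddle-point error and the inner primal error. Telescoping over $t \in [\bTx - 1]$, applying the one-sided analogue of \cref{eq:ppm-primal-dual-simple-half} at $t = \bTx$, and bounding the endpoints via \cref{eq:gap-estimate} yields $\min_t \|x_t - x_{t-1}\|^2 \le (\Gap + 2\ey\Ry)/(\Lxx \bTx) + O(\ex^2/\Lxx^2)$. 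Converting this closeness into $\Dx$-stationarity via the perturbed argument \cref{eq:stationarity-by-closeness-approx}--\cref{eq:stationarity-guarantee-approx} (applied to the primal of~\cref{eq:minmax-conceptual-updates} with the $\Dx$-tolerance provided by stage~(i)) and using $\gx \bF = \gx F$ produces $\Dx(x_\tau, \gx F(x_\tau, y_\tau), \Lxx) \le 2\ex$. The main obstacle lies precisely in this stage: the per-iteration additive errors, after telescoping over $\bTx$ iterations, must remain dominated by the $O(\Gap/(\Lxx\bTx))$ main term; this is what the third clause of~\cref{eq:delta-value} together with the logarithmic safety factors in $\So$ and $\Sy$ guarantee. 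The oracle count $\To \So \Sy \bTx \bTy$ is then immediate from the nested structure of the algorithm, and the number of projections is bounded by twice that because each \FGMR{} iteration performs two prox-mappings.
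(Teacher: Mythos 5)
Your four-stage plan matches the paper's proof closely, and stages (i), (ii), and (iii) are essentially correct in outline. However, there is one genuine gap in stage (iv): you only record the "natural" dual objective-gap bound $\max_{y}\psi_t(y) - \psi_t(y_t) \le \ey^2/(18\Lyy^+)$ from~\cref{cor:fgm-restart-bound}, but that bound is too weak to make the perturbed saddle-point chain work. The additive slack per iteration must be $O(\ex^2/\Lxx)$, and to control $\bF_t(x_t,y_{t+1}) - \bF_t(x_t,y_t)$ at that scale one has to compare $\vphi_t(x_t)$ with $\vphi_t(x_t^*)$ via Danskin's theorem; but $\vphi_t$ is only $(3\Lxx + \Lxy^2/\lamy)$-smooth, and with $\lamy = \ey/\Ry$ the coefficient $\Lxy^2/\lamy$ can be arbitrarily large. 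The paper absorbs this by first proving the sharper dual gap $\psi_t(y_t^*) - \psi_t(y_t) \le \ex^2/(18\Lxx\bTy^2)$ (this is precisely what the second, $\Theta^+/\delta$, term in the lower bound for $\Sy$ buys, together with the third clause of~\cref{eq:delta-value}), then converting it by strong convexity into $\|\wh x_t(y_t) - x_t^*\|^2 \le \ex^2/(9\Lxx^2\bTy^2)$, and only then invoking the smoothness of $\vphi_t$ — the extra $\bTy^{-2}$ factor exactly cancels the $\Lyy^+/\lamy$ blowup after plugging in $\bTy \gtrsim \sqrt{\Lyy^+/\lamy}$. You gesture at "the third clause of $\delta$ together with the logarithmic safety factors in $\So$ and $\Sy$" doing this work, which shows the right intuition, but the actual mechanism (the $\bTy^{-2}$-scaled dual gap plus the Danskin smoothness of $\vphi_t$) is what needs to be made explicit, since the bound you state in stage (iii) would leave a residual slack of order $\ey\Ry$, which is not $O(\ex^2/\Lxx)$.

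Two smaller imprecisions: the closeness bound from~\cref{prop:ppm-via-fgm} that feeds the $\delta$-inexact oracle must be $\|\wt x_t(y) - \wh x_t(y)\| \le \delta/(8\Lxy\Ry)$ (so that $\Lxy\cdot\|\wt x_t - \wh x_t\| \cdot 2\Ry \le \delta/4$), not $O(\delta/\Lxx)$; and the smoothness modulus of $\psi_t$ used in the FGMR guarantee is $\Lyy^+ + \lamy$, not $\Lyy^+$. Neither of these affects the overall strategy, which is indeed the paper's.
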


We emphasize that our criterion of approximate FNE (cf.~\cref{def:stationarity}) is stronger than the criterion based on the proximal gradient: the obtained point~$(\wh x, \wh y)$ also satisfies
\[
%\begin{ald}
\Lxx \left\| \wh x - \proj_{X}\left(\wh x - \tfrac{1}{\Lxx} \gx F(\wh x, \wh y) \right) \right\| \le 2\ex, 
%\quad \text{and} 
\quad
\Lyy \left\| \wh y - \proj_{Y}\left(\wh y + \tfrac{1}{\Lyy} \gy F(\wh x, \wh y) \right) \right\|  \le 5\ey,
%\end{ald}
\] 
cf.~Remark~\ref{rem:criteria-comparison}. 
On the other hand, the converse is not true: the above guarantee is not sufficient to conclude that the point is~$(\ex,\ey)$-FNE in the sense of~\cref{def:stationarity}. 

\begin{rem}[Nonconvex-strongly-concave setup]
\label{rem:strongly-concave}
From~\cref{th:upper-bound} we see that~\cref{alg:fsp} can also be used when the objective~$F(x,y)$ is~$\lamy$-strongly concave in~$y$ with general~$\lamy$, leading to the complexity estimate~$\wt O(\Tx (\kappa_{\y}^+)^{1/2})$, where~$\kappa_{\y}^+ = \Lyy^+/\lamy$ is the condition number of the dual function \odima{in~\cref{eq:minmax-conceptual-updates}.} %~\eqref{opt:min-max}.
This matches the best known rate (see,~e.g.,~\cite{lin2020near}).
To this end, it suffices to run the algorithm with parameters set as in the premise of~\cref{th:upper-bound}, but fixing a prescribed value for~$\lamy$. 
\end{rem}

\odima{
\begin{rem}[Unknown~$\Gap$ and adaptive termination criterion]
As per~\cref{th:upper-bound}, the value~$\Gap$ enters the prescribed setup of parameters for~\cref{alg:fsp},  in three places: in the expression for the number~$\bTx$ of iterations in the outer loop and under the logarithms in~$S^o$ and~$S_y$ through~$\delta$, cf.~\cref{eq:delta-value}. 
In practice,~$\Gap$ is usually unknown, but this does not pose a problem. 
Indeed, in the case of logarithmic dependencies (in $S^o$ and~$S_y$), we can use, instead of~$\Delta$, a very crude upper bound (e.g., we always have~$\Delta \le 2\Lxx^{\vphantom 2}\Rx^2$ whenever~$X$ is contained in the Euclidean ball with radius~$\Rx$). 
As for~$\bTx$, observe that, when actually running~\cref{alg:fsp}, one does not have to fix in advance the number of outer loop iterations. 
Instead, one can run an infinite loop and check the stopping criterion~$\Dx(x_t, \gx F(x_t, y_t),\Lxx) \le 2\ex$ after each iteration, which amounts to computing a prox-mapping for~$X$. 
This is a valid stopping criterion: as follows from the proof of~\cref{th:upper-bound}, the complementary condition~$\Dy(y_t, -\gy F(x_t, y_t),\Lyy) \le 5\ey$ is maintained at each~$t$. 
To this end,~\cref{th:upper-bound} guarantees the termination of \cref{alg:fsp} after at most~$\bTx$ outer loop iterations. %but in practice this can happen earlier.
\end{rem}
}

\subsection{Proof of~\cref{th:upper-bound}}
\label{sec:upper-bound-proof}

We use the notation introduced in~\cref{sec:algo-outline}--\ref{sec:implementation} and refer to the arguments presented there if needed.

\proofstep{1}.
\odima{Given a primal iterate~$x_{t-1}$, let us define the following auxiliary functions:
\vspace{-0.2cm}
\[
\begin{aligned}
F_t(x,y) &:= F(x,y) + \Lxx \|x - x_{t-1}\|^2, \\
\bF(x,y) &:= F(x,y) - \tfrac{1}{2} \lamy \|y - \by\|^2, \\
\bF_t(x,y) &:= F_t(x,y) - \tfrac{1}{2} \lamy \|y - \by\|^2 \quad [= \bF(x,y)  + \Lxx \|x - x_{t-1}\|^2] \;. \\
%= F_t(x,y) - \frac{\lamy}{2} \|y - \by\|^2,
\end{aligned}
\]
}
%Note that~$\bF_t(x,y) = F_t(x,y) - \frac{\lamy}{2} \|y - \by\|^2$.
Consider first the ``idealized'' update from the primal iterate~$x_{t-1}$, as given by
\begin{eq}
\label{eq:conceptual-updates-proof}
\begin{ald}
%y_t &= \wh y(x_{t-1}) := \arg\max_y \left\{\psi_t(y) :=  \min_x F_t(x,y)  - \frac{\bey \|y - \by\|^2}{2\Ry} \right\},\\
y_t &= \arg\max_{y \in Y} \psi_t(y), \quad
x_t = \wh x_t(y_t).
\end{ald}
\end{eq}
Here,~$\psi_t(y)$ and~$\wh x_t(y)$ are defined as
\begin{eq}
\begin{ald}
\label{eq:conceptual-updates-proof-dual}
\psi_t(y) &:= \min_{x \in X} \bF_t(x,y) \left[= \bF_t(\wh x_t(y), y) \right], \quad
\wh x_t(y) := \argmin_{x \in X} \bF_t(x,y),
\end{ald}
\end{eq}
with 
% \[
% \begin{ald}
% \bF(x,y) &:= F(x,y) - \frac{\lamy}{2} \|y - \by\|^2, \\ 
% \bF_t(x,y) &:= \bF(x,y)  + \Lxx \|x - x_{t-1}\|^2 = F_t(x,y) - \frac{\lam}{2} \|y - \by\|^2,
% \end{ald}
% \]
%corresponding to the stepsize~$\gamx = 1/(2\Lxx)$ and 
%with~$\bey \le \ey$ as given in~\cref{eq:worst-error}.
Clearly,~$\psi_t(y)$ is~$\lamy$-strongly concave with~$\lamy = \bey/\Ry$.
On the other hand, by Danskin's theorem (see,~e.g.,~\cite[Lem.~24]{nouiehed2019solving}),~$\psi_t(y)$ is continuously differentiable with %gradient
\begin{eq}
\label{eq:psi-grad}
\nabla \psi_t(y) = \partial_y \bF(\wh x_t(y),y) = \partial_y F(\wh x_t(y),y) - \lamy(y-\by),
\end{eq}
and~$\nabla \psi_t(y)$ is~$(\Lyy^+ + \lamy)$-Lipschitz with~$\Lyy^+$ defined in~\cref{def:lyy-plus}. 
%under~\eqref{eq:assumption-gap} its Lipschitz constant is at most~$2\Lyy^+$. 

\proofstep{2}. We now focus on the properties of the point~$\wt x_t(y)$ returned when calling
$
\AppGrad(y,x_{t-1},\by,\gamx,\lamy,\To,\So),
$
cf.~line~\ref{line:call-approx-grad} of~\cref{alg:fsp}, as well as the corresponding pair~$[\wt \psi_t(y), \wt\nabla \psi_t(y)]$, cf.~\cref{eq:psi-inexact-oracle}. 
Note that the function value~$\wt \psi_t(y)$ is never computed in~\cref{alg:fsp} and we only use it in the analysis.
Inspecting the pseudocode of~\AppGrad (\cref{alg:grad-approx}), we see that~$\wt x_t(y)$ corresponds to the approximate minimizer of~$\bF_t(x,y)$ (thus also~$F_t(x,y)$) in~$x$, obtained by running restarted FGM  (\cref{alg:fgm-restart}) starting from~$x_{t-1}$, with stepsize~$\gam = 1/(3\Lxx)$,~$\To = 11$ inner loop iterations, and the number of restarts~$\So$ given in~\cref{eq:inner-loop-params}. 
Observe that minimizing~$F_t(\cdot,y)$ corresponds to computing the proximal operator~$\x^+_{\gam F(\cdot,y),X}(x_{t-1})$ for the function~$F(\cdot,y)$ which is~$\Lxx$-smooth. 
Hence, due to our choice of input parameters, the premise of~\cref{prop:ppm-via-fgm} is satisfied; applying it with our choice of~$\So$ yields
\begin{align}
\label{eq:approximation-accuracy-grads}
%\| \gx F (\wt x_t(y),y) - \gx F(\wh x_t (y), y) \|
\Dx(\wt x_t(y), \gx F_t (\wt x_t(y),y), \Lxx) &\le \frac{\ex}{2},\\
\label{eq:approximation-accuracy-points}
\| \wt x_t(y) - \wh x_t (y) \| &\le \min\left[ \frac{\ex}{6\Lxx}, \frac{\delta}{8\Lxy\Ry} \right],\\
\label{eq:approximation-accuracy-values}
%\bF_t (\wt x_t(y),y)  - \bF_t (\wh x_t(y),y)  = 
\bF_t (\wt x_t(y),y)  - \bF_t (\wh x_t(y),y) &\le \min\left[\frac{\ex^2}{24\Lxx},  \frac{\delta}{2} \right].
\end{align}
Here,~\cref{eq:approximation-accuracy-grads} and the first respective terms in~\cref{eq:approximation-accuracy-points}--\eqref{eq:approximation-accuracy-values} are due to the first of three terms in brackets under logarithm in~\cref{eq:inner-loop-params}, cf.~\cref{eq:ppm-via-fgm-params}, combined with a very crude uniform over~$y \in Y$ estimate
%~$F(x_{t-1},y) - \min_{x \in X} F(x,y)$
\begin{eq}
\label{eq:recursive-bound}
\bF_t(x_{t-1},y) - \min_{x \in X} \bF_t(x,y) \le 3\Gap + 2\Theta + 6\bey \Ry.
%\le 71\Gap + 2 \Lyy^{\vphantom 2} \Ry^2 + 2\bey \Ry  
\end{eq}
(We defer the proof of~\cref{eq:recursive-bound} to appendix.)
On the other hand, the second respective estimates in~\eqref{eq:approximation-accuracy-points}--\eqref{eq:approximation-accuracy-values}  correspond to the two remaining terms in brackets under logarithm in~\cref{eq:inner-loop-params}, cf.~\cref{eq:ppm-via-fgm-params}, combined with the following easy-to-verify relations:
\[
\frac{\Lxx}{\ex^2} = \frac{1}{12\delta} \quad \Longleftrightarrow \quad \frac{\ex^2}{24\Lxx} = \frac{\delta}{2},
\]
\[
\left\{ 
\frac{2\Theta^+}{\delta^2} \ge \frac{16 \Lxy^2 \Ry^2}{9 \Lxx \delta^2} = \Lxx \left(\frac{8\Lxy \Ry}{6\Lxx\delta} \right)^2 =: \frac{\Lxx}{(\ex')^2}
\right\} 
\quad \Longrightarrow \quad
\frac{\ex'}{6\Lxx} = \frac{\delta}{8\Lxy \Ry}.
\]
Now,~\eqref{eq:approximation-accuracy-grads}--\eqref{eq:approximation-accuracy-values} have two consequences.
%\begin{itemize}
%\item
First, by~\cref{eq:approximation-accuracy-values} we immediately have
\begin{eq}
\label{eq:ppm-via-fgm-max-min-step-progress}
%F(\wt x_t(y), y) + \frac{1}{10\Lxx}(\|\gx F(\wt x_t(y),y)\|^2 - 7\ex^2)  \le F(x_{t-1},y),
\bF(\wt x_t(y), y) + \Lxx \| \wt x_t(y)  - x_{t-1} \|^2 - \frac{\ex^2}{24\Lxx}  \le \bF(x_{t-1},y),
\end{eq}
which mimics~\cref{eq:ppm-primal-dual-simple-half}. 
The bound~\cref{eq:ppm-via-fgm-max-min-step-progress} will be our departure point when bounding~$\Dx$ later on.
%\item
Second, the second respective terms in the right-hand side of~\cref{eq:approximation-accuracy-points}--\eqref{eq:approximation-accuracy-values} together ensure that  the pair~$[-\wt\psi_{t}^\delta(y), -\wt \nabla \psi_t(y)]$ with
\begin{eq}
\label{eq:inexact-oracle-psi-xi}
\begin{ald}
\wt\psi_{t}^\delta(y) := \bF_t(\wt x_t(y),y) + {\delta}/{4}, \quad %+ \frac{\delta}{2},
%\left[= \partial_y F(\wt x_t(y),y) - \frac{\bey}{\Ry}(y-\by)\right]
\wt\nabla\psi_t(y) =  \partial_y \bF(\wt x_t(y),y)
%\left[=  F_t(\wt x_t(y),y) - \frac{\bey}{2\Ry}\|y-\by\|^2 + \frac{\delta}{2}\right], \\
\end{ald}
\end{eq}
is a~$\delta$-inexact first-order oracle for~$-\psi_t(y)$ in the sense of Definition~\ref{def:inexact-oracle}, namely,
\begin{eq}
\label{eq:inexact-oracle-for-psi}
0 \le -\psi_t(y') + \wt\psi_{t}^\delta(y) + \langle \wt\nabla \psi_t(y), y' - y \rangle  \le \tfrac{1}{2} (\Lyy^+ + \lamy) \|y'-y\|^2 + \delta, \quad \forall y,y' \in Y,
\end{eq}
where we used that~$\psi_t$ is~$(\Lyy^+ + \lamy)$-smooth (the proof of~\cref{eq:inexact-oracle-for-psi} is deferred to appendix).
%\end{itemize}
%%%%%%%%%%%%%%%%
%%%%%%%%%%%%%%%%
%%%%%%%%%%%%%%%%

\proofstep{3}.
Now consider the actual update performed in the for-loop of~\cref{alg:fsp}:
%Namely, observe that
\begin{eq}
\label{eq:actual-updates-proof}
\begin{ald}
y_t \approx \arg\max_{y \in Y} \psi_t(y), \quad %= \bF(\wt x_t(y), y) \right\},\\
x_t = \wt x_t(y_t),
\end{ald}
\vspace{-0.1cm}
\end{eq}
where the precise meaning of~``$\approx$'', is
$
y_{t} = \FGMR (\by, Y, \gamy, \bTy, \Sy, -\wt\nabla\psi_t(\cdot)),
$
cf.~line~\ref{line:dual-update}.
In other words,~$y_t$ is obtained by running Algorithm~\ref{alg:fgm-restart} with~$\delta$-inexact gradient~$\wt \nabla \psi_t(\cdot)$, starting from~$\by \in Y$, with~$\bTy$ iterations in the inner calls of FGM and~$\Sy$ restarts,~$\bTy$ and~$\Sy$ being given in~\cref{eq:outer-loop-params}.
Recall that~$\psi_t(\cdot)$ is~$(\Lyy^+ + \lamy)$-smooth and~$\lamy$-strongly convex with~$\lamy = \bey/\Ry$, and %~$\delta$ has been chosen as=
\begin{eq}
\label{eq:delta-required}
\delta \stackrel{\cref{eq:delta-value}}{\le} \frac{\Theta}{2\bTy^3} \le \frac{(\Lyy^+ + \lamy) \Ry^2}{2\bTy^{3}}, 
\end{eq}
i.e., the condition in~\cref{eq:fgm-oracle-accuracy-bound} is satisfied. 
By our choice of~$\bTy$ and~$\Sy$ in~\cref{eq:outer-loop-params}, and due to~\cref{cor:fgm-restart-bound}, we get
\vspace{-0.2cm}
\begin{eq}
\label{eq:dual-update-accuracy}
\begin{ald}
\|y_t - y_t^* \| \le \frac{\bey}{3\Lyy^+}, \quad %\le \frac{\bey}{2\Lyy^+}, \\ %= \sqrt{\frac{\ey\Ry}{\Lyy^+ }}, \\
\psi_t(y_t^*)-\psi_t(y_t) \le\min\left[\frac{\bey^2}{18\Lyy^+\vphantom{\bTy^2}},\frac{\ex^2}{18\Lxx \bTy^2}\right],\\ %\le \frac{\bey^2}{8\Lyy^+}, \\ %= \ey \Ry.
%(\Lyy^+ + \lamy) \left\| y_t - \prox_{y_t, Y}\left(-\frac{\nabla \psi_t(y_t)}{\Lyy^+ + \lamy} \right) \right\| &\le {\bey}, %\le \frac{\bey}{\Lyy^+},
%\text{and} \quad\quad\quad\quad\quad\quad
\end{ald}
\end{eq}
and~$\Dy(y_t, -\nabla \psi_t(y_t), \Lyy^+ + \lamy) \le {\bey}/{3},$
where~$y_t^*$ is the exact maximizer of~$\psi_t$ (cf.~\cref{eq:fgm-approximation-bounds}). 
Here we used the first lower bound in~\cref{eq:outer-loop-params} for~$\Sy$ to obtain all estimates except for the second estimate of~$\psi_t(y_t^*) - \psi_t(y_t)$, and for this latter estimate we used the second bound in~\cref{eq:outer-loop-params} for~$\Sy$ and the last bound in~\cref{eq:delta-value} for~$\delta$, and did a series of estimates: 
\vspace{-0.3cm}
\[
\begin{ald}
\Sy \stackrel{\cref{eq:outer-loop-params}}{\ge} \log_2\left(  \frac{(\Theta^+)^2}{\delta^2} \right) 
\stackrel{\cref{eq:delta-value}}{\ge} \log_2 \left( \frac{{\bTx}^{\vphantom 2} {\bTy}^2 \Theta^+} {\Gap} \right) 
%&= \log_2 \left( \frac{{\bTx}^{\vphantom 2} {\bTy}^2 \Lyy^+ \Ry^2} {\Gap} \right) \\
&\ge \log_2 \left( \frac{10 \Lxx {\bTy}^2 \Lyy^+ \Ry^2} {\ex^2} \right) \\
&\ge 2\log_2 \left( \frac{3 \Lyy^+ \Ry}{\ey'} \right), \;
%\text{with} \; 
(\ey')^2 := \frac{\ex^2  \Lyy^+}{\Lxx \bTy^2}.
\end{ald}
\]
%\vspace{-0.3cm}
Now, by the proximal PL-lemma~(\cite[Lem.~1]{karimi2016linear}),~$\Dy(y, g, L)$ 
is non-decreasing in~$L$, so%\footnote{Namely, we apply~\cite[Lem.~1]{karimi2016linear} using the indicator of~$Y$ as the proximal function~$g(y)$ in the lemma premise.}
\vspace{-0.1cm}
\begin{eq}
\label{eq:psi-stationarity}
\Dy(y_t, -\nabla \psi_t(y_t), \Lyy) \le {\bey}/{3}.
\end{eq}
%it can be shown (see, e.g.,~\cite[Lemma 4.3]{kadkhodaie2014linear}) that the norm of the proximal gradient monotonically decreases with stepsize, hence 
%\begin{comment}
%\[
%\Lyy \left\| y_t - \prox_{y_t, Y}\left(-\frac{\nabla \psi_t(y_t)}{\Lyy} \right) \right\| 
%\le (\Lyy^+ + \lamy) \left\| y_t - \prox_{y_t, Y}\left(-\frac{\nabla \psi_t(y_t)}{\Lyy^+ + \lamy} \right) \right\| 
%\le \bey.
%\]
%\end{comment}
Due to~\cref{eq:psi-grad} and the Lipschitzness of~$\gy F(\cdot, y)$ and~$\prox_{y_t,Y}(\cdot)$,~$x_t = \wt x_t(y_{t})$ satisfies
\vspace{-0.1cm}
\begin{equation*}
\begin{ald}
&\Dy^2(y_t,-\gy F(x_t,y_t),\Lyy) \\
&\stackrel{(a)}{\le} \Dy^2(y_t,-\gy F(x_t,y_t),2\Lyy) 
=4\Lyy \max_{y \in Y} \big[\big\langle \gy F(x_t, y_t), y - y_t  \big\rangle  - {\Lyy}\|y - y_t\|^2 \big] \\
&\le 4\Lyy \max_{y \in Y} \big[\big\langle \gy F(x_t, y_t) - \py F(\wh x_t(y_t), y_t) + \tfrac{\ey}{\Ry}(y_t - \by), y - y_t \big\rangle  - \tfrac{\Lyy}{2}\|y - y_t\|^2 \big] \\
&\quad\quad\quad+ 4\Lyy \max_{y \in Y} \big[\big\langle \py F(\wh x_t(y_t), y_t) - \tfrac{\ey}{\Ry}(y_t - \by), y - y_t \big\rangle  - \tfrac{\Lyy}{2}\|y - y_t\|^2 \big] \\
&= 4\Lyy \max_{y \in Y} \big[\big\langle \gy F(x_t, y_t) - \py F(\wh x_t(y_t), y_t) + \tfrac{\ey}{\Ry}(y_t - \by), y - y_t \big\rangle  - \tfrac{\Lyy}{2}\|y - y_t\|^2 \big] \\
&\quad\quad\quad + 2\Dy^2(y_t,-\nabla \psi_t(y_t), \Lyy) \\
&\stackrel{(b)}{\le} 4\Lyy \max_{y \in Y} \big[\big\langle \gy F(x_t, y_t) - \py F(\wh x_t(y_t), y_t) + \tfrac{\ey}{\Ry}(y_t - \by), y - y_t \big\rangle  - \tfrac{\Lyy}{2}\|y - y_t\|^2 \big] + \tfrac{2\ey^2}{9} \\ 
% \end{ald}
% \end{equation*}
% \begin{equation*}
% \begin{ald}
&\stackrel{(c)}{\le} 2\big\|\gy F(x_t, y_t) - \py F(\wh x_t(y_t), y_t) + \tfrac{\ey}{\Ry}(y_t - \by)\big\|^2 + \tfrac{2\ey^2}{9} \\
&\stackrel{(d)}{\le} 4\left\|\gy F(x_t, y_t) - \py F(\wh x_t(y_t), y_t) \right\|^2 + \tfrac{4\ey^2}{\Ry^2}\left\|y_t - \by\right\|^2 + \tfrac{2\ey^2}{9} \\
&\stackrel{(e)}{=} 4 \Lxy^2 \left\|\wt x_t(y_t) - \wh x_t(y_t)\right\|^2 + 16\ey^2 + \tfrac{2}{9} \ey^2
\stackrel{(f)}{\le} \tfrac{\delta^2}{16\Ry^2} + 16\ey^2 + \tfrac{2}{9}\ey^2
\stackrel{(g)}{\le} 21 \ey^2.
\end{ald}
\end{equation*}
%\begin{equation*}
%\begin{ald}
%\end{ald}
%%\label{eq:stationarity-long-chain-y}
%\end{equation*}
\normalsize
Here 
in~$(a)$ we used that~$\Dy(y,g,L)$ is non-decreasing in~$L$~(\cite[Lem.~1]{karimi2016linear}); 
in~$(b)$ we used~\cref{eq:psi-stationarity}; %and replaced~$Y$ with the entire space~$\YY$; 
in~$(c)$ we used Young's inequality;
in~$(d)$ we used the Cauchy-Schwarz inequality; 
in~$(e)$ we used the Lipschitzness of~$F$; 
in~$(f)$ we used~\cref{eq:approximation-accuracy-points};
in~$(g)$ we used our choice of~$\delta$ in~\cref{eq:delta-value}.
%\begin{comment}
%\[
%\begin{ald}
%&\Lyy \left\| y_t - \prox_{y_t, Y}\left(-\frac{\gy F(x_t, y_t)}{\Lyy} \right) \right\| \\
%&= \Lyy \left\| y_t - \prox_{y_t, Y}\left(-\frac{\py F(\wt x_t(y_t), y_t)}{\Lyy} \right) \right\| \\
%&\le \Lyy \left\| y_t - \prox_{y_t, Y}\left(-\frac{\py F(\wh x_t(y_t), y_t)}{\Lyy} \right) \middle\|  + \middle\| {\py F(\wt x_t(y_t), y_t)} - {\py F(\wh x_t(y_t), y_t)}\right\| \\
%&\le \Lyy \left\| y_t - \prox_{y_t, Y}\left(-\frac{\nabla \psi_t(y_t)}{\Lyy} \right) \right\|  + \frac{\bey}{\Ry} \left\| y_t-\by \right\| + \Lxy \left\| \wt x_t(y_t) - \wh x_t(y_t) \right\| \\
%&\le 3\bey + \frac{\delta}{8\Ry} \\
%&\le 4\bey,
%\end{ald}
%\]
%\end{comment}
Thus,~$(x_t, y_t)$ is kept~$5\ey$-stationary in~$y$ at any iteration~$t$.

\proofstep{4}.
We now revisit~\cref{eq:ppm-via-fgm-max-min-step-progress}. 
Applying it to~$y = y_t$, we get
\begin{eq}
\label{eq:ppm-primal-dual-simple-half-proof}
\bF(x_t, y_t) + \Lxx \|x_t - x_{t-1}\|^2 - \frac{\ex^2}{24\Lxx} \le \bF(x_{t-1},y_t),
\end{eq}
which mimics~\cref{eq:ppm-primal-dual-simple-half}.
Our goal, however, is to mimic~\cref{eq:ppm-primal-dual-simple}, for which we must lower-bound, up to a small error,~$\bF(x_t, y_t)$ via~$\bF(x_t, y_{t+1})$, or, equivalently,~$\bF_t(x_t, y_t)$ via~$\bF_t(x_t, y_{t+1})$.
First, 
\begin{eq}
\label{minmax-argument-prev}
\bF_t(x_t, y_{t+1})  \le \max_{y \in Y} \bF_t(x_t, y) = \vphi_t(x_t),
\end{eq}
where~$\vphi_t(x) := \textstyle\max_{y \in Y} \bF_t(x,y)$ is the primal function in the saddle-point problem~\cref{eq:minmax-conceptual-updates}.
On the other hand, denoting~$x_t^* = \wh x_t(y_t^*)$, so that~$(x_t^*, y_t^*)$ is the unique saddle point in~\cref{eq:minmax-conceptual-updates}, we have
\begin{align}
\bF_t(x_t, y_t) 
\equiv \bF_t(\wt x_t(y_t), y_t) 
\ge \bF_t(\wh x_t(y_t), y_t) 
= \psi_t(y_t)  
&\stackrel{\eqref{eq:dual-update-accuracy}}{\ge} \psi_t(y_t^*) - \frac{\ex^2}{18\Lxx \bTy^2} \nn
%&= \bF_t(x_t^*,y_t^*) - \frac{\ex^2}{18\Lxx \bTy^2} \nn
&\stackrel{\hphantom{\eqref{eq:dual-update-accuracy}}}{\ge} \vphi_t(x_t^*) - \frac{\ex^2}{18\Lxx \bTy^2}.
\label{eq:minmax-argument-this}
\end{align}
It remains to compare~$\vphi_t(x_t)$ and~$\vphi_t(x_t^*)$. 
Combining~$\bF_t(x_t^*,y_t^*) \ge \bF_t(x_t^*,y_t)$ with the previous inequality,
%where we used the saddle-point property of~$(x_t^*,y_t^*)$. 
%and~\cref{eq:dual-update-accuracy} is the guarantee on the dual gap of~$(x_t,y_t)$ in~\cref{eq:minmax-conceptual-updates}.
and observing that~$\bF_t(\cdot, y_t)$ is~$\Lxx$-strongly convex and minimized at~$\wh x_t(y_t)$, we obtain
$
\|\wh x_t(y_t) - x_t^*\|^2  \le \frac{\ex^2}{9\Lxx^2 \bTy^2}.
$
On the other hand,~\cref{eq:approximation-accuracy-points} applied to~$y = y_t$ gives
\begin{eq}
\label{eq:approximation-of-hat-x}
\left\| x_t - \wh x_t(y_t) \right\|^2 \le \left(\frac{\delta}{8\Lxy\Ry}\right)^2 \le \frac{\Gap (\Theta^+ - \Theta)}{64 \Lxy^2 \Ry^2 \bTy^2 \bTx}
= \frac{\ex^2}{640\Lxx^2\bTy^2},
\end{eq}
where we used the last expression in~\cref{eq:delta-value} for~$\delta$.
Combining these results, we get
\[
\|x_t - x_t^*\|^2 \le \frac{0.12 \ex^2}{\Lxx^2 \bTy^2}.
\]
Now,~$\vphi_t$ is~$\left(3\Lxx + {\Lxy^2}/{\lamy^{\vphantom{2}}}\right)$-smooth by Danskin's theorem, and minimized at~$x_t^*$. 
Thus
\begin{eq}
\label{eq:iteration-acc-primal-gap} 
\vphi_t(x_t) - \vphi_t(x_t^*) \le \frac{3}{2} \left(\Lxx + \frac{\Lxy^2}{\lamy^{\vphantom{2}}}\right) \|x_t - x_t^*\|^2 
%\le \frac{6(\Lxx + \Lxy^2/\lamy^{\vphantom{2}}) \ex^2}{\Lxx^2 \bTy^2} 
%\le \frac{0.18\ex^2}{\Lxx} \left(1+ \frac{\Lxy^2}{\lamy \Lxx \bTy^2} \right) 
\le \frac{0.18\ex^2}{\Lxx} \left(1+ \frac{\Lyy^+}{\lamy \bTy^2} \right)
\le \frac{0.19\ex^2}{\Lxx},
\end{eq}
where in the last step we plugged in~$\bTy$ from~\cref{eq:outer-loop-params}.
Returning to~\cref{minmax-argument-prev}--\cref{eq:minmax-argument-this}, we get
$
\bF(x_t, y_t) \ge \bF(x_t, y_{t+1}) - {0.25\ex^2}/{\Lxx}.
$
Combining this with~\cref{eq:ppm-primal-dual-simple-half-proof} we finally get the desired analogue of~\cref{eq:ppm-primal-dual-simple}:
\begin{eq}
\label{eq:ppm-primal-dual-simple-proof}
\bF(x_t, y_{t+1}) + \Lxx \|x_t - x_{t-1}\|^2 - \frac{7\ex^2}{24\Lxx} \le \bF(x_{t-1},y_t).
\end{eq}
This bound can be iterated, and we can proceed as in~\cref{sec:ppm-via-fgm}. 
First we mimic~\cref{eq:prox-point-argument-approx}:
%\[
%\begin{ald}
%\min_{t \in [\bTx]} \|\gx F(x_t, y_t)\| 
%\le \sqrt{\frac{1}{T} \sum_{t \in [\bTx]} \|\gx F(x_t, y_t)\|^2 } 
%&= \sqrt{\frac{10\Lxx[\bF(x_0, y_1) - \bF(x_{\bTx}, y_{\bTx})]}{\bTx} + 15\ex^2} \\
%&\le \sqrt{\frac{10\Lxx[\Gap + 2\bey \Ry]}{\bTx} + 15\ex^2} 
%\le 4\ex.
%\end{ald}
%\]
%\begin{eq}
\begin{align}
\min_{t \in [\bTx]} \|x_t - x_{t-1}\|^2
\le \frac{1}{\bTx} \sum_{t \in [\bTx]} \| x_t - x_{t-1} \|^2 
&\le \frac{\bF(x_0, y_1) - \bF(x_{\bTx}, y_{\bTx})}{\Lxx\bTx} + \frac{7\ex^2}{24 \Lxx^2} \notag \\
&\le \frac{\Gap + 2\ey \Ry}{\Lxx \Tx} + \frac{7\ex^2}{24 \Lxx^2} 
\le \frac{5\ex^2}{12\Lxx^2},
\label{eq:prox-point-argument-approx-proof}
\end{align}
%\end{eq}
where we used the estimates~\cref{eq:gap-estimate} and plugged in~$\Tx$. 
It remains to mimic~\eqref{eq:stationarity-by-closeness-approx}:
%using the same idea as in~\cref{eq:stationarity-long-chain-y}: 
%for~$\tau \in \argmin_{t \in \bTx} \|x_t - x_{t-1}\|^2$, then
\[
\begin{ald}
\Dx^2(x_t, \gx F(x_t, y_t), \Lxx) 
&\le \Dx^2(x_t, \gx F(x_t, y_t), 2\Lxx) \\
&\equiv 4\Lxx \max_{x' \in X} \left[- \lang \gx F(x_t, y_t), x' - x_t \rang - {\Lxx}\|x' - x_t\|^2 \right] \\
&\le 4\Lxx \max_{x' \in X} \left[- \lang \gx F_t (x_t, y_t), x' - x_t \rang - \tfrac{1}{2} \Lxx \|x' - x_t\|^2 \right] \\
&\quad + 4\Lxx \max_{x' \in X} \left[ \lang 2 \Lxx (x_t - x_{t-1}), x' - x_t \rang - \tfrac{1}{2} \Lxx \|x' - x_t\|^2 \right] \\
&= 2 \Dx(x_t, \gx F_t (x_t,y_t), \Lxx) \\ 
&\quad + 4\Lxx \max_{x' \in X} \left[ \lang 2 \Lxx (x_t - x_{t-1}), x' - x_t \rang - \tfrac{1}{2} \Lxx \|x' - x_t\|^2 \right] \\ 
&\stackrel{(a)}{\le} {\ex^2}/{2} + 4\Lxx^2 \max_{x' \in \XX} \left[ 2 \lang x_t - x_{t-1}, x' - x_t \rang - \tfrac{1}{2} \|x' - x_t\|^2 \right] \\ 
&\stackrel{(b)}{\le} {\ex^2}/{2} + 8\Lxx^2 \| x_t - x_{t-1} \|^2 
\stackrel{(c)}{\le} \left( {1}/{2} + {10}/{3} \right) \ex^2 \le 4\ex^2,
\end {ald}
\]
where in~$(a)$ we used~\cref{eq:approximation-accuracy-grads} with~$y = y_t$, in~$(b)$ we used Young's inequality, and~$(c)$ was due to~\cref{eq:prox-point-argument-approx-proof}.
%\begin{comment}
%\begin{eq}
%\begin{ald}
%\Dx^2(x_t, \gx F(x_t, y_t), \Lxx) 
%&\le \Dx^2(x_t, \gx F(x_t, y_t), 2\Lxx) \\
%&= 4\Lxx \max_{x' \in X} \left[- \lang \gx F(x_t, y_t), x' - x_t \rang - \Lxx \|x' - x_t\|^2 \right] \\
%&\le 4\Lxx \max_{x' \in X} \left[- \lang \gx F(\wh x_t(y_t), y_t), x' - x_t \rang - \frac{\Lxx}{2} \|x' - x_t\|^2 \right] \\
%&\quad + 4\Lxx \max_{x' \in X} \left[- \lang \gx F(x_t, y_t)  - \gx F(\wh x_t(y_t), y_t), x' - x_t \rang - \frac{\Lxx}{2} \|x' - x_t\|^2 \right].
%\end{ald}
%\label{eq:stationarity-x-decomposition}
%\end{eq}
%The second term in the right-hand side can be estimated by Cauchy-Schwarz using smoothness:
%\[
%\begin{ald}
%&\quad\quad 4\Lxx \max_{x' \in X} \left[- \lang \gx F(x_t, y_t)  - \gx F(\wh x_t(y_t), y_t), x' - x_t \rang - \frac{\Lxx}{2} \|x' - x_t\|^2 \right] \\
%&\le 4\Lxx^2 \max_{x' \in X} \left[  \| x_t - \wh x_t(y_t) \| \cdot \| x' - x_t \| - \frac{1}{2} \|x' - x_t\|^2 \right] \\
%&\le 4\Lxx^2 \max_{x \in \XX} \left[ \| x_t - \wh x_t(y_t) \| \cdot \| x \| - \frac{1}{2} \|x\|^2 \right]  \\
%&= 4\Lxx^2 \max_{u \ge 0} \left[ \| x_t - \wh x_t(y_t) \| u - \frac{u^2}{2} \right]  
%= 2\Lxx^2  \| x_t - \wh x_t(y_t) \|^2
%\le \frac{\ex^2}{320 \bTy^2}, 
%\end{ald}
%\]
%where in the end we reused~\cref{eq:approximation-of-hat-x}. 
%On the other hand, for the first term in the right-hand side of~\cref{eq:stationarity-x-decomposition} we have
%\end{comment}
Combining this with the result of~\proofstep{3}, we conclude that~$(x_\tau, y_\tau)$ with~$\tau \in \argmin_{t \in \bTx} \|x_t - x_{t-1}\|^2$ is~$(2\ex,5\ey)$-FNE.
Moreover, we have performed~$\left\lceil \To \So  \Sy \bTx \bTy \right\rceil$ iterations of~FGM (in the for-loop of~\cref{alg:fgm}) in total, with one computation of~$\nabla F$ and at most two projections on~$Y$ and $X$ at each iteration. \hfill\proofbox

\odima{
\section{Guarantees for the Moreau envelope}
\label{sec:moreau}
We now consider the standard {\em Moreau envelope} (see~\cite{thekumparampil2019efficient,lin2019gradient}) of the primal function~$\vphi(x) = \max_{y \in Y} F(x,y)$, cf.~\cref{def:prim-function}:
\begin{equation}
\label{def:moreau-envelope}
\vphi_{2\Lxx}(x) := \min_{x' \in X} \left[ \vphi(x') + \Lxx \|x' - x\|^2 \right].
\end{equation}
Clearly,~$\vphi$ is~$\Lxx$-weakly convex, thus the minimized function in~\cref{def:moreau-envelope} is~$\Lxx$-strongly convex.
%Here we follow the literature  taking the regularization term that makes the minimized~$\Lxx$-strongly convex.
%(Thus~$\gx \vphi_{2\Lxx}$ is~$\Lyy^+$-Lipschitz on~$X$, cf.~\cref{def:lyy-plus}, by Danskin's theorem.)
Focusing on the Moreau envelope makes sense in the applications where one is only interested in the ``primal'' accuracy of solving~\cref{opt:min-max}. 
A common practice, in the~$x$-unconstrained case ($X = \XX$), is then to use the primal component~$\wh x$ of an approximate Nash equilibrium~$(\wh x, \wh y)$ as a candidate near-stationary point, measuring the accuracy by~$\|\nabla \vphi_{2\Lxx}(\wh x)\|$. 
% or the proximal gradient norm~$\Ex(\wh x, \gx F(\wh x, \wh y), \Lxx)$ in the general case. 
When~$X = \XX$, passing to the Moreau envelope can be motivated as follows. 
%can be motivated by the following well-known result:
%in contrast to~$\vphi(\cdot)$,
On the one hand,~$\vphi_{2\Lxx}$ has Lipschitz gradient on~$X$ (by Danskin's theorem), and we can ``ignore'' the non-differentiability of~$\vphi$. 
On the other hand,~$\|\nabla \vphi_{2\Lxx}(\wh x) \| \le \ex$ implies that the point~$x^+ = x^+(\wh x)$ delivering the minimum in~\cref{def:moreau-envelope} for~$x = \wh x$ (formally, $x^+_{\lambda\vphi, \XX}(\wh x)$ with~$\lambda = \tfrac{1}{2\Lxx}$, cf.~\cref{def:prox-map}) satisfies
$\|x^+ - \wh x\| = O(\tfrac{\ex}{\Lxx})$ and~$\min_{\xi \in \partial \vphi(x^+)} \|\xi\| \le \ex$,
see,~e.g.,~\cite{rockafellar2015convex}.
%(see, e.g.,~\cite[Lem.~3.8]{lin2019gradient}. 
In other words, any~$\ex$-stationary point for the Moreau envelope is within~$O(\ex/\Lxx)$ distance from a point at which~$\vphi$ has an~$\ex$-small subgradient.
We now extend this result to the case $X \subseteq \XX$. %(constrained) scenario with~$X \ne \XX$. %is non-trivial.
\begin{proposition}
\label{prop:moreau-to-primal}
Let~$\phi: X \to \R$ be~$L$-weakly convex, and define its standard Moreau envelope~$\phi_{2L}(x) = \min_{x' \in X}[\phi(x') + L \|x'-x\|^2]$, cf.~\cref{def:moreau-envelope}. Then:\\
%\begin{enumerate}
1. We have~$\|\nabla\phi_{2L}(x)\| = \Dx(x,\nabla\phi_{2L}(x),2L) = \Ex(x,\nabla\phi_{2L}(x),2L)$ for any~$x \in X$.\\
2. We have~$\nabla \phi_{2\Lxx}(\wh x) = 2\Lxx (\wh x - x^+)$, where~$x^+ = \argmin_{x' \in X} \left[ \phi(x') + \Lxx \|x' - \wh x\|^2 \right]$.\\
\quad Thus~$\|\nabla\phi_{2\Lxx}(\wh x)\| \le \ex$ implies~$\|x^+ - \wh x\| \le {\ex}/{(2\Lxx)}$ and~$\displaystyle\min_{\xi \in \partial \phi(x^+)} \Dx(x^+,\xi,2\Lxx) \le \ex.$
%\end{enumerate}
\end{proposition}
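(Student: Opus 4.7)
Both parts rest on the standard envelope-theorem identity
\begin{equation*}
\nabla \phi_{2L}(x) = 2L(x - x^+(x)),
\end{equation*}
where $x^+(x)$ denotes the unique minimizer in the definition of $\phi_{2L}(x)$; existence and uniqueness follow from the inner objective $x' \mapsto \phi(x') + L\|x'-x\|^2$ being $L$-strongly convex (by the $L$-weak convexity of $\phi$), and the gradient formula is classical (see,~e.g.,~\cite{rockafellar2015convex}). I would establish this formula first and then derive both claims from it.

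\textbf{Part 1.} With $\nabla\phi_{2L}(x) = 2L(x-x^+)$ in hand, $\|\nabla\phi_{2L}(x)\| = 2L\|x-x^+\|$ trivially. For the proximal-gradient measure, observe that $x - \tfrac{1}{2L}\nabla\phi_{2L}(x) = x^+ \in X$, so $\proj_X[x - \tfrac{1}{2L}\nabla\phi_{2L}(x)] = x^+$ and $\Ex(x,\nabla\phi_{2L}(x),2L) = 2L\|x-x^+\|$. For the strong measure, substituting the gradient formula into~\cref{def:our-measure} and parametrizing $u = x'-x$ reduces the problem to maximizing $2L\langle x^+-x,u\rangle - L\|u\|^2$ over $u$ with $x+u\in X$. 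The unconstrained maximum is attained at $u = x^+-x$, which \emph{is} feasible since $x^+ \in X$, and equals $L\|x^+-x\|^2$; hence $\Dx(x,\nabla\phi_{2L}(x),2L) = 2L\|x-x^+\|$ as well.

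\textbf{Part 2.} Applied at $\wh x$, the gradient formula immediately gives $\|x^+-\wh x\| = \|\nabla\phi_{2\Lxx}(\wh x)\|/(2\Lxx) \le \ex/(2\Lxx)$. For the subdifferential bound, I would invoke the first-order optimality condition of the $\Lxx$-regularized inner problem: there exists $\xi\in\partial\phi(x^+)$ with
\begin{equation*}
\langle \xi + 2\Lxx(x^+-\wh x),\, x'-x^+\rangle \ge 0 \quad \forall x'\in X,
\end{equation*}
equivalently $-\langle\xi,x'-x^+\rangle \le 2\Lxx\langle x^+-\wh x, x'-x^+\rangle$ on $X$. Plugging this into~\cref{def:our-measure} and bounding the resulting quadratic (even over the unconstrained $\XX$) by the Young-type inequality $2\langle x^+-\wh x, u\rangle - \|u\|^2 \le \|x^+-\wh x\|^2$ yields
\begin{equation*}
\Dx^2(x^+,\xi,2\Lxx) \le 4\Lxx^2\|x^+-\wh x\|^2 = \|\nabla\phi_{2\Lxx}(\wh x)\|^2 \le \ex^2.
\end{equation*}

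The argument is essentially mechanical; the only delicate points are (i)~justifying the envelope-theorem gradient formula under mere weak convexity of $\phi$ (handled by the strong convexity of the regularized inner objective), and (ii)~recognizing in Part~1 that the unconstrained maximizer defining $\Dx$ happens to lie in $X$ (because it equals $x^+ \in X$), which is what makes $\Dx$ collapse to $\Ex$ here, in contrast with the general gap between the two criteria emphasized in~\cref{rem:criteria-comparison}.
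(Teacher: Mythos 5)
Your proof is correct and takes essentially the same route as the paper: establish the gradient formula $\nabla\phi_{2L}(\wh x) = 2L(\wh x - x^+)$, observe that $x^+ \in X$ so that the maximizer/projection defining $\Dx$ and $\Ex$ coincides with $x^+$ for Part 1, and in Part 2 plug the first-order optimality condition of the regularized inner problem into the definition of $\Dx$ and close with a Young/Cauchy--Schwarz estimate. The only cosmetic difference is that you work through the $\Dx$ and $\Ex$ computations explicitly in Part 1, whereas the paper compresses them into a single line after noting $x^+ = \Pi_X(x^+)$.
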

This result is proved in appendix.
%Note that the final bound also implies a guarantee for the proximal gradient norm. %namely~$\Ex(x^+,\xi,2\Lxx) \le \ex$ for some~$\xi \in \partial \vphi(x^+)$.
\cref{prop:moreau-to-primal} motivates the task of finding a point~$\wh x$ with a small norm of the Moreau envelope~$\|\nabla \vphi_{2\Lxx}(\wh x)\|$. 
The recent work~\cite{thekumparampil2019efficient} proposes an algorithm that directly produces such a point in~$O(\ex^3)$ first-order oracle calls in the primally-unconstrained setup ($X = \XX$). 
The work~\cite{lin2020near} uses a different approach. They first find an~$O(\ex,\ey)$-FNE for~\eqref{opt:min-max} with respect to the {\em weak} criterion, i.e.,~$(\wh x, \wh y)$ such that~\cref{eq:stationarity} holds with~$\Dx, \Dy$ replaced with~$\Ex, \Ey$ respectively. 
Then they use the result~\cite[Prop.~4.12]{lin2019gradient} that claims to guarantee (again in the case~$X = \XX$) that~$\|\nabla \vphi_{2\Lxx}(\wh x)\| = O(\ex)$ whenever~$\ey = O(\ex^2)$. 
Let us rephrase the claim in~\cite{lin2019gradient}.
\begin{proposition}[{\cite[Prop.~4.12]{lin2019gradient}}]
\label{prop:fne-to-moreau-wrong}
Assuming~\cref{ass:smoothness} and~$X = \XX$, one has that
\begin{eq}
\label{eq:fne-to-moreau-wrong}
\|\nabla \vphi_{2\Lxx}(\wh x)\|^2 = O({\|\gx F(\wh x, \wh y)\|^2} + \Lxx \Ry \hEy),
\end{eq}
where~$\hEy := \Ey(\wh y, -\gy F(\wh x, \wh y), \Lyy)$.
In particular,~$\|\nabla \vphi_{2\Lxx}(\wh x)\| = O(\ex)$ provided that~$\|\gx F(\wh x, \wh y)\| \le \ex$ and~$\hEy \le \ex^2/(\Lxx\Ry)$.
\end{proposition}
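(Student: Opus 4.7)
The plan is to use two standard ingredients: the explicit Moreau-envelope formula for $\nabla\vphi_{2\Lxx}(\wh x)$, and the Lipschitz bounds in~\cref{ass:smoothness}, before tackling the conversion of the weak criterion $\hEy$ on $\wh y$ into distance-type information on $y$.

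First I would identify $\nabla\vphi_{2\Lxx}(\wh x)$ explicitly. Since $X = \XX$, the map $\vphi$ is $\Lxx$-weakly convex (by Danskin and smoothness of each $F(\cdot,y)$), so the minimization defining $\vphi_{2\Lxx}$ has a unique minimizer $x^+ := \argmin_{x' \in \XX}[\vphi(x') + \Lxx\|x' - \wh x\|^2]$, and part~2 of~\cref{prop:moreau-to-primal} gives $\nabla\vphi_{2\Lxx}(\wh x) = 2\Lxx(\wh x - x^+)$. Picking any $y^+ \in \argmax_{y \in Y} F(x^+, y)$ (nonempty by compactness of $Y$ and concavity of $F(x^+, \cdot)$), Danskin's theorem yields $\gx F(x^+, y^+) \in \partial \vphi(x^+)$, so the unconstrained first-order optimality of $x^+$ forces $\gx F(x^+, y^+) + 2\Lxx(x^+ - \wh x) = 0$. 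In particular, $\nabla\vphi_{2\Lxx}(\wh x) = \gx F(x^+, y^+)$.

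Next I would use the triangle inequality together with the Lipschitz bounds in~\cref{ass:smoothness}:
\[
\|\nabla\vphi_{2\Lxx}(\wh x)\| = \|\gx F(x^+, y^+)\| \le \|\gx F(\wh x, \wh y)\| + \Lxx \|x^+ - \wh x\| + \Lxy \|y^+ - \wh y\|.
\]
The middle term equals $\tfrac{1}{2}\|\nabla\vphi_{2\Lxx}(\wh x)\|$ by the identity above, so absorbing it into the left-hand side and squaring gives
\[
\|\nabla\vphi_{2\Lxx}(\wh x)\|^2 \le 8\,\|\gx F(\wh x, \wh y)\|^2 + 8\Lxy^2 \|y^+ - \wh y\|^2.
\]
Comparing with the target~\cref{eq:fne-to-moreau-wrong}, it remains to establish a bound of the form $\Lxy^2\|y^+ - \wh y\|^2 = O(\Lxx \Ry \hEy)$.

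The main obstacle is precisely this last step. The natural recipe combines (i) concavity of $F(x^+,\cdot)$ with optimality of $y^+$ on $Y$, yielding both $F(x^+,y^+) \ge F(x^+,\wh y)$ and the variational inequality $\lang \gy F(x^+, y^+), y - y^+ \rang \le 0$ for $y \in Y$; (ii) the diameter bound $\|y^+ - \wh y\| \le 2\Ry$; and (iii) the proximal-gradient interpretation of $\hEy$, namely that $\wh y^+ := \Pi_Y[\wh y + \Lyy^{-1}\gy F(\wh x, \wh y)]$ satisfies $\|\wh y - \wh y^+\| = \hEy/\Lyy$ together with its own variational inequality on $Y$. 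One would then swap $\gy F(x^+, \cdot)$ for $\gy F(\wh x, \cdot)$ at the cost of a term proportional to $\Lxy\|x^+ - \wh x\|$, which can be absorbed on the left as before. In the fully unconstrained case $Y = \YY$ the quantity $\hEy$ reduces to $\|\gy F(\wh x, \wh y)\|$ and a clean argument closes the gap via a Lipschitz comparison of gradients at the two maximizers. However, in the constrained case $Y \ne \YY$, the variational inequality defining $\wh y^+$ is only tested against elements of $Y$, and any attempt to convert the resulting inner-product bound into a bound on $\|y^+ - \wh y\|$ seems to introduce an unavoidable extra factor not present in~\cref{eq:fne-to-moreau-wrong}. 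This is precisely the issue the present paper flags and the reason it circumvents~\cref{prop:fne-to-moreau-wrong} by working instead with the strong criterion $\Dy$ in~\cref{sec:moreau}.
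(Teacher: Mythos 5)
The paper does not actually prove this proposition: it cites \cite[Prop.~4.12]{lin2019gradient}, then argues that the claim, as stated, is \emph{unprovable} when $Y\neq\YY$, and replaces it with the corrected \cref{prop:fne-to-moreau} (which carries the extra term $\Lxx(\hDy^2-\hEy^2)/\Lyy$). You correctly recognize this at the end of your proposal, which is good; but the specific decomposition you chose creates a separate, more basic difficulty that would block the argument even in the case the paper regards as fine.

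Your route reduces everything to showing $\Lxy^2\|y^+-\wh y\|^2=O(\Lxx\Ry\hEy)$, i.e.\ to showing that $\wh y$ is \emph{close in distance} to a maximizer $y^+$ of $F(x^+,\cdot)$. But under mere concavity in $y$ this cannot hold: a near-stationary point need not be near any maximizer. Take $h(y)=-\tfrac{\mu}{2}y^2$ with tiny $\mu>0$; then $\|\nabla h(\wh y)\|=\mu|\wh y|$ can be made arbitrarily small while $|\wh y - y^*|=|\wh y|$ stays bounded away from $0$, and $\Lxy^2|\wh y - y^*|^2$ has no relation to $\Lxx\Ry\hEy$. So the bound you need in step~3 is false in general, even for $Y=\YY$ (the ``clean argument via Lipschitz comparison of gradients'' you invoke there does not produce an upper bound on $\|y^+-\wh y\|$, only on gradient differences). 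The way Lin et al.\ (and the paper's corrected \cref{prop:fne-to-moreau}) avoid this is by never bounding distances: one first writes $\|\nabla\vphi_{2\Lxx}(\wh x)\|^2\le 2\Lxx\bigl[\vphi(\wh x)-\vphi(x^+)-\Lxx\|\wh x-x^+\|^2\bigr]$ via the $\Lxx$-strong convexity of the Moreau objective, then splits the \emph{function-value} gap $\vphi(\wh x)-\vphi(x^+)$ as $\bigl[F(\wh x,y^o)-F(\wh x,\wh y)\bigr]+\bigl[F(\wh x,\wh y)-F(x^+,\wh y)\bigr]$ with $y^o\in\Argmax_y F(\wh x,y)$, and controls the first bracket directly by concavity of $F(\wh x,\cdot)$ — this is exactly where \cref{eq:constrained-concavity-wrong} is used, and where the constrained case breaks, yielding the extra term in \cref{lem:constrained-concavity}. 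Concavity gives you control over value gaps, not distances; your decomposition asks for the latter and therefore does not close.

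Two smaller points. First, ``Picking any $y^+\in\Argmax_{y\in Y}F(x^+,y)$ \dots forces $\gx F(x^+,y^+)+2\Lxx(x^+-\wh x)=0$'' is not right for an arbitrary choice of $y^+$: the subdifferential $\partial\vphi(x^+)$ is the convex hull of $\{\gx F(x^+,y'):y'\in\Argmax\}$, and the particular subgradient $-2\Lxx(x^+-\wh x)$ need not be realized by a single $y'$. You should take $y^+$ to be the $y$-component of the saddle point of $\min_{x'}\max_y[F(x',y)+\Lxx\|x'-\wh x\|^2]$ (which exists by Sion's theorem); for that choice your identity holds. Second, your triangle-inequality manipulation is algebraically fine, but replacing it with a value-gap bound (as above) is what ultimately makes the $Y=\YY$ case go through.
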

Inspecting the results in~\cite{lin2020near}, we conclude that their algorithm outputs an~$(\ex,\ey)$-FNE (with respect to the weak measure~$\Ey$ in $y$) in~$\wt O(\Tx \Ty)$ oracle calls. 
By~\cref{prop:fne-to-moreau-wrong}, this translates to finding an~$\ex$-stationary point for the Moreau envelope in
\begin{eq}
\label{eq:moreau-complexity}
\wt O\left(\frac{\Gap{\Lxx^{\vphantom+}}^{3/2}{\Lyy^+}^{1/2}\Ry^{\vphantom+}}{\ex^3}\right)
\end{eq}
oracle calls. However, our careful inspection of the proof of~\cite[Prop.~4.12]{lin2019gradient} only allowed to verify~\cref{eq:fne-to-moreau-wrong} in the unconstrained case~$Y = \YY$ (and replacing~$\Ry$ with the distance~$\| \wh y - y^o\|$ for some~$y^o \in \Argmax_{y \in Y} F(\wh x, y)$), so that~$\hEy$ becomes~$\|\gy F(\wh x, \wh y)\|$. 
The underlying issue is that the proof relies on the bound
\begin{eq}
\label{eq:constrained-concavity-wrong}
F(\wh x, y^o) - F(\wh x, \wh y) \le \langle \wh \zeta_{\y}, y^o - \wh y \rangle,
\end{eq}
where~$\wh \zeta_{\y} := \Lyy(\Pi_{Y}[\wh y + \tfrac{1}{\Lyy} \gy F(\wh x, \wh y)] - \wh y)$ is the negative proximal gradient of~$-F(\wh x, \cdot)$ at~$\wh y$;
this gives the term~$\Ry  \Ey(\wh y, -\gy F(\wh x, \wh y), \Lyy)$ in~\cref{eq:fne-to-moreau-wrong} by Cauchy-Schwarz.
However,~\cref{eq:constrained-concavity-wrong} can be invalid when~$Y \ne \YY$.
The following result allows to rectify this.
%Indeed, we have 
%adjusts~\eqref{eq:cosntrained-concavity-wrong}.
%To this end, we prove the following result.
%Below we prove a technical 
%the following result demonstrates.
%and must be adjusted in the general case. 
%More precisely, in appendix we prove the following technical result.
\begin{lemma}
\label{lem:constrained-concavity} 
Let $h: Y \to \R$ be differentiable and concave, and define~$\zeta^L(y) := L(\Pi_{Y}[y + \tfrac{1}{L} \nabla h(y)] - y)$ for~$L > 0$.
Then for any~$y,y' \in Y$ and~$L > 0$, one has that
% For~$z \in Z$, define~$z^{+} = \proj_Z \left(z + \tfrac{1}{L}\nabla h(\wh y)\right)$.  
% Then we have that
\begin{eq}
\label{eq:constrained-concavity} 
h(y') - h(y) \le \langle \zeta^L(y), y' - y \rangle + \tfrac{1}{2L} \left[ \Dy^2(y, -\nabla h(y),L) - \Ey^2(y, -\nabla h(y),L) \right].
\end{eq}
%where~$\zeta^L(y) = L(\Pi_{Y}[y + \tfrac{1}{L} \nabla h(y)] - y)$. 
\end{lemma}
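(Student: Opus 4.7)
The plan is to exploit concavity to reduce the inequality to a bound on $\langle \nabla h(y), y' - y\rangle$, and then decompose this inner product into a part involving $\zeta^L(y)$ and a remainder that is controlled exactly by the gap $\Dy^2 - \Ey^2$. Throughout, let $\xi := \nabla h(y)$ and $p := \Pi_{Y}[y + \tfrac{1}{L}\xi]$, so that by definition $\zeta^L(y) = L(p-y)$ and $\Ey(y, -\xi, L) = \|\zeta^L(y)\| = L\|p - y\|$.

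First, since $h$ is concave and differentiable, $h(y') - h(y) \le \langle \xi, y' - y\rangle$. Next I would identify the maximizer in the definition of $\Dy^2(y, -\xi, L) = 2L\max_{y'' \in Y}[\langle \xi, y'' - y\rangle - \tfrac{L}{2}\|y''-y\|^2]$: the first-order optimality condition for this strongly concave quadratic is exactly $y^* = \Pi_{Y}[y + \tfrac{1}{L}\xi] = p$. Plugging $p$ in and using $\|\zeta^L(y)\|^2 = L^2\|p-y\|^2 = \Ey^2(y,-\xi,L)$, a one-line computation gives
\begin{equation*}
\tfrac{1}{2L}\left[\Dy^2(y,-\xi,L) - \Ey^2(y,-\xi,L)\right] = \langle \xi - \zeta^L(y), p - y\rangle.
\end{equation*}

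Now I would decompose
\begin{equation*}
\langle \xi, y' - y\rangle = \langle \zeta^L(y), y' - y\rangle + \langle \xi - \zeta^L(y), p - y\rangle + \langle \xi - \zeta^L(y), y' - p\rangle,
\end{equation*}
and show that the last term is nonpositive. This is precisely the projection inequality: for any $y' \in Y$, one has $\langle (y + \tfrac{1}{L}\xi) - p,\, y' - p\rangle \le 0$, and multiplying by $L>0$ gives $\langle \xi - \zeta^L(y), y' - p\rangle \le 0$. Combining the three displays with the concavity bound yields exactly~\eqref{eq:constrained-concavity}.

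There is no serious obstacle here — the result is essentially a careful bookkeeping of the projection inequality. The only subtlety, and the reason the claim in~\cite{lin2019gradient} fails when $Y \ne \YY$, is that one cannot drop the remainder term $\langle \xi - \zeta^L(y), y' - p\rangle$ without appealing to this projection inequality, and one cannot replace $\zeta^L(y)$ by $\xi$ in $\langle \zeta^L(y), y'-y\rangle$ for free; both gaps are exactly accounted for by $\tfrac{1}{2L}[\Dy^2 - \Ey^2]$, which vanishes in the unconstrained case $Y = \YY$ (where $p = y + \tfrac{1}{L}\xi$, so $\zeta^L(y) = \xi$ and both measures coincide with $\|\xi\|$).
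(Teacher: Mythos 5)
Your proof is correct and follows essentially the same path as the paper's: apply concavity, decompose $\langle \nabla h(y), y'-y\rangle$ into the $\zeta^L$-term plus a remainder, and control the remainder via the projection inequality. The one small refinement is that you identify $p = \Pi_Y[y + \tfrac{1}{L}\nabla h(y)]$ as the \emph{exact} maximizer in the definition of $\Dy^2$, so your key identity $\tfrac{1}{2L}[\Dy^2 - \Ey^2] = \langle \nabla h(y) - \zeta^L(y),\, p - y\rangle$ is an equality; the paper obtains the same quantity as an upper bound by plugging $y^+$ into the max without noting it is the argmax. Both arguments are the same modulo this cosmetic point, and the three-term split $y'-y = (y'-p)+(p-y)$ you use is equivalent to the paper's two-step estimate $\langle y'-y, \cdot\rangle \le \langle y^+-y,\cdot\rangle$.
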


\begin{rem}
\label{rem:moreau-tightness}
When~$Y = \YY$, the second term in the right-hand side of~\cref{eq:constrained-concavity} vanishes, and we recover~\cref{eq:constrained-concavity-wrong} by putting~$h(y) = F(\wh x,y)$.
Meanwhile, in the constrained case~\cref{eq:constrained-concavity} is tight up to a constant factor; moreover,~\cref{eq:constrained-concavity-wrong} can be violated with {\em arbitrary gap} due to 
the additional term in the right-hand side, which can be arbitrarily large (while the inner product term remains fixed).
Indeed, consider the following problem for~$a \ge 0$:
\[
\max_{y \in [-1,0]} [h(y) := -\tfrac{1}{2}{(y-a)^2}].
\]
Clearly,~$y^o = 0$ is the unique maximizer of~$h(\cdot)$ on~$Y = [-1,0]$.
On the other hand, by simple algebra we verify that, with~$L = 1$ (which corresponds to the smoothness of~$h$), any~$a \ge 0$ and~$\veps \in [0,1]$, the point~$\wh y = -\veps$ satisfies~$h(y^o) - h(\wh y) = \tfrac{1}{2}\veps^2 + a\veps$,~$\zeta^L(\wh y) = \veps$,
\[
\langle \zeta^L(\wh y), y^o - \wh y \rangle = \veps^2, \quad
\Ey^2(\wh y, -\nabla h(\wh y),L) = \veps^2, \quad 
\Dy^2(\wh y, -\nabla h(\wh y),L) = 2a\veps + \veps^2.
\] 
Thus, for this instance~\cref{eq:constrained-concavity} with~$y' = y^o$ and~$y = \wh y$ is almost attained: the left-hand side is equal to~$\tfrac{1}{2}\veps^2 + a \veps$ and the right-hand side to~$\veps^2 + a \veps$. 
Moreover, the term~$\tfrac{1}{2L}[\Dy^2(\wh y, -\nabla h(\wh y),L) - \Ey^2(\wh y, -\nabla h(\wh y),L)] = a\veps$ can be made arbitrarily large (by increasing~$a$) without changing the term~$\langle \zeta^L( \wh y), y^o - \wh y \rangle = \veps^2$.
\end{rem}
As we noted before, the error in~\cref{eq:constrained-concavity} seems to invalidate~\cref{prop:fne-to-moreau-wrong}, and thus the complexity estimate~\cref{eq:moreau-complexity}.
Indeed,~\cref{eq:fne-to-moreau-wrong} in fact gains the additional term under~$O(\cdot)$ %~$\tfrac{1}{2L}[{\hDy}^2 - {\hEy}^2]$ 
in the right-hand side, %with~${\hDy}^2 := \Dy(\wh y, -\gy F(\wh x, \wh y),L)$ 
and this term can be arbitrarily large\footnote{Our attempts to obtain an alternative proof of~\cref{prop:fne-to-moreau-wrong} without using~\cref{eq:constrained-concavity} have failed.} when~$\hEy \le \ey$.
Fortunately, the complexity estimate~\cref{eq:moreau-complexity} can be obtained in the fully constrained setup, by using that~\cref{alg:fsp} produces an~$(\ex,\ey)$-FNE in the {\em strong sense} (cf.~\cref{def:stationarity}). %allows us to obtain the following result.
%We obtain the following result.
%which allows to control the additional~$\tfrac{1}{2L}[\hDy^2 - \hEy^2]$ term.
\begin{proposition}
\label{prop:fne-to-moreau}
Assume~\cref{ass:smoothness} and let~$\hEy$ be as defined in~\cref{prop:fne-to-moreau-wrong}, then
\begin{eq}
\label{eq:fne-to-moreau}
{\|\nabla \vphi_{2\Lxx}(\wh x)\|^2} = O\left({\Dx^2(\wh x, \gx F(\wh x, \wh y), \Lxx)} + \Lxx\Ry \hEy + {\Lxx(\hDy{}^2 - \hEy{}^2)}/{\Lyy}\right), 
\end{eq}
where~$\hDy := \Dy(\wh y, -\gy F(\wh x, \wh y), \Lyy)$.
As a result,~$\|\nabla \vphi_{2\Lxx}(\wh x)\| = O(\ex)$ whenever~$(\wh x, \wh y)$ is~$(\ex,\ey)$-FNE for~\cref{opt:min-max}, in the sense of~\cref{eq:stationarity}, with
\begin{eq}
\label{eq:fne-to-moreau-concave-ey}
\ey
\le \min\bigg[ \frac{\ex^2}{\Lxx\Ry}, \ex\sqrt{\frac{\Lyy}{\Lxx}}\,\bigg].
\end{eq}
\end{proposition}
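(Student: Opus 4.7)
The plan is to use the identity $\|\nabla \vphi_{2\Lxx}(\wh x)\| = 2\Lxx\|\wh x - x^+\|$ from the second claim of~\cref{prop:moreau-to-primal}, where $x^+$ is the unique minimizer in~\cref{def:moreau-envelope} for $x = \wh x$, and reduce the goal to upper bounding $\|\wh x - x^+\|^2$. Since $\vphi$ is $\Lxx$-weakly convex, the function $x' \mapsto \vphi(x') + \Lxx\|x' - \wh x\|^2$ is $\Lxx$-strongly convex on $X$ and minimized at $x^+$, which immediately yields the lower bound
\[
\vphi(\wh x) - \vphi(x^+) \ge \tfrac{3\Lxx}{2}\|x^+ - \wh x\|^2.
\]

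To upper bound the left-hand side, I would use the trivial estimate $\vphi(x^+) \ge F(x^+, \wh y)$ and split
\[
\vphi(\wh x) - \vphi(x^+) \le \bigl[F(\wh x, y^o) - F(\wh x, \wh y)\bigr] + \bigl[F(\wh x, \wh y) - F(x^+, \wh y)\bigr],
\]
where $y^o \in \Argmax_{y \in Y} F(\wh x, y)$, so that $\vphi(\wh x) = F(\wh x, y^o)$. The second bracket is handled by $\Lxx$-smoothness of $F(\cdot, \wh y)$ combined with the primal stationarity criterion: unpacking~\cref{def:our-measure} at the feasible point $x' = x^+$ gives $\langle \gx F(\wh x, \wh y), \wh x - x^+\rangle \le \tfrac{\Lxx}{2}\|x^+-\wh x\|^2 + \tfrac{1}{2\Lxx}\Dx^2(\wh x, \gx F(\wh x, \wh y), \Lxx)$, whence
\[
F(\wh x, \wh y) - F(x^+, \wh y) \le \Lxx \|x^+ - \wh x\|^2 + \tfrac{1}{2\Lxx}\Dx^2(\wh x, \gx F(\wh x, \wh y), \Lxx).
\]
The first bracket is exactly where~\cref{lem:constrained-concavity} pays off: applied with $h = F(\wh x, \cdot)$, $y = \wh y$, $y' = y^o$, and $L = \Lyy$, followed by Cauchy-Schwarz together with $\|y^o - \wh y\| \le 2\Ry$, it yields
\[
F(\wh x, y^o) - F(\wh x, \wh y) \le 2\Ry\, \hEy + \tfrac{1}{2\Lyy}\bigl(\hDy{}^2 - \hEy{}^2\bigr).
\]

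Combining the three displays and absorbing the $\Lxx\|x^+-\wh x\|^2$ term into the strong-convexity lower bound gives $\tfrac{\Lxx}{2}\|x^+ - \wh x\|^2 \le \tfrac{1}{2\Lxx}\Dx^2(\wh x, \gx F(\wh x, \wh y), \Lxx) + 2\Ry\, \hEy + \tfrac{1}{2\Lyy}(\hDy{}^2 - \hEy{}^2)$, which multiplied by $4\Lxx^2$ is exactly the bound~\cref{eq:fne-to-moreau}. For the corollary, substituting the $(\ex,\ey)$-FNE guarantees $\Dx(\wh x,\gx F(\wh x,\wh y),\Lxx)\le\ex$, $\hDy \le \ey$ together with the elementary $\hEy \le \hDy$ (see~\cref{rem:criteria-comparison}) into~\cref{eq:fne-to-moreau} yields $\|\nabla \vphi_{2\Lxx}(\wh x)\|^2 = O(\ex^2 + \Lxx\Ry\ey + \Lxx\ey^2/\Lyy)$, whose three summands are all $O(\ex^2)$ precisely under the two conditions in~\cref{eq:fne-to-moreau-concave-ey}. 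The main obstacle is the first bracket: the naive proximal-gradient bound $F(\wh x, y^o) - F(\wh x, \wh y) \le \langle \zeta^{\Lyy}(\wh y), y^o - \wh y\rangle \le 2\Ry\, \hEy$, which is the one implicit in~\cref{prop:fne-to-moreau-wrong}, fails when $Y \ne \YY$ (see~\cref{rem:moreau-tightness}); the role of~\cref{lem:constrained-concavity} is to supply the missing correction $\tfrac{1}{2\Lyy}(\hDy^2 - \hEy^2)$ that survives in the constrained case and is absorbed into the final bound by the \emph{strong} stationarity assumption $\hDy \le \ey$.
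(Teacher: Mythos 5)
Your proof is correct and follows essentially the same route as the paper's: the identity $\nabla\vphi_{2\Lxx}(\wh x) = 2\Lxx(\wh x - x^+)$, the $\Lxx$-strong-convexity lower bound on $\vphi(\wh x)-\vphi(x^+)$, the same two-term decomposition of $\vphi(\wh x)-\vphi(x^+)$, the descent-lemma/Young bound for the $x$-bracket, and \cref{lem:constrained-concavity} for the $y$-bracket, followed by the same $\hEy \le \hDy$ substitution for the corollary. The only (harmless) discrepancies are a factor-of-two difference in the $\Ry\hEy$ term (you wrote $2\Ry\hEy$; the paper writes $\Ry\hEy$) and a bookkeeping slip in the final scaling (given your LHS of $\tfrac{\Lxx}{2}\|x^+-\wh x\|^2$ you should multiply through by $8\Lxx$ rather than $4\Lxx^2$), neither of which affects the $O(\cdot)$ conclusion.
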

Recalling~\cref{th:upper-bound} (cf.~\cref{eq:complexity-simple}), we conclude that~\cref{alg:fsp}, when run with %~$\ey$ set to the right-hand side of~\cref{eq:fne-to-moreau-concave-ey}, 
\begin{equation}
\label{eq:fne-to-moreau-concave-ey-final}
\ey
= \frac{\ex^2}{\Lxx\Ry},
\end{equation}
produces~$\wh x \in X$ that satisfies~$\|\nabla \vphi_{2\Lxx}(\wh x)\| = O(\ex)$ in
\begin{eq}
\label{eq:moreau-complexity-final}
\boxed{
\wt O\left(\frac{\Gap{\Lxx^{\vphantom+}}^{3/2}{\Lyy^+}^{1/2}\Ry^{\vphantom+}}{\ex^3} \right)
%\;  \frac{\Gap {\Lxx^{\vphantom+}}^{5/4}{\Lyy^+}^{1/2}{\Ry^{\vphantom+}}^{1/2}}{\ex^{5/2}{\Lyy^{\vphantom+}}^{1/4}}\right] \right) 
\; \text{oracle calls}.
}
\end{eq}
Indeed, under~\cref{eq:fne-to-moreau-concave-ey-final} there are two possibilities. 
If~$\ex^2/\Lxx = O(\Lyy{\vphantom 2} \Ry^2)$, then we have~\cref{eq:fne-to-moreau-concave-ey} and can apply~\cref{prop:fne-to-moreau}. 
On the other hand, in the case~$\Lyy^{\vphantom 2}\Ry^2 = O(\ex^2/\Lxx)$ we can directly use~\cref{eq:fne-to-moreau}
combined with the bound
$\Dy(y_t, -\gy F(x_t, y_t), \Lyy)  = O(\Lyy \Ry)$;
this bound follows from~\cref{th:devolder} (note that~$\bTy > 1$, cf.~\cref{eq:outer-loop-params}) combined with~\cref{eq:-fgm-near-stationarity-argument}.

}
\odima{
\section*{Acknowledgments}
We thank Babak Barazandeh for technical discussions, for discovering the issue with~\cite[Prop.~4.12]{lin2019gradient} (cf.~\cref{eq:constrained-concavity-wrong}) and for sketching the proof of~\cref{eq:constrained-concavity}. 
}

\footnotesize
\bibliographystyle{siamplain}
\bibliography{references}
%\clearpage

\normalsize

\vspace{0.2cm}

\appendix
\section{Deferred proofs}
\label{sec:deferred-proofs}

\subsection{Verification of~\cref{eq:inexact-oracle-for-psi}}
By concavity and~$(\Lyy^+ + \lamy)$-smoothness of~$\psi_t$, %one has
\[
0 \le -\psi_t(y') + \psi_t(y) + \lang \nabla \psi_t(y), y' - y \rang  \le \tfrac{1}{2}(\Lyy^+ + \lamy)\|y'-y\|^2, \quad \forall y,y' \in Y.
\]
By~\cref{eq:conceptual-updates-proof-dual,eq:approximation-accuracy-values,eq:inexact-oracle-psi-xi},
${\delta}/{4} \le \wt \psi_t^\delta(y) - \psi_t(y) \le {3\delta}/{4}$ for all~$y \in Y.$
On the other hand, by the second part of~\cref{eq:approximation-accuracy-points},
\[
\| \wt\nabla \psi_t(y) - \nabla \psi_t(y) \| = \left\| \partial_y F(\wt x_t(y),y) - \partial_y F(\wh x_t(y),y) \right\| \le \Lxy \| \wh x_t(y) - \wt x_t(y) \| \le \frac{\delta}{8\Ry},
\]
hence, as~$\|y'-y\| \le 2\Ry$ for~$y',y \in Y$, we get %~\cref{eq:approximation-accuracy-points},
$
-{\delta}/{4} \le \langle \wt\nabla \psi_t(y) - \nabla \psi_t(y), y' - y \rangle \le {\delta}/{4}.
$
We obtain~\eqref{eq:inexact-oracle-for-psi} by summing up the two-sided inequalities above. \hfill\proofbox

\subsection{Verification of~\cref{eq:recursive-bound}}

Let~$\vphi_t(x) = \textstyle\max_{y \in Y} \bF_t(x,y)$ be the primal function of the saddle-point problem in~\cref{eq:minmax-conceptual-updates}.
Then
\[
\vphi_t(x) - 2\Lyy^{\vphantom 2} \Ry^2 \le F_t(x,y) \le \vphi_t(x)
\]
by bounding the variation of a smooth function~$F(x,\cdot)$ over~$y \in Y$, whence
\begin{align}
F_t(x_{t-1},y) - \min_{x \in X} F(x,y) 
&\le \vphi_t(x_{t-1}) - \min_{x} \vphi_t(x) + {2\Lyy^{\vphantom 2} \Ry^2} \nn
\label{eq:recursive-proof-first}
&\le \vphi_t(x_{t-1}) - \min_{x \in X} \vphi(x) + {2\Lyy^{\vphantom 2} \Ry^2} + 2\bey \Ry,
%&\le \vphi_{t-1}(x_{t-1}) - \min_{x \in X} \vphi(x) + {2\Lyy^{\vphantom 2} \Ry^2} + 2\bey \Ry,
\end{align}
where we used that~$\bF_t(x,y) \ge F(x,y) - 2\ey \Ry$. %and then~$\vphi_{t}(x_{t-1}) = \vphi(x_{t-1}) \le \vphi_{t-1}(x_{t-1})$, cf.~\cref{eq:minmax-conceptual-updates}.
Thus, it only remains to prove that~$\vphi_{t}(x_{t-1})$ decreases in~$t$ up to certain error (since~$\vphi_{1}(x_{0}) \le \vphi(x_0)$). 
To this end, we proceed by induction. 
The base is obvious:~\cref{eq:recursive-bound} is satisfied when~$t=1$ since
\[
\vphi_1(x) - 2\Theta \le \bF_1(x,y) \le \vphi_1(x),
\]
and~$\vphi_{1}(x_{0}) \ge \vphi(x_0) - 2\bey \Ry$.
Now, assume that~\cref{eq:recursive-bound} was satisfied at steps~$\tau \in [t-1]$, so that our analysis of these steps was valid.
Then, by part~\proofstep{5} of the proof of~\cref{th:upper-bound}, in all these previous steps, including step~$t-1$, 
the saddle-point problem~\cref{eq:minmax-conceptual-updates} has been solved up to accuracy~$O(\ex^2/\Lxx)$ in primal gap:
\begin{eq}
\label{eq:primal-gap-bound}
\vphi_{\tau}(x_\tau) - \min_{x} \vphi_\tau(x) \le \frac{0.19\ex^2}{\Lxx}, \quad \tau \in [t-1],
\end{eq}
cf.~\cref{eq:iteration-acc-primal-gap}.
On the other hand, one can easily see that~$\vphi_{\tau}(x_{\tau-1}) \le \vphi_{\tau-1}(x_{\tau-1})$ for all~$\tau \in [\bTx]$, cf.~\cref{eq:minmax-conceptual-updates}.
Combining the two inequalities sequentially, we get
\[
\begin{ald}
\vphi_t(x_{t-1}) \le \vphi_{t-1}(x_{t-2}) + \frac{0.2\ex^2}{\Lxx} 
%\le \vphi_{1}(x_{0}) + \frac{0.2(t-1)\ex^2}{\Lxx} 
&\le \vphi(x_{0}) + \frac{0.2\bTx\ex^2}{\Lxx} 
\le \vphi(x_{0}) + 2\Gap + 4\ey\Ry.
\end{ald}
\]
Combining this with~\cref{eq:recursive-proof-first}, we arrive at~\cref{eq:recursive-bound}.  \hfill\proofbox

\odima{
\subsection{Proof of~\cref{prop:moreau-to-primal}}
First observe that~$\nabla \phi_{2L}(\wh x) = 2L (\wh x - x^+)$ for any~$\wh x \in X$ by Danskin's theorem. 
Thus,~$x^+ = \wh x - \tfrac{1}{2L}\nabla \phi_{2L}(\wh x)$. This implies the first claim of the Proposition: indeed,~$x^+ \in X$, so that~$x^+ = \Pi_{X}(x^+)$ and hence
$
\Dx(\wh x,\nabla\phi_{2L}(\wh x),2L) = \Ex(x,\nabla\phi_{2L}(\wh x),2L) = \|\nabla\phi_{2L}(\wh x)\|.
$
The first part of the second claim is obvious. For the second part, note that~\cref{def:moreau-envelope} is a convex minimization problem by the weak-convexity of~$\phi$, and the first-order optimality condition for it is
\begin{eq}
\label{eq:moreau-first-order-optimality}
\exists \xi \in \partial \phi(x^+): \;\; \lang \xi + 2L(x^+ - \wh x), x - x^+ \rang \ge 0, \;\; \forall x \in X.
\end{eq}
%note that here we used the weak convexity of~$\vphi(\cdot)$.
For such~$\xi$, and assuming that~$\|\nabla\phi_{2L}(\wh x)\| \le \ex$, we have that
\[
\begin{ald}
\Dx^2(x^+,\xi,2L) 
&= 4L \max_{x \in X} \left[ -\langle \xi, x - x^+ \rangle - L \|x - x^+\|^2 \right] \\
&\le 4L^2 \max_{x \in X} \left[ 2 \langle x^+ - \wh x, x - x^+ \rangle - \|x - x^+\|^2 \right] \le 4L^2 \|x^+ - \wh x\|^2 \le \ex^2.
\end{ald}
\]
Here we first used~\eqref{eq:moreau-first-order-optimality} and then the Cauchy-Schwarz inequality. %and finally that~$\|x^+ - \wh x\| \le \ex/(2\Lxx)$.
\proofbox

\subsection{Proof of~\cref{lem:constrained-concavity}}
By concavity~$h(y') - h(y) \leq \langle  y' - y, \nabla h(y) \rangle$; thus, it suffices to prove that
\begin{eq}
\label{eq:constrained-first} 
\langle y' - y, \zeta \rangle \le 
L \langle y' - y, y^+ - y\rangle + \tfrac{1}{2L}[\Dy^2(y,-\zeta,L) - \Ey^2(y,-\zeta,L)]
\end{eq}
where~$y^+ := \proj_{Y}(y+\tfrac{1}{L}\zeta)$, for arbitrary~$y',y,\zeta \in Y,$ and~$L > 0$. 
Indeed, then~\cref{eq:constrained-concavity} follows by applying~\eqref{eq:constrained-first} to~$\zeta = \nabla h(y)$ so that~$\zeta^L(y)=L(y^+-y)$.
Now, observe that
\[
\label{eq:auxiliary}
\begin{aligned}
   \langle y' - y, \zeta \rangle 
%&= \langle y' - y, \zeta + L(y^{+} - y - y^{+} + y)\rangle
= L\langle y' - y,  y^{+} - y  \rangle + \langle y' - y, \zeta - L (y^{+} - y)  \rangle
\end{aligned}
\]
and~$\langle y' - y, \zeta - L(y^{+} - y)  \rangle \le \langle y^{+} - y, \zeta - L(y^{+} - y)  \rangle$ by the projection lemma (see, e.g.,~\cite[Lem.~3.1]{bubeck2014theory}).
Finally,
%$
%\ell\langle z' - z,  z^{+} - z  \rangle 
%\le \| z' - z \| \, \Ey(y,\zeta,L)
%$
%by Cauchy-Schwarz and~\eqref{def:weak-measure}. 
\begin{eqnarray*}
&\quad 
	\langle y^{+} - y, \zeta - L(y^{+} - y)  \rangle 
=
	\langle \zeta, y^{+} - y \rangle - L \|y^{+} - y \|^2 \\ 
&\le 
	\max_{w \in Y} \left[ \langle \zeta, w - y \rangle - \tfrac{L}{2}\|w - y \|^2 \right] - \tfrac{L}{2} \|y^{+} - y\|^2
= 
    \tfrac{1}{2L}[\Dy^2(z,-\zeta, L) - \Ey^2(z,-\zeta, L)].
\; \proofbox
\end{eqnarray*}
%Here in the first inequality we used Lemma~\ref{lem:projection}, 
%and in the end we recalled Defs.~\eqref{def:strong-measure} and~\eqref{def:weak-measure}. 

\subsection{Proof of~\cref{prop:fne-to-moreau}}
%As in the proof of~\cref{prop:moreau-to-primal}, from~\eqref{def:moreau-envelope} we get
By the second claim of~\cref{prop:moreau-to-primal}, we have
$
%\label{eq:moreau-optimality}
\nabla \vphi_{2\Lxx}(\wh x) = 2\Lxx(\wh x - x^+),
$
thus we can focus on bounding~$\tfrac{1}{2}\Lxx\|\wh x - x^+\|^2$.
%thus verifying the equality in~\cref{eq:fne-to-moreau}. 
%where~$x^+(x)$ is the unique minimizer in~\eqref{def:moreau-envelope}. 
%(For uniqueness, observe that~$\phi(x)$ is~$\Lxx$-weakly convex due to~\eqref{ass:grad-xx}, hence the objective in~\eqref{def:moreau-envelope} is strongly convex with modulus~$1/\lam - \Lxx = \Lxx$.) Thus it remains to verify that~$\|\wh x - x^+(\wh x)\| \le \ex/\Lxx$ in each case. 
To this end, the~$\Lxx$-strong convexity of the function~$\vphi(\cdot) + \Lxx\|\cdot - \wh x\|^2$ (minimized at~$x^+$) yields
%\begin{equation}
%\label{eq:moreau-whole-gap}
$
\tfrac{1}{2}\Lxx\|\wh x - x^+\|^2 
\le \vphi(\wh x) - \vphi(x^+) - \Lxx \|x^+ - \wh x\|^2.
$
%\end{equation}
%We shall now bound the right-hand side.
Moreover, we clearly have
\[
\vphi(\wh x) - \vphi(x^+) 
\le F(\wh x, y^o) - F(\wh x, \wh y) + F(\wh x, \wh y) - F(x^+, \wh y) 
%- \Lxx \|x^+ - \wh x\|^2
\]
for~$y^o \in Y$ such as~$F(\wh x, y^o) = \vphi(\wh x)$.
Now, by the descent lemma (due to~\cref{ass:smoothness}) we get
\[
\begin{aligned}
	F(\wh x, \wh y) - F(x^+, \wh y) - \Lxx \|x^+ - \wh x\|^2
&\le 
	-\lang \gx F(\wh x, \wh y), x^+ - \wh x \rang - \tfrac{1}{2} \Lxx \|x^+ - \wh x\|^2 \\
&\le \tfrac{1}{2\Lxx}\Dx^2(\wh x, \gx F(\wh x, \wh y), \Lxx).
\end{aligned}
\]
On the other hand, applying~\cref{lem:constrained-concavity} to~$h(\cdot) = F(\wh x, \cdot)$ with~$L = \Lyy$ results in
\[
F(\wh x, y^o) - F(\wh x, \wh y) 
\le \Ry \hEy + \tfrac{1}{2\Lyy}{[\hDy{}^2 - \hEy{}^2]}.
\]
Combining the results obtained so far, we arrive at~\cref{eq:fne-to-moreau}. 
The second claim of the lemma follows by using that~$\hEy \le \hDy$, and requiring that~$\max[\hDy\Lxx\Ry, \hDy{}^2\Lxx/\Lyy] \le \ex^2.$
\proofbox
}

\section{Extension to non-Euclidean geometries}
\label{sec:non-euclidean}

Here we do \textit{not} assume the norm~$\|\cdot\|$ to be Euclidean (unless explicitly stated).

\subsection{Near-stationary points of a convex function}

The first challenge when extending~\cref{alg:fsp} to the non-Euclidean setup arises already in the sub-problem of finding a near-stationary point of a smooth and convex function.
Therefore, we first focus on this problem in isolation.
Given a norm~$\|\cdot\|$ on~$\R^d$ and its dual norm~$\|\cdot\|_*$, consider the problem of finding~$\veps$-first-order-stationary point~$\wh z \in \R^d$ of function~$f: \R^d \to \R$, i.e., such that~$\|\nabla f(\wh  z)\|_* \le \veps$. 
We assume that~$f$ is convex and has~$L$-Lipschitz gradient with respect to~$\|\cdot\|$, i.e.,
\[
\| \nabla f(z') - \nabla f(z) \|_* \le \|z' - z\|, \quad \forall z',z \in \R^d,
\]
and that at least one such~$\wh z$ belongs to the origin-centered $\|\cdot\|$-norm ball with~radius~$R$.

Recall that, in the Euclidean case, 
%i.e., when~$\|\cdot\| = \|\cdot\|_* = \|\cdot\|_2$, 
the recipe of Nesterov~\cite{nesterov2012howto} is to add the regularizer~$r_\veps(z) = \frac{\veps}{2R} \|z\|^2$, observing that the regularized function~$f_{\veps}$ has two properties:
\begin{itemize}
\item[$(i)$]
$f_{\veps}$ has~$(L+\veps)$-Lipschitz gradient (since~$r_\veps(z)$ has~$\veps$-Lipschitz gradient) and is~$\veps$-strongly-convex (since $r_\veps(z)$ is strongly convex). 
\item[$(ii)$] 
The gradient~$\nabla f_\veps$ uniformly approximates~$\nabla f$ with respect to~$\|\cdot\|_* = \|\cdot\|_2$:
\begin{eq}
\label{eq:lipschitz-gradient-general}
\| \nabla f_\veps(z) - \nabla f(z) \|_2 \le \frac{\veps \|z\|_2}{R} \le \veps.
\end{eq}
\end{itemize}
Property~$(ii)$ allows to search for approximate stationary points of~$f_\veps$ instead of~$f$, whereas~$(i)$ guarantees that restarted FGM (\cref{alg:fgm-restart}) finds such a point in~$\wt O(\sqrt{\kappa})$ queries of~$\nabla f(\cdot)$ in total, where~$\kappa = L/\veps$, which results in the complexity bound %~$\wt O(1/\sqrt{\veps})$ complexity, namely,%
\begin{eq}
\label{eq:non-euclidean-complexity-euclidean}
T_\veps = \wt O\big(\sqrt{{LR}/{\veps}}\big).
\end{eq}
This complexity bound is optimal up to a logarithmic factor in the Euclidean case~\cite{nesterov2012howto}. 

In the setup with a non-Euclidean proximal geometry, one would expect the complexity bound~\eqref{eq:non-euclidean-complexity-euclidean} to be preserved. 
More precisely, assume that the norm~$\|\cdot\|$, now not necessarily Euclidean, admits a \textit{distance-generating function (\dgf)}~$\omega: \ZZ \to \R$ replacing the squared norm~$\tfrac{1}{2} \|\cdot\|_2^2$ in the Euclidean case, with the following three properties~(see,~e.g.,~\cite{optbook1,nesterov2013first,algorec2018arxiv} and references therein):\footnotemark
\footnotetext{Here we first focus on the unconstrained setup for the sake of simplicity; the case where~$z$ lives on a ``simple'' convex body in~$\R^d$ can be treated in a similar vein, and is postponed to~\cref{sec:bregman-constrained}.}

1) The function~$\omega(\cdot)$ is convex, admits a continuous selection of subgradients (denoted~$\nabla \omega(z)$ later on), and has strong convexity modulus~$1$ w.r.t.~$\|\cdot\|$.

2) One can easily solve (explicitly or to high accuracy) optimization problems of the form~$\min_{z'} [\lang \zeta, z' \rang + \omega(z')]$, where~$\zeta$ is an arbitrary linear form (i.e., element of the dual space identified with~$\R^d$ by Riescz theorem), and~$\lang \cdot, \cdot \rang$ is the duality pairing (identified with the canonical dot product on~$\R^d$). Equivalently, one requires computational tractability of the problem
\[
\min_{z' \in \R^d} [\lang \zeta, z' \rang + D_{\omega}(z',z)],
\]
where~$D_{\omega}(z',z) := \omega(z') - \omega(z) - \lang \nabla \omega(z), z' - z \rang$ is the Bregman divergence generated by~$\omega$.
The fulfillment of these requirements is guaranteed by working with~\dgf's that are coordinate-separable (such as entropy on the non-negative orthant or~$\|\cdot\|_p^p$ for~$p \ge 1$) or ``quasi-separable'' (e.g., compositions of a separable function and a monotone map on~$\R$), such as~$\|\cdot\|_p^2$ with~$p \ge 1$.
 
3) Finally, we assume that~$\omega$ is minimized at the origin (and~$\omega'(x) = 0$ is included in the continuous selection ob sugradients), and satisfies the following \textit{quadratic growth} condition: the {\em $\omega.$-radius} functional~$\Omega[\cdot]$, defined as
\[
\Omega[Z] := \max_{z \in Z} \omega(z) - \min_{z' \in Z} \omega(z'),
\]
for compact subsets of~$\R^d$, satisfies
\begin{eq}
\label{eq:quadratic-growth}
\Omega[Z_r(0)] \le r^2 \wt O_d(1), \quad \forall r \ge 0,
\end{eq}
where~$Z_r(z) := \{ z': \|z' - z\| \le r\}$, and~$\wt O_d(1)$ is a logarithmic factor in~$d$.
In other words,~$\Omega[Z_r]$ grows as the squared radius of the~$\|\cdot\|$-ball, mimicking the squared norm~$\tfrac{1}{2}\|\cdot\|^2$ in this respect. 
Note also that the same bound holds for~$D_{\omega}(z, 0) \le \Omega[Z_r(0)]$ for all~$z \in Z_r(0)$. Moreover, these conditions can be ``re-centered'' to arbitrary point~$z_0$ by replacing~$\omega(\cdot)$ with the shifted~\dgf
\begin{eq}
\label{eq:dgf-recentering}
\omega_{z_0}(\cdot) := \omega(z - z_0)
\end{eq}
which is minimized at~$z_0$ and satisfies~\cref{eq:quadratic-growth} with~$Z_r(z_0)$ instead of~$Z_r(0)$; the previous properties hold for~$\omega_{z_0}$ as well. 
Here we note that the ``slow growth'' property is not required to obtain convergence guarantees in terms of the~$\omega$-radius; rather, it is needed to ``translate'' such guarantees to those in terms of the~$\|\cdot\|$-norm distance to optimum. 
Another remark is that the balls~$Z_r$ here are only allowed to be centered in the origin (i.e., in the minimum of~$\omega$), which makes the condition significantly less restrictive than that in~\cite{dang2015stochastic} where~\eqref{eq:quadratic-growth} is required to hold for balls with arbitrary centers, not only those centered at the~\dgf~minimizer. 
Note that the latter condition implies the Lipschitzness of~$\nabla \omega$ with respect to~$\|\cdot\|$, whereas the former does not; we will revisit this circumstance in~\cref{sec:bregman-prox-point}.

We call any~\dgf~satisfying the above three properties {\em compatible with~$\|\cdot\|$}. 
Whenever one can find a compatible~\dgf, the usual recipe is to modify the ``Euclidean'' algorithm by replacing the Euclidean prox-mapping~\cref{eq:grad-map} with its generalization:
\begin{eq}
\label{eq:grad-map-general}
\prox_{z,\omega}(\zeta) := \argmin_{z'} \left[ \lang \zeta, z'\rang + D_{\omega}(z',z) \right],
\end{eq}
which corresponds to replacing the gradient descent step with so-called mirror descent step (\cite{nemirovsky1983problem}) -- ``steepest descent'' with respect to the~\dgf~that takes into account the geometry of~$\|\cdot\|$. 
For many standard primitives in convex optimization, such a recipe results in the desirable outcome: the distance to optimum~$R$ and the Lipschitz constant~$L$ get replaced with their~$\|\cdot\|$-norm counterparts. In particular, this is the case for FGM (\cref{alg:fgm}) as Theorem~\ref{th:devolder} generalizes almost verbatim.

\begin{theorem}[{\cite[Thm.~5 and~Eq.~(42)]{devolder2014first}}]
\label{th:devolder-general}
Assume~$f$ is convex, has~$L$-Lipschitz gradient with respect to the norm~$\|\cdot\|$, cf.~\cref{eq:lipschitz-gradient-general}, is and minimized at~$z^*$ such that~$\|z^* - z_0 \| \le R$.
%\footnote{To simplify the exposition, we assume initialization in the prox-center, i.e.,~$z_0 = \argmin_z[\omega(z)]$.}
Consider running Algorithm~\ref{alg:fgm}, with prox-mappings in lines 4 and 8 replaced by the generalized prox-mapping~\cref{eq:grad-map-general} with respect to the~\dgf~$\omega_{z_0}$ (the re-centered to~$z_0$ compatible~\dgf,~$\omega$,~cf.~\cref{eq:dgf-recentering}), with stepsize~$\gam = 1/L$, and~$\delta$-inexact oracle in the sense of~\cref{def:inexact-oracle} (with~$\|\cdot\|$ being the given norm). 
Then 
%the guarantees in~\cref{th:devolder} remain valid, in particular,
\begin{eq}
\label{eq:fgm-convergence-general-with-error}
f(z_T) - f(z^*) \le \frac{4 L \Omega}{T^2} + 2\delta T \le \frac{4\wt O_d(1) LR^2}{T^2} + 2\delta T,
\end{eq}
where~$\Omega := \Omega[Z_R(z_0)]$ is the~$\omega$-radius of the~$z_0$-centered ball containing~$z^*$.
Thus,
\begin{eq}
\label{eq:fgm-convergence-general}
f(z_T) - f(z^*) \le \frac{5L \Omega}{T^2} \le \frac{5 \wt O_d(1) LR^2}{T^2} 
\;\; \text{whenever} \;\;
\delta \le \delta_T := \frac{L\Omega}{2T^3}.
\end{eq}
\end{theorem}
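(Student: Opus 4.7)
The plan is to adapt the Devolder-Glineur-Nesterov analysis~\cite{devolder2014first} of inexact-oracle FGM to the Bregman setting, exploiting that the Euclidean proof uses the prox-mapping only through two properties that survive intact: (a) the three-point inequality
\[
\langle \zeta, u^+ - z \rangle \le D_{\omega_{z_0}}(z, u) - D_{\omega_{z_0}}(z, u^+) - D_{\omega_{z_0}}(u^+, u), \quad \forall z \in \R^d,
\]
satisfied by $u^+ = \prox_{u, \omega_{z_0}}(\zeta)$; and (b) the lower bound $D_{\omega_{z_0}}(z', z) \ge \tfrac{1}{2}\|z'-z\|^2$ coming from the 1-strong convexity of $\omega$ w.r.t.\ $\|\cdot\|$. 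In the Euclidean case, $D_{\omega_{z_0}}(z', z) = \tfrac{1}{2}\|z' - z\|_2^2$ and both are automatic; in general they become the only geometric ingredients of the analysis.

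First I would reproduce the DGN estimating-sequence construction with the initial prox term $\phi_0(z) = L \cdot D_{\omega_{z_0}}(z, z_0)$ replacing $\tfrac{L}{2}\|z - z_0\|_2^2$. The convex combinations $v_{k+1} = \tau_k u_k + (1-\tau_k) z_k$ and $z_{k+1} = \tau_k w_{k+1} + (1-\tau_k) z_k$, together with the coefficient recursion $\tau_t = 2(t+2)/[(t+1)(t+4)]$ built into~\cref{alg:fgm}, are algebraic identities independent of the geometry. The descent-lemma step
\[
f(z_{k+1}) \le \wt f(v_{k+1}) + \langle \wt\nabla f(v_{k+1}), z_{k+1} - v_{k+1} \rangle + \tfrac{L}{2}\|z_{k+1}-v_{k+1}\|^2 + \delta
\]
uses smoothness with respect to~$\|\cdot\|$, and property~(b) allows the quadratic term to be absorbed into $L \cdot D_{\omega_{z_0}}$ so that it aligns with the telescoping term produced by~(a). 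Combining these yields the standard DGN recursion
\[
A_T [f(z_T) - f(z^*)] \le L \cdot D_{\omega_{z_0}}(z^*, z_0) + 2\delta \sum_{k=0}^{T-1} A_{k+1},
\]
with the coefficients satisfying $A_T \ge T^2/4$ and $\sum_{k=0}^{T-1} A_{k+1} \le A_T \cdot T$ (exactly as in~\cite{devolder2014first}).

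To close, I would use that $\omega$ is minimized at the origin, so $\omega_{z_0}$ is minimized at $z_0$ with $\nabla \omega_{z_0}(z_0) = 0$, giving
\[
D_{\omega_{z_0}}(z^*, z_0) = \omega_{z_0}(z^*) - \omega_{z_0}(z_0) \le \Omega[Z_R(z_0)] =: \Omega,
\]
where the inequality uses $z^* \in Z_R(z_0)$ and interprets $\Omega[\cdot]$ with respect to the \dgf actually used by the algorithm. Substituting into the recursion yields the first inequality in~\cref{eq:fgm-convergence-general-with-error}; the second inequality follows from the quadratic-growth condition~\cref{eq:quadratic-growth}. Finally,~\cref{eq:fgm-convergence-general} is obtained by requiring $\delta \le L\Omega/(2T^3)$, which makes $2\delta T \le L\Omega/T^2$ and merges with the $4L\Omega/T^2$ term. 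The main subtlety will be checking that the DGN descent step retains its form after the geometry change, i.e., that the $D_{\omega_{z_0}}$ term from the prox inequality~(a) and the $\tfrac{L}{2}\|\cdot\|^2$ term from smoothness of $f$ combine cleanly; this is precisely the standard accelerated-mirror-descent manipulation, and it dovetails with the inexact-oracle bookkeeping of~\cite{devolder2014first} without further complication.
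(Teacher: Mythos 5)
The paper does not prove this theorem; it is stated purely as a citation to~\cite[Thm.~5 and Eq.~(42)]{devolder2014first}, with the surrounding text merely remarking that Theorem~\ref{th:devolder} ``generalizes almost verbatim'' once the Euclidean prox-mapping is replaced by~\eqref{eq:grad-map-general}. Your reconstruction correctly reproduces the Devolder--Glineur--Nesterov estimating-sequence argument in the Bregman setting: it isolates the only two geometry-dependent facts the analysis uses (the three-point property of the Bregman prox-step and the lower bound $D_{\omega_{z_0}}(z',z)\ge\tfrac12\|z'-z\|^2$ from $1$-strong convexity of~$\omega$), keeps the coefficient recursion and inexact-oracle bookkeeping unchanged, and handles the one point that genuinely requires care, namely reading $D_{\omega_{z_0}}(z^*,z_0)\le\Omega[Z_R(z_0)]$ with respect to the re-centered \dgf\ $\omega_{z_0}$, after which the quadratic-growth condition~\eqref{eq:quadratic-growth} converts the $\omega$-radius into~$\wt O_d(1)R^2$. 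This is exactly the chain of reasoning the paper implicitly relies on when it cites the result, so there is nothing to flag beyond the fact that, in a fully self-contained writeup, one would still want to verify explicitly that the specific $\tau_t$-schedule in Algorithm~\ref{alg:fgm} yields the coefficient bounds $A_T\ge T^2/4$ and $\sum_{k<T}A_{k+1}\le A_T\,T$, since those are what pin down the constants $4$ and $2$.
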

Returning to our problem of finding a near-stationary point of~$f(\cdot)$, the reasonable approach would be to regularize~$f$ with the term
\begin{eq}
\label{eq:regularizer-general}
r_{\veps}(z) = \frac{\veps \omega(z)}{\sqrt{\Omega}}, 
\end{eq}
which reduces to~$\frac{\veps}{2R} \|z\|_2^2$ in the Euclidean setup with~$\tfrac{1}{2}\|\cdot\|_2^2$ used as \dgf \,
However, we immediately see that neither of the properties ~$(i), (ii)$ remain valid.
\begin{itemize}
\item 
Indeed, while the regularized function~$f_\veps(z)$ is strongly convex with respect to~$\|\cdot\|$, its gradient can be non-Lipschitz: in fact, the existence of functions that are \textit{strongly convex and smooth at the same time,} with near-constant condition number, is quite special for the Euclidean norm. 
\item
As for the property~$(ii)$, it is again a ``fortunate coincidence'' that in the Euclidean case~$\nabla \omega(z) \equiv z$ and~$\|\cdot\|_* \equiv \|\cdot\|$, whence~$\|\nabla r_\veps (z) \|_* \le \veps$ on~$Z_R(0)$.
\end{itemize}

The first of these issues is easy to fix: instead of treating~$f_\veps$ as a smooth function, which it is not anymore, 
one can treat it as a composite function with~$L$-smooth part~$f$ and a non-smooth but ``simple'' term~$r_\veps$, simplicity being guaranteed by the compatibility of~$\omega$. 
As such, one can exploit the ``tolerance'' of Algorithm~\ref{alg:fgm} to such composite objectives: one can use the inexact gradient oracle for~$f$, rather than for~$f_\veps$, instead incorporating~$r_{\veps}$ into the prox-mapping, i.e., replacing~\cref{eq:grad-map-general} with
\[
\begin{ald}
\prox_{z,\omega,\veps}(\zeta) 
%&:= \argmin_{z'} \left[ \lang \zeta, z'\rang + D_{\omega}(z',z) + r_\veps(z')\right] 
&:=\argmin_{z'} \left[ \lang \zeta, z'\rang + D_{\omega}(z',z) + \frac{\veps}{\sqrt{\Omega}} D_{\omega}(z',z_0)\right].
\end{ald}
\]
As shown in~\cite[Sec.~6.3 and Thm.~8]{devolder2011stochastic},~\cref{th:devolder-general} generalizes to this most general setup: the guarantees~\cref{eq:fgm-convergence-general-with-error}-\cref{eq:fgm-convergence-general} remain valid, with~$f$ in the left-hand side replaced by~$f_\veps$, and~$z^*$ being the minimizer of~$f_\veps$. 
As a result, using the strong convexity of~$f_\veps$, we can proceed with the same restart scheme as before (\cref{alg:fgm}). 
We now state the appropriate modification of~\cref{cor:fgm-restart-bound}.

\begin{corollary}
\label{cor:fgm-restart-bound-general}
Let~$f_{\lam}$ be a composite function given by
\[
f_{\lam\sqrt{\Omega}}(z) := f(z) + \lam D_{\omega}(z,z_0),
\]
with~$\lam \ge 0$, and~$f$ having~$L$-Lipschitz gradient with respect to~$\|\cdot\|$. 
Given~$\veps > 0$, run~\cref{alg:fgm-restart} on ~$f_{\lam \sqrt{\Omega}}$ with~$\gam = 1/L$, parameters~$T,S$ satisfying
\begin{eq}
\label{eq:fgm-restart-params-general}
T \ge \sqrt{40 \wt O_d(1) L/\lambda}, \quad S \ge \log_{2}\left({3L\sqrt{\Omega}}/{\veps} \right), 
\end{eq}
where~$\wt O_d(1)$ is the logarithmic factor in~\cref{eq:fgm-convergence-general-with-error}, and~$\delta \le \delta_T$, cf.~\eqref{eq:fgm-oracle-accuracy-bound}.
Then~$z^{S}$ satisfies
\begin{eq}
\label{eq:fgm-approximation-bounds-general}
\begin{ald}
\|z^{S}- z^*\| \le \frac{\veps}{3L}, \;\; 
f_{\lam \sqrt{\Omega}}(z^{S}) - f_{\lam \sqrt{\Omega}}(z^*) \le \frac{\veps^2}{18L}, \;\; 
%L\left\|z^{S}- \prox_{z^{S},Z}\left(\tfrac{1}{L}\nabla f(z^{S})\right) \right\| &\le&  {\veps}, \quad \quad
%\text{and} \quad  
\|\nabla f(z^S) - \nabla f(z^*) \|_* \le \frac{\veps}{3}.
\end{ald}
\end{eq}
\end{corollary}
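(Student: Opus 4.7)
The plan is to replicate the structure of the proof of~\cref{cor:fgm-restart-bound}, substituting~\cref{th:devolder} with the composite inexact-oracle extension of~\cref{th:devolder-general} that was referenced just before the corollary's statement. Concretely, I will first invoke this extension for a single epoch: running~$T$ steps of FGM on~$f_{\lambda\sqrt{\Omega}}$ starting from~$z^{s-1}$, with~$f$ queried through its~$\delta$-inexact oracle and the composite term~$\lambda D_\omega(\cdot,z_0)$ absorbed into the prox-step, yields the bound $f_{\lambda\sqrt{\Omega}}(z^s)-f_{\lambda\sqrt{\Omega}}(z^*) \le 5L\Omega_{s-1}/T^2$, where~$\Omega_{s-1}$ denotes the~$\omega$-radius of a ball of radius~$R_{s-1} := \|z^{s-1}-z^*\|$ around~$z^{s-1}$. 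This requires the precondition~$\delta \le L\Omega_{s-1}/(2T^3)$, which is implied by~$\delta \le \delta_T$ since the radii~$R_{s-1}$ (and hence~$\Omega_{s-1}$) only shrink across epochs.

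Next, I will use that~$f_{\lambda\sqrt{\Omega}}$ is~$\lambda$-strongly convex with respect to~$\|\cdot\|$ (inherited from the~$1$-strong convexity of~$\omega$) to translate this composite-gap bound into the analogue of~\cref{eq:fgm-one-epoch-progress}: $\|z^s-z^*\|^2 \le 10L\Omega_{s-1}/(\lambda T^2)$. Applying the slow-growth property~\cref{eq:quadratic-growth} in the form~$\Omega_{s-1} \le \wt O_d(1) R_{s-1}^2$, together with the prescribed~$T \ge \sqrt{40\wt O_d(1) L/\lambda}$, then gives the distance-halving recursion~$R_s \le R_{s-1}/2$. Iterating over~$S$ epochs yields~$R_S \le 2^{-S} R_0$; combining the inequality~$D_\omega(z^*,z_0) \ge R_0^2/2$ (from the~$1$-strong convexity of~$\omega$) with~$D_\omega(z^*,z_0) \le \Omega$ (by the definition of the~$\omega$-radius) gives~$R_0 \le \sqrt{2\Omega}$. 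The stipulated~$S \ge \log_2(3L\sqrt{\Omega}/\varepsilon)$ then establishes the first bound in~\cref{eq:fgm-approximation-bounds-general}, up to an absolute constant.

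To derive the second bound, I will feed~$R_{S-1} \le 2\varepsilon/(3L)$ back into the one-epoch composite-gap bound and use~$T^2 \ge 40\wt O_d(1) \ge 40$ (which follows from~$L \ge \lambda$); the arithmetic is then identical to that in the proof of~\cref{cor:fgm-restart-bound} and delivers $f_{\lambda\sqrt{\Omega}}(z^S)-f_{\lambda\sqrt{\Omega}}(z^*) \le \varepsilon^2/(18L)$. The third bound follows immediately from the~$L$-Lipschitzness of~$\nabla f$ and the first bound:~$\|\nabla f(z^S)-\nabla f(z^*)\|_* \le L\|z^S-z^*\| \le \varepsilon/3$. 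The main subtlety I anticipate is checking that the proximal subproblem at each restart---which combines the re-centered~\dgf~$\omega_{z^{s-1}}$ with the anchored regularizer~$\lambda D_\omega(\cdot,z_0)$---remains tractable; this is where the compatibility of~$\omega$ (coordinate- or quasi-separability) is crucial, and once it is granted the argument tracks the Euclidean template of~\cref{cor:fgm-restart-bound} essentially verbatim.
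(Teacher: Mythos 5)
Your proposal follows exactly the same route as the paper's own (terse) argument: invoke the composite extension of~\cref{th:devolder-general} for a single epoch, convert the composite-gap bound to a distance bound via the $\lam$-strong convexity of~$f_{\lam\sqrt{\Omega}}$, apply the quadratic-growth condition~\cref{eq:quadratic-growth} to obtain the distance-halving recursion, iterate, and then derive the second and third bounds by the Euclidean template and smoothness respectively. Your explicit derivation of~$R_0 \le \sqrt{2\Omega}$ is a useful step the paper leaves implicit (and it does cost the benign factor~$\sqrt{2}$ you flag), and your observation about the tractability of the prox-step combining the re-centered~\dgf~$\omega_{z^{s-1}}$ with the $z_0$-anchored regularizer~$\lam D_{\omega}(\cdot,z_0)$ is a legitimate concern that the paper does not address.

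There is, however, one logical slip in the treatment of the inexactness tolerance. You claim that the per-epoch precondition~$\delta \le L\Omega_{s-1}/(2T^3)$ is ``implied by~$\delta \le \delta_T$ since the radii~$R_{s-1}$ (and hence~$\Omega_{s-1}$) only shrink across epochs.'' This implication runs backwards: if~$\Omega_{s-1}$ shrinks, the threshold~$L\Omega_{s-1}/(2T^3)$ also shrinks, so the per-epoch condition becomes \emph{harder}, not easier, to meet, and~$\delta \le \delta_T = L\Omega/(2T^3)$ (computed from the initial~$\Omega$) does not give~$\delta \le L\Omega_{s-1}/(2T^3)$ for~$s > 1$. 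In fairness, the paper's own proofs of~\cref{cor:fgm-restart-bound} and~\cref{cor:fgm-restart-bound-general} gloss over the same point: the one-epoch bound is re-applied at each restart with the current (smaller) radius, but the $\delta$-condition is never re-verified. A clean way to close this gap is to require~$\delta$ to satisfy the precondition for the \emph{final} (smallest) epoch, i.e.~something on the order of~$\delta \lesssim L\,\Omega[Z_{\veps/L}]/T^3$, which is essentially what the carefully chosen~$\delta$ in~\cref{eq:delta-value} enforces at the level of the full algorithm. Your write-up should state this requirement explicitly rather than appeal to the (false) monotonicity argument.
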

\begin{proof}
%We consider the case where~$L^2 R^2$ is used in~\eqref{eq:fgm-restart-params}; the other case is similar but uses the second bound of~\eqref{eq:fgm-one-epoch-progress}.
Note that, with given~$T$, we ensure that~$R_{s} := \|z^{s} - z^*\|$ satisfies
\[
R_s \stackrel{\eqref{eq:fgm-convergence-general}}{\le}  \sqrt{\frac{2}{\lam} \cdot \frac{5\wt L\Omega[Z_{R_{s-1}}(z^{s-1})]}{T^2}} \le \sqrt{\frac{10\wt O_d(1) LR_{s-1}^2}{\lam T^2}} \le \frac{R_{s-1}}{2}.
\]
Here the first transition relied on the fact that~$\omega$ is re-centered to~$z_{s-1}$ at~$s$-th epoch, and the second transition used the quadratic growth condition~\cref{eq:quadratic-growth}. 
This gives the first inequality in~\cref{eq:fgm-approximation-bounds-general}
The second inequality  can be verified as in the proof of~\cref{cor:fgm-restart-bound} (with~$\Omega$ replacing~$R^2$), and the last one follows by smoothness.
\end{proof}
We see that the first of the two issues with regularization is solved: we simply run~\cref{alg:fgm-restart} on~$f_{\veps}$.
Alas, the second issue is still present: while~$\nabla f(z^S)$ approximates~$\nabla f(z^*)$, where~$z^*$ minimizes~$f_\veps$, we cannot guarantee that~$\|\nabla f(z^*)\|_*$ is small: indeed,~$\|\nabla f(z^*)\|_* \le \veps$ is equivalent to
\begin{eq}
\label{eq:potential-gradient-bound}
\sup_{z \in Z_R} \|\nabla\omega(z)\|_*^2 \le \Omega,
\end{eq}
but this latter condition cannot be guaranteed from the compatibility properties of~$\omega$. %(note that strong convexity guarantees the inequality in {\em reverse direction})
In fact, in the constrained setup, where minimization has to be performed on a convex body~$Z \subset \R^d$,~\cref{eq:potential-gradient-bound} breaks for the important class of Legendre~\dgf's -- those with gradients diverging on the boundary of the feasible set~\cite{bauschke2016descent}.\footnotemark
\footnotetext{E.g., in the ``simplex'' setup, where the norm is~$\|\cdot\|_1$, and~\dgf~is the negative entropy~$h(z)  = \sum_{i \in [d]} z_i \log(z_i)$ on the probability simplex~$\Delta_d \subset \R^d$.
%; in fact, this problem can be recast as the one on the unit$~\ell_1$-ball, see, e.g.,~\cite{babichev2019efficient}. 
The appropriate modification of~\cref{eq:potential-gradient-bound},~$\sup_{z \in \Delta_d} \|\nabla h(z)\|_\infty^2 \le \log(d)$, cannot be valid  since the left-hand side is infinite.}
However, in the absence of constraints, or for non-Legendre potentials in the constrained case,~\cref{eq:potential-gradient-bound} can sometimes be guaranteed. Next we consider one such example relevant in practice.

\paragraph{Regularization with~$\|\cdot\|_p^2$}
Let the norm of interest be~$\|\cdot\|_1$ with the dual norm~$\|\cdot\|_{\infty}$. It is well-known (see, e.g.~\cite{nesterov2013first}) that, for any~$d \ge 3$, the function
\begin{eq}
\label{eq:lp2-dgf}
\omega(z) = \frac{C_{d}}{2} \|z\|_p^2 
\quad \text{with} \;\; p = 1 + \frac{1}{\log(d)} \;\; \text{and} \;\; C_d = \exp\left( \frac{\log d - 1}{\log d + 1}\right)
\end{eq}
is a compatible~\dgf~for~$\|\cdot\|_1$; in particular,~$\omega(z)$ is~$1$-strongly convex on~$\R^d$ with respect to~$\|\cdot\|_1$, and~$\Omega[Z_1] \le c\log(d)$ for some universal constant~$c$ (with a matching lower bound).
At the same time,~\cref{eq:potential-gradient-bound} can be easily verified: 
%\begin{eq}
%\label{eq:lp2-gradient}
$\nabla \omega(z) = C_d \|z\|_p^{2-p} z^{p-1},$
%\end{eq}
where the coordinates of~$z^{p-1} \in \R^d$ are the~$(p-1)$-th powers of the coordinates of~$z$ (with the signs preserved).
As a result,
%maximizing~$\|\nabla \omega(z)\|_{\infty}$ over~$Z_1 = \{z \in \R^d: \|z\|_1 \le 1\}$ we get
\small
\begin{eq}
\label{eq:lp2-grad-bound}
\begin{ald}
\sup_{\|z\|_1 \le 1} \|\nabla \omega(z)\|_{\infty} 
&= C_d \sup_{\|z\|_1 \le 1} \|z\|_p^{2-p} \| z \|_{\infty}^{p-1} 
\le C_d \sup_{\|z\|_1 \le 1} \|z\|_p
\le \sqrt{2cC_d\log(d)} 
%\le \sqrt{2ce\log(d)}, 
\end{ald}
\end{eq}
\normalsize
where we first used that~$\|z\|_{\infty} \le \|z\|_p$ and then used the bound~$\Omega[Z_1] \le c \log(d)$. 
%and finally used~$C_d \le e$. 
Thus,~\cref{eq:potential-gradient-bound} is verified, 
so~$\|\cdot\|_p^2$-regularization only perturbs the gradient up to~$O(\veps)$.

\subsection{Constrained case}
\label{sec:bregman-constrained}
We have just seen that in the unconstrained scenario, one can indeed efficiently approximate first-order stationary points of a convex function -- at least in the~$\ell_1$-geometry setup. Let us now demonstrate that this result can be extended to the constrained scenario.
Namely, we now incorporate into the problem a set~$Z \in \R^d$, assumed to be convex, compact, and ``prox-friendly'':
one must be able to efficiently compute the prox-mapping with respect to~$Z$, defined as
\begin{eq}
\label{eq:grad-map-general-constrained}
\prox_{z,Z,\omega}(\zeta) := \argmin_{z' \in Z} \left[ \lang \zeta, z'\rang + D_{\omega}(z',z) \right];
\end{eq}
note that this is satisfied when~$Z$ is a ``simple'' set such as~$\ell_p$-ball or a simplex.
Accordingly, we modify the \dgf~compatibility requirements, now only requiring strong convexity on~$Z$. 
It is known from~\cite{devolder2014first} that~\cref{th:devolder-general} extends almost word-for-word to this setting, with the prox-mapping~\cref{eq:grad-map-general} replaced with~\cref{eq:grad-map-general-constrained}, and~$\Omega[Z_R]$ replaced with~$\Omega = \Omega[Z]$.
Furthermore, the first two inequalities in~\cref{eq:fgm-approximation-bounds-general} are preserved, under the same premise~\cref{eq:fgm-restart-params-general}. 
Now, let us define the natural ($\omega$-adapted) stationarity measure~$\Dzw$ by
\small
\begin{align}
\label{def:our-measure-general}
\Dzw^2(z, \zeta, L) := 2L \max_{z' \in Z} \left[-\lang \zeta, z' - z \rang  - L D_{\omega}(z',z) \right],
\end{align}
\normalsize
%where~$\omega$ is the potential centered at~$z_0 = 0$ (assuming initialization in the origin w.l.o.g., just to simplify the notation).
We can easily verify that, under~\cref{eq:fgm-restart-params-general}, one has %with~$\lam = \veps / \sqrt{\Omega}$, one has
\small
\begin{eq}
\label{eq:fgm-near-stationarity-general-constrained}
\Dzw(z^{S}, \nabla f_{\lam \sqrt{\Omega}}(z^S), L+\lam) \le \frac{\veps}{3}\sqrt{\frac{L+\lam}{L}}.
\end{eq}
\normalsize
Indeed,  the argument mimics that in~\cref{eq:-fgm-near-stationarity-argument}: 
%the argument the proof of~\cref{cor:fgm-restart-bound} applies almost {\em verbatim}:
\small
\[
\begin{ald}
\Dzw^2(z^{S},\nabla f_{\lam \sqrt{\Omega}}(z^S), L+\lam) 
%&= 2(L+\lam)\max_{z \in Z} \left[-\lang \nabla f_{\lam \sqrt{\Omega}}(z^S), z - z^S \rang - (L+\lam) D_{\omega}(z,z^S) \right]\\ 
=-2(L+\lam) \min_{z \in Z} \big[\langle \nabla f_{\lam \sqrt{\Omega}}(z^S), z - z^S \rangle + (L+\lam) D_{\omega}(z,z^S) \big];
\end{ald}
\]
\normalsize
on the other hand, for any~$z \in Z$ one has
\small
\[
\small
\begin{aligned}
&f_{\lam \sqrt{\Omega}}(z) - f_{\lam \sqrt{\Omega}}(z^S)= f(z) - f(z^S) + \lam[D_{\omega}(z,z_0) - D_{\omega}(z^S,z_0)] \\
&\le \langle \nabla f(z^S), z-z^S \rangle + \frac{L}{2} \| z - z^S\|^2 + \lam[D_{\omega}(z,z_0) - D_{\omega}(z^S,z_0)] \\
&\le \langle \nabla f(z^S), z-z^S \rangle + L D_\omega(z,z^S) + \lam[D_{\omega}(z,z_0) - D_{\omega}(z^S,z_0)] \\
&= \langle \nabla f(z^S), z-z^S \rangle + L D_\omega(z,z^S) + \lam[D_{\omega}(z,z^S) + \langle \nabla \omega(z^S) - \nabla \omega(z_0), z - z^S\rangle] \\ 
&= \langle \nabla f_{\lam \sqrt{\Omega}}(z^S), z-z^S \rangle + (L+\lam) D_\omega(z,z^S),
\end{aligned}
\normalsize
\]
\normalsize
where we first used the smoothness of~$f$, then the~$1$-strong convexity of~$\omega$, and finally, the well-known three-point identity for the Bregman divergence~(see,~e.g.,~\cite[Eq.~(4.1)]{bubeck2014theory}).
Minimizing both sides over~$z \in Z$ and recalling that~$f_{\lam \sqrt{\Omega}}(z^S) - f_{\lam \sqrt{\Omega}}(z^*) \le \veps^2/(18L)$, we arrive at~\cref{eq:fgm-near-stationarity-general-constrained}. \qed
%i.e., we fully generalize~\cref{cor:fgm-restart-bound}. 

Applying~\eqref{cor:fgm-restart-bound-general} with~$\lambda = \veps/\sqrt{\Omega}$, i.e., to the regularized function~$f_\veps$, cf.~\cref{eq:regularizer-general}, we see that that one can obtain~$O(\veps)$-stationary point -- either in the sense of the dual gradient norm in the unconstrained case, or in the sense of~$\Dzw^2(\cdot,\cdot,\cdot)$ criterion -- in~$\wt O(\sqrt{LR/\veps})$ prox-mapping computations, by running appropriately generalized version of~\cref{alg:fgm-restart}. 
%Thus, we have extended the results of~\cref{sec:fgm-inexact} to the non-Euclidean setup. 

\begin{rem}
Using the optimality conditions in~\cref{def:our-measure-general}, one can verify that %~$\Dzw$ satisfies
\begin{eq}
\label{eq:our-measure-bregman-characterization}
\begin{ald}
\Dzw^2(z, \nabla f(z), L) 
&\ge 2L^2 D_{\omega}(z, \nabla\omega^*_Z[\nabla\omega(z) - \tfrac{1}{L} \nabla f(z)]) \\
&[= 2L^2 D_{\omega^*_Z} (\nabla \omega(z) - \tfrac{1}{L} \nabla f(z), \nabla \omega(z))]
\end{ald}
\end{eq}
with equality in the unconstrained case. Here,~$\omega^*_Z$ is the Fenchel dual of~$\omega$ on~$Z$, i.e.,
\[
\omega_Z^*(\zeta) := \max_{z \in Z} \left[ \lang \zeta, z \rang - \omega(z)\right], \quad \forall \zeta \in \R^d,
\] 
and~$\nabla\omega^*_Z[\nabla\omega(z) - \tfrac{1}{L} \nabla f(z)]$ is the mirror descent update from~$z$. 
The second representation in~\cref{eq:our-measure-bregman-characterization} is by the standard properties of the Bregman divergences (\cite{rockafellar2015convex}).
From it, noting that~$\nabla \omega^*_Z$ is~$1$-Lipschitz with respect to~$\|\cdot\|_*$, we conclude that~$\Dzw(z, \nabla f(z), L)$ {\em under-estimates} the dual gradient norm~$\|\nabla f(z) \|_*$ in the unconstrained setup, this estimate only being tight in the Euclidean case, i.e., when~$\omega(z) = \tfrac{1}{2}\|z\|_2^2$. 
On the other hand, from the first representation we see that~$\Dzw(z, \nabla f(z), L)$ {\em over-estimates} the proximal gradient norm measure~$\Ezw(z, \nabla f(z), L)$ defined by
\[
\Ezw^2(z, \nabla f(z), L) = L^2 \left\|z - \nabla \omega^*_Z[\nabla\omega(z) - \tfrac{1}{L} \nabla f(z)] \right\|^2.
\]
Thus,~$\Dzw$ corresponds to a stronger criterion than~$\Ezw$ in the constrained case; in the unconstrained case the two measures coincide, and the resulting criterion is {\em weaker} than the gradient norm one (unless the norm is Euclidean).
\end{rem}

Summarizing the results of this section, we see that one of the two key ``computational primitives'' in our framework -- the search of a near-stationary point of a smooth and concave function -- extends to the~$\ell_1$-geometry with distance-generating function given by~\cref{eq:lp2-dgf}, where the accuracy can be measured by the~$\Dzw$ measure~(cf.~\cref{def:our-measure-general}) or by the dual gradient norm in the unconstrained case. 
Thus, we have extended the results of~\cref{sec:fgm-inexact}. 
Our next goal is to similarly extend the results of~\cref{sec:ppm-via-fgm}, i.e., to implement the non-Euclidean proximal point algorithm with inexact iterations. 

\subsection{Bregman proximal point algorithm}
\label{sec:bregman-prox-point}
%We now focus on the problem corresponding to the second ``building block'' of Algorithm~\ref{alg:fsp}. 
Given a function~$\phi: X \to \R$ with~$L$-Lipschitz gradient with respect to the norm~$\|\cdot\|$, where~$X \subseteq \R^d$ is convex and ``prox-friendly'' with respect to a compatible with~$\|\cdot\|$ \dgf~$\omega$, the goal is to find a point~$\wh x \in X$ such that~$\Dxw(\wh x, \nabla \phi(\wh x), L) \le \veps$.
%, or such that~$\|\nabla \phi(\wh x)\|_* \le \veps$ in the unconstrained case. 
As in~\cref{sec:ppm-via-fgm}, we will achieve this result via proximal point updates implemented using~\cref{alg:fgm-restart}. 
First, we define the Bregman proximal point operator following~\cite{nemirovski2004prox}:
\begin{eq}
\label{def:prox-map-bregman}
%\Prox_{\gamma \phi}(x) :=
x \mapsto x^+_{\gam\phi, X, \omega}(x) := \argmin_{x' \in X} \left[ \phi (x') + \tfrac{1}{\gam} D_{\omega}(x',x) \right];
\end{eq}
note that the objective in~\cref{def:prox-map-bregman} is~$1/\gamma$-strongly convex with respect to~$\|\cdot\|$. 
We denote~$x^+ := x^+_{\gam\phi, X, \omega}(x)$ for brevity, and fix~$\gam = \frac{1}{2L}$.
The optimality condition reads
\begin{eq}
\label{eq:ppm-foo-bregman}
\lang \tfrac{1}{2L}\nabla\phi(x^+) + \nabla \omega(x^+) - \nabla \omega(x), x'-x^+ \rang \ge 0, \quad \forall x' \in X.
\end{eq}
%which reduces to~$\tfrac{1}{2L}\nabla\phi(x^+) + \nabla \omega(x^+) - \omega(x) = 0$ in the unconstrained case.
%\begin{eq}
%\label{eq:prox-implicit}
%x^{+} = x - \gam \nabla \phi(x^{+}).
%% \lang x^{+} - x + \gam \nabla \phi(x^{+}), x' - x^+ \rang \ge 0, \quad \forall x' \in X,
%\end{eq}
Following~\cref{sec:ppm-via-fgm}, we first analyze the exact updates
$
%\begin{eq}
%\label{eq:prox-update-bregman}
x_{t} = x^+_{\phi/(2L), X, \omega}(x_{t-1}).
%\end{eq}
$
By~\cref{def:prox-map-bregman} we have %and strong convexity of~$\omega$, we have
$
\phi(x_{t-1}) \ge \phi(x_t) + 2L D_\omega(x_{t},x_{t-1}) % \ge \phi(x_t) + L\|x_t - x_{t-1}\|^2
$
%for any~$t \in [T]$, 
which allows to mimic~\cref{eq:prox-point-argument}:
\begin{eq}
\label{eq:prox-point-argument-bregman-min}
%\min_{t \in [T]} \|x_t - x_{t-1}\|^2 \le \frac{1}{T} \sum_{t \in [T]} \| x_t - x_{t-1} \|^2 \le \frac{\Gap}{LT}.
\min_{t \in [T]} \|x_t - x_{t-1} \|^2 \le 2D_{\omega}(x_t, x_{t-1}) \le \frac{2}{T} \sum_{t \in [T]} D_{\omega}(x_t, x_{t-1}) \le \frac{\Gap}{LT}.
\end{eq}
On the other hand, we can bound the stationarity measure proceeding as in~\cref{eq:stationarity-by-closeness}:
\small
\begin{eq}
\label{eq:prox-point-argument-bregman-chain}
\hspace{-0.2cm}
\begin{ald}
\Dxw^2(x^+,\nabla\phi(x^+), L) 
& \equiv 2L \max_{x' \in X} \left[-\lang \nabla \phi(x^+), x' - x^+ \rang - L D_{\omega}(x', x^+)\right] \\
&\le 2L^2 \max_{x' \in X} \left[ 2\lang \nabla \omega(x^+) - \nabla\omega(x), x' - x^+ \rang - D_{\omega}(x', x^+) \right] \\
&\le 2L^2 \max_{x' \in X} \left[ 2\| \nabla \omega(x^+) - \nabla\omega(x)\|_*^2 + \tfrac{1}{2}\| x' - x^+\|^2 - D_{\omega}(x', x^+) \right] \\
&\le 4L^2 \| \nabla \omega(x^+) - \nabla\omega(x)\|_*^2,
\end{ald}
\end{eq}
\normalsize
where we first used Young's inequality and then the strong convexity of~$\omega$. 
Note that in the unconstrained case, and with~$\Dxw^2(x^+,\nabla\phi(x^+), L)$ replaced by~$\|\nabla f(x^+)\|_*^2$, the bound~\cref{eq:prox-point-argument-bregman-chain} becomes an equality; on the other hand, we have not been able to find a tighter bound for~$\Exw^2(x^+,\nabla\phi(x^+), L)$; all this indicates that~\cref{eq:prox-point-argument-bregman-chain} is likely unimprovable in general.
Now, the inequalities~\cref{eq:prox-point-argument-bregman-min} and~\cref{eq:prox-point-argument-bregman-chain}, when combined together, imply that, in order to proceed as in the Euclidean case, one must require that the~\dgf~$\omega$ is \textit{smooth} on~$X$ with respect to~$\| \cdot \|$, i.e., for some~$\ell_{X,\omega} \ge 1$ one has
\begin{eq}
\label{eq:dgf-smoothness}
\| \nabla \omega(x'') - \nabla\omega(x')\|_* \le \ell_{X,\omega} \|x'' - x' \|, \quad \forall x',x'' \in X.
\end{eq}
Indeed, when combined with~\cref{eq:prox-point-argument-bregman-min}--\cref{eq:prox-point-argument-bregman-chain}, this implies, after~$T$ exact updates, that %~\cref{eq:prox-update-bregman},
\[
\min_{t \in [T]} \Dxw(x_t,\nabla\phi(x_t), L) \le 2\ell_{X,\omega} \sqrt{\frac{L\Gap}{T}},
\]
i.e., the same convergence rate as in the Euclidean case (up to the extra factor~$\ell_{X,\omega}$). 
Moreover, as in the Euclidean case, this argument preserves ``robustness'' to errors in~\cref{def:prox-map-bregman}.
Indeed, denoting~$\phi_{L,x,\omega}(\cdot)$ the objective in~\cref{def:prox-map-bregman}, assume that~$\wt x^{+}$ satisfies 
\[
\phi_{L,x,\omega}(\wt x^+) \le \phi_{L,x,\omega}(x^+) + \frac{\veps^2}{24L} 
\quad \text{and} \quad
\Dxw(\wt x^+, \nabla \phi_{L,x,\omega}(\wt x^+), L+\lam) \le \frac{\veps}{2},
\]
cf.~\cref{eq:ppm-objective-approx}--\cref{eq:ppm-stationarity-approx}. 
As we know from the results of~\cref{sec:bregman-constrained}, this can be guaranteed by running~\cref{alg:fgm-restart} on~$\phi_{L,x,\omega}$ with appropriately chosen parameter values. On the other hand, the sequence~$\wt x_t = \wt x^+_{\phi/2L,X,\omega}(\wt x_{t-1})$ satisfies the counterpart of~\cref{eq:prox-point-argument-bregman-min}:
\[
\min_{t \in [T]} \|\wt x_t - \wt x_{t-1} \|^2 \le 2D_{\omega}(\wt x_t, \wt x_{t-1}) \le \frac{2}{T} \sum_{t \in [T]} D_{\omega}(\wt x_t, \wt x_{t-1}) \le \frac{\Gap}{LT} + \frac{\veps^2}{24L^2},
\]
and that of~\cref{eq:prox-point-argument-bregman-chain}:
\[
%\begin{eq}
\begin{ald}
&\Dxw^2(\wt x^+, \nabla \phi(\wt x^+), 2L+\lam) \\
%&\equiv 2(2L+\lam) \max_{x' \in X} \left[- \lang \nabla \phi(\wt x^+), x' - \wt x^+ \rang - (2L+\lam) D_{\omega}(x',x) \right] \\
\le  & \; 2(2L+\lam) \max_{x' \in X} \left[- \lang \nabla \phi_{L,x,\omega}(\wt x^+), x' - \wt x^+ \rang - (L+\lam) D _{\omega}(x',x) \right] \\
&\quad\quad\quad+ 2L(2L+\lam) \max_{x' \in X} \left[ 2\lang \nabla\omega(\wt x^+) - \nabla\omega(x), x' - \wt x^+ \rang - D_{\omega}(x',x) \right]\\
\le & \; 2 \Dxw^2(\wt x^+, \nabla \phi_{L,x}(\wt x^+), L+\lam) + 8L(L+\lam) \|\nabla\omega(\wt x^+) - \nabla\omega(x)\|_*^2 \\
\le &\; {\veps^2}/{2} + 8L(L+\lam) \|\nabla\omega(\wt x^+) - \nabla\omega(x)\|_*^2,
\end{ald}
%\label{eq:stationarity-by-closeness-approx-bregman}
%\end{eq} 
\]
where we applied Young's inequality and strong convexity.
This allows to mimic~\cref{eq:stationarity-guarantee-approx}:
\[
%\begin{eq}
\begin{ald}
\min_{t \in [T]} \Dx(\wt x_t,\nabla \phi(\wt x_t), L)  
&\le \ell_{X,\omega} \sqrt{\frac{8(L+\lam)\Gap}{T} + \frac{5\veps^2}{6}\left(\frac{L+\lam}{L}\right)^2} 
%&\le \ell_{X,\omega} \left[ 3\sqrt{\frac{(L+\lam)\Gap}{T}} + \frac{L+\lam}{L} \veps \right].
\end{ald}
%\end{eq}
\]
Taking~$\lam = L$, we arrive at the desired complexity estimate~$O(L\Gap/\veps^2)$. 

\subsection{On restrictiveness of \dgf~smoothness} 
While trivially satisfied in the Euclidean case with~$\ell_{X,\omega} = 1$ for any~$X$,  the smoothness assumption~\cref{eq:dgf-smoothness}  -- which is also made, e.g., in~\cite{dang2015stochastic,zhao2020primal} -- is strong in the general Bregman scenario. 
For example, the negative entropy~$h(x) = \sum_{i \in [d]} x_i \log(x_i)$, perhaps the most common choice of a non-Euclidean~\dgf, is {\em not} smooth on its domain, the probability simplex~$\Delta_d$. \footnotemark
\footnotetext{In fact, it is easy to see that no Legendre~\dgf, i.e., such that~$\|\nabla \omega(x)\|_* \to \infty$ when~$x \to \partial\,\textup{dom}(\omega)$, can satisfy~\cref{eq:dgf-smoothness} with~$X = \textup{dom}(\omega)$ for any finite~$\ell_{X,\omega}$.}
Likewise, the previously considered~$\|\cdot\|_p^2$-function (cf.~\cref{eq:lp2-dgf}) does not satisfy~\cref{eq:dgf-smoothness} with respect to~$\|\cdot\|_1$ on the set~$X = \{x \in \R^d: \|x\|_1 \le r\}$ for any finite~$\ell_{X,\omega}$, however small is~$r > 0$, unless when~$d = 1$.\footnotemark\;
%One non-trivial example with finite~$\ell_{X,\omega}$ we were able to come up with is as follows: 
More generally, as implied by~\cite[Theorem 3.5 and its omitted dual version]{borwein2009uniformly} in combination with~\cite[p.~469]{ball1994sharp}, there is no function simultaneously strongly convex and smooth, with dimension-independent condition number, with respect~$\ell_p$-norm for~$p \ne 2$. 
Still, it would be interesting to exhibit~$(X,\|\cdot\|, \omega)$ with~$\|\cdot\|$-compatible~$\omega$ for which~\cref{eq:dgf-smoothness} holds with {\em moderate}~$\ell_{X,\omega}$. 
\footnotetext{Indeed, by explicitly computing the Hessian~$\nabla^2 \omega(x)$ defined almost everywhere on~$\R^d$, we observe that its first diagonal element ``explodes'':~$[\nabla^2 \omega(x)]_{11} \to \infty$ when~$x = u e_1 + (1-u) e_2$ with~$u \to 0$.}

Alternatively, we may consider circumventing~\cref{eq:dgf-smoothness} by discarding Bregman divergences and working directly with the norm. 
 %We also note that an alternative approach is possible. 
Indeed, using conjugacy of~$\half\|\cdot\|^2$ and~$\half\|\cdot\|_*^2$ one can derive the~$O(L\Gap/\veps^2)$ convergence rate for minimizing the gradient norm up to~$\veps$ by {\em steepest descent} with respect to the norm~$\|\cdot\|$, i.e., replacing the Bregman divergence in~\eqref{eq:grad-map-general} by~$\half\|\cdot\|^2$. The resulting prox-mapping is tractable whenever~$\|\cdot\|^2$ is a ``simple'' function, which is the case, e.g., for~$\|\cdot\| = \|\cdot\|_p$ with~$p \ge 1$. 
 Likewise, the proximal point operator, when adjusted in this manner, remains tractable as~$\|\cdot\|_p^2$ is~$O(1)$-strongly convex with respect to~$\|\cdot\|_p$ when~$1 < p \le 2$. Thus, the results of~\cref{sec:bregman-prox-point} extend to such~$\|\cdot\|^2$-regularized proximal point algorithm.

\end{document}